\newcommand{\llangle}{\langle\hspace{-2.5pt}\langle}
\newcommand{\rrangle}{\rangle\hspace{-2.5pt}\rangle}
\newtheorem{theorem}{Theorem}
\newtheorem{corollary}[theorem]{Corollary}
\newtheorem{lemma}[theorem]{Lemma}
\newtheorem{proposition}[theorem]{Proposition}
\theoremstyle{definition}
\newtheorem{example}[theorem]{Example}
\newtheorem{definition}[theorem]{Definition}
\title{Injectivity, stability, and positive definiteness\\of max filtering}
\author{Dustin~G.~Mixon\footnote{Department of Mathematics, The Ohio State University, Columbus, OH} \footnote{Translational Data Analytics Institute, The Ohio State University, Columbus, OH}
\quad
Yousef~Qaddura\footnotemark[1]
}
\date{}
\begin{document}
\maketitle

\begin{abstract}
Given a real inner product space $V$ and a group $G$ of linear isometries, max filtering offers a rich class of $G$-invariant maps.
In this paper, we identify nearly sharp conditions under which these maps injectively embed the orbit space $V/G$ into Euclidean space, and when $G$ is finite, we estimate the map's distortion of the quotient metric.
We also characterize when max filtering is a positive definite kernel.
\end{abstract}

\section{Introduction}

Many machine learning algorithms are designed for Euclidean data, and so it is convenient to represent objects as vectors in a real inner product space $V$.
In many cases, the naive vector representation of an object is not unique.
For example, one might represent a point cloud of $n$ points in $\mathbb{R}^d$ as a member $x$ of $V:=\mathbb{R}^{d\times n}$, but the same point cloud can also be represented by any permutation of the columns of $x$.
Observe that this ambiguity results from a subgroup $G$ of $\operatorname{O}(V)$, namely, permutations of columns.
If ignored, this ambiguity will render the machine learning process extraordinarily inefficient, requiring a much larger training set than necessary.
In practice, one might account for such ambiguities by augmenting the training set with the entire $G$-orbit of each example~\cite{SimardSP:03,CiresanMGS:10,KrizhevskySH:12,ChenDL:20}.
When $G$ is large, this makes the machine learning process much more computationally intensive than necessary.

As an alternative, one might factor out the ambiguity by identifying objects with members $[x]:=G\cdot x$ of the orbit space $V/G$.
This orbit space is endowed with the quotient metric
\[
d([x],[y]):=\inf_{\substack{p\in[x]\\q\in[y]}}\|x-y\|
\]
provided the orbits of $G$ are closed.
In order to make use of the vast array of Euclidean-based machine learning algorithms, we are inclined to embed the orbit space into Euclidean space while simultaneously minimizing the resulting distortion of the quotient metric.

\begin{definition}

\textcolor{black}{Given a map $f\colon V/G\to \mathbb R^n$, we may take $\alpha,\beta\in[0,\infty]$ to be the largest and smallest constants (respectively) such that}
\[\alpha \cdot d([x],[y]) \leq \|f([x])-f([y])\|\leq \beta\cdot d([x],[y]) \qquad \forall x,y\in V.\]
\textcolor{black}{Then, the \textbf{distortion} of $f$ is given by $\kappa(f):= \frac{\beta}{\alpha}$, where we take $\kappa(f)=\infty$ if $\alpha=0$ or $\beta = \infty$.}

\end{definition}

We call $\alpha$ and $\beta$ the \textbf{optimal lower and upper Lipschitz bounds} respectively. Section~2 of~\cite{CahillIM:24} shows how minimizing distortion is particularly useful in the context of transferring Euclidean data science algorithms (e.g., nearest neighbors, clustering and multidimensional scaling algorithms) to arbitrary metric spaces. To illustrate, we adapt an example to the context of nearest neighbor search over orbit spaces.

\begin{example}[Example~1 in~\cite{CahillIM:24}] 
  \label{ex.bilip nearest}
  Given $\lambda\geq 1$ and data $x_1,\dots,x_m\in V/G$, the \textbf{$\lambda$-approximate nearest neighbor problem} takes as input $[x]\in V/G$ and outputs $j\in \{1,\dots,m\}$ such that
  \[d([x],[x_j])\leq \lambda\cdot \min_{1\leq i\leq m} d([x],[x_i]).\]
  Given a map $f\colon V/G\to \mathbb R^n$ with distortion $c$ and a black box algorithm that solves the $\lambda$-approximate nearest neighbor problem in $\mathbb R^n$, one may transfer the algorithm to $V/G$ by pulling back through $f$. This results in a solution of the $c\lambda$-approximate nearest neighbor problem in $V/G$. To see this, first use the black box algorithm to find $j\in \{1,\dots,m\}$ such that
  \[\|f([x])-f([x_j])\|\leq \lambda\cdot \min_{1\leq i\leq m} \|f([x])-f([x_i])\|.\]
  Then 
  \[d([x],[x_j])\leq\frac{1}{\alpha}\cdot \|f([x])-f([x_j])\|
  \leq\frac{\lambda}{\alpha}\cdot\min_{i\in I}\|f([x])-f([x_i])\|
  \leq c\lambda\cdot\min_{i\in I} d([x],[x_i]).
  \]
\end{example}

As such, we seek bilipschitz embeddings of orbit spaces into Euclidean spaces. To this end, \cite{CahillIMP:22} recently introduced a family of embeddings called \textit{max filter banks} that enjoy explicit bilipschitz bounds whenever $G$ is finite.

\begin{definition}
Consider any real inner product space $V$ and $G\leq\operatorname{O}(V)$.
\begin{itemize}
\item[(a)]
The \textbf{max filtering map} $\llangle\cdot,\cdot\rrangle\colon V/G\times V/G\to\mathbb{R}$ is defined by
\[
\llangle [x],[y]\rrangle
:=\sup_{\substack{p\in[x]\\q\in[y]}}\langle p,q\rangle.
\]
\item[(b)]
Given \textbf{templates} $z_1,\ldots,z_n\in V$, the corresponding \textbf{max filter bank} $\Phi\colon V/G\to\mathbb{R}^n$ is defined by
\[
\Phi([x])
:=\{\llangle [z_i],[x]\rrangle\}_{i=1}^n.
\]
\end{itemize}
\end{definition}

\textcolor{black}{In loose terms, an individual max filter map $\llangle [z_i],\cdot \rrangle$ is a scalar feature map which takes as input $[x]\in V/G$ and measures the maximal alignment between the orbits $[x]$ and $[z_i]$ when viewed as subsets of $V$. A max filter bank is then a collection of such feature maps. While we focus on estimating their distortions, we give two crucial remarks on the practical aspects of max filter banks.}

\textcolor{black}{Firstly, by virtue of the polarization identity (Lemma~2(f) in~\cite{CahillIMP:22}), we have
\[d^2([x],[y]) = \|x\|^2 + \|y\|^2 - 2\llangle [x],[y]\rrangle.\]
Thus, the max filtering map and the quotient distance have the same computational complexity. Fortunately, in Section~3.2 of~\cite{CahillIMP:22}, the max filtering map was shown to be easy to compute for many important instances of $G$. For example:}
\begin{example}[Section~3.2.2 in~\cite{CahillIMP:22}] When $V\cong \mathbb R^n$ and $G\cong \mathbb Z/n\mathbb Z$ is the group of circular shifts, the max filter map is given in terms of a convolution operation:
\[\llangle [f],[g]\rrangle = \max_{a\in Z/n\mathbb Z} \sum_{x\in \mathbb Z/n\mathbb Z} f(x)g(x-a),\]
which can be computed in linearithmic time with the aid of the fast Fourier transform.
\end{example}

\textcolor{black}{Second, from a machine learning perspective, we note that the max filter map $\llangle [x],\cdot\rrangle\colon V\to \mathbb R$ is convex with an explicit subgradient (Section~5.3 in~\cite{CahillIMP:22}), and so one may include a max filter bank as a $G$-invariant layer in a classification model where constituent templates are trainable parameters. We refer the reader to Section~6 in~\cite{CahillIMP:22} for relevant numerical examples.}

\textcolor{black}{In this paper, we estimate the bilipschitz bounds and distortion of max filter banks. We identify conditions under which max filter banks are injective and bilipschitz, and we characterize when the max filtering map is positive definite.}

\subsection{Outline and relevance of results}

The following is a summary of our results and their relevance in comparison with previous literature.

\begin{itemize}

\item In Section~\ref{sec.injectivity}, we show that for every $G\leq\operatorname{O}(d)$ with closed orbits, $n\geq 2d$ generic templates suffice for the corresponding max filter bank to be injective (Theorem~\ref{thm.injectivity}). This improves over Corollary~13 in~\cite{CahillIMP:22}, which is the same result, but requires $G$ to be finite. This is relevant to the study of bilipschitz maps since they are necessarily injective by virtue of $\alpha>0$.

\item In Section~\ref{sec.upper stability}, we estimate the upper Lipschitz bound for every $G\leq\operatorname{O}(d)$ with closed orbits (Theorems~\ref{thm.upper lipschitz bound finite} and \ref{thm.upper lipschitz bound}). The estimates are relevant in two ways. They are optimal when $G$ is finite or $-\operatorname{id}\in G$, and they depend only on the templates $z_1,\dots,z_n\in \mathbb R^d$ along with their orbits.

\item Section~\ref{sec.lower stability} has two subsections. In the first subsection, we derive a lower Lipschitz bound $\alpha$ for all finite groups $G$ by leveraging the geometry of the Voronoi cell decomposition afforded by $G$ (Theorem~\ref{thm.lower lipschitz bound}). Notably, the bound is optimal for many examples of $G$ (Lemma~\ref{lem.lower lipschitz optimal cases}). In the second subsection, we introduce a quantity $\chi(G)$ called the Voronoi characteristic of $G$ (Definition~\ref{def.voronoi characteristic}) satisfying $\chi(G)\leq |G|$, and we use it to derive a more theoretically accessible lower Lipschitz bound $\tilde \alpha$ (Corollary~\ref{cor.weaker bound}). We show that $n\geq \chi\cdot (d-1) +1 $ generic templates suffice for max filter banks to be bilipschitz (Theorem~\ref{thm.generic bilipschitz}). Lastly, we evaluate our estimates on random templates with standard Gaussian entries. We obtain a distortion estimate when $G$ is finite (Theorem~\ref{thm.distortion from random templates}). We shall find that for any finite $G\leq \operatorname{O}(d)$ of order $m$ and any $\epsilon>0$, there exists $C_1 > 0$ such that for $z_1,\dots, z_n\in \mathbb R^d$ with $\mathrm{iid}$ standard Gaussian entries, it holds that
\[\kappa(\Phi)\leq C_1\chi^{3/2+\epsilon}m^{1+\epsilon}
\leq C_1m^{5/2+2\epsilon}
\]
with high probability in $n$. Here, $\Phi\colon\mathbb{R}^d/G\to\mathbb{R}^n$ denotes the max filter bank defined by $\Phi([x]):=\{\llangle [z_i],[x]\rrangle\}_{i=1}^n$ and $\chi\leq m$ denotes the Voronoi characteristic of $G$ (Definition~\ref{def.voronoi characteristic}). This is relevant in two ways. First, the bound is sharp up to $O(m^{\epsilon})$ factors when $G\leq \operatorname{O}(2)$ is the dihedral group (see Theorem~7(d) in~\cite{MixonP:22}). Second, our results improve over Theorem~18 in~\cite{CahillIMP:22}, which reports suboptimal bilipschitz bounds for templates drawn uniformly from the unit sphere. More specifically, by optimizing their analysis, one obtains the bound
\[
\kappa(\Phi)
\leq C_0m^3d^{1/2}(d\log d+d\log m+\log^2m)^{1/2},
\]
which depends on $d$ and has suboptimal exponent over $m$.

\item In Section~\ref{sec.pos def}, we characterize the compact groups $G\leq\operatorname{O}(d)$ for which the max filtering map $\llangle\cdot,\cdot\rrangle$ is a positive definite kernel (Theorem~\ref{thm.max filtering pos def}). This takes place if and only if the orbit space of $G$ is isometric to a subset of Euclidean space, i.e.,\ there exists an embedding $f\colon V/G\to \mathbb R^n$ with $\kappa(f)=1$. Note that this is the best possible distortion one may ask for. As such, all other orbit spaces must incur nontrivial distortion when embedded into Euclidean space. We shall find that positive definitness of the max filtering map renders it equivalent to a max filtering map with an appropriate finite reflection group. We refer the reader to~\cite{MixonP:22} for a deeper distortion analysis in the setting of general finite reflection groups and to Section~5.4 and Example~32 in~\cite{CahillIMP:22} for a numerical analysis and application in the setting of permutation groups.

\end{itemize}

\section{Injectivity}
\label{sec.injectivity}

In this section, we show that $2d$ generic templates suffice for any max filter bank to be injective.
More precisely, what follows is the main result of this section:

\begin{theorem}
\label{thm.injectivity}
Suppose $G\leq\operatorname{O}(d)$.
\begin{itemize}
\item[(a)]
If any orbit of $G$ is not closed, then every continuous map from $\mathbb{R}^d/G$ to any $T_1$ space is not injective.
\item[(b)]
If the orbits of $G$ are concentric spheres in $\mathbb{R}^d$, then for any nonzero $z\in\mathbb{R}^d$, the max filter $[x]\mapsto\llangle [z],[x]\rrangle$ is injective.
\item[(c)]
If the orbits of $G$ are closed, then for generic $z_1,\ldots,z_n\in\mathbb{R}^d$, the max filter bank $[x]\mapsto\{\llangle [z_i],[x]\rrangle\}_{i=1}^n$ is injective provided $n\geq 2d$.
\end{itemize}
\end{theorem}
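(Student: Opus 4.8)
The plan is to prove the three parts more or less independently, with part (c) being the heart of the matter. For part (a), suppose some orbit $[x_0]$ fails to be closed. Then there is a point $y_0\in\overline{[x_0]}\setminus[x_0]$, and since $\operatorname{O}(d)$-orbits of a bounded set are bounded with $[y_0]\subseteq\overline{[x_0]}$ as well, one checks $[x_0]$ and $[y_0]$ are distinct points of $\mathbb{R}^d/G$ that are not separated by the quotient topology — every open set containing $[x_0]$ contains $[y_0]$. If $f\colon\mathbb{R}^d/G\to Z$ is continuous and injective into a $T_1$ space, then $f([x_0])\neq f([y_0])$, so there is an open $U\ni f([x_0])$ with $f([y_0])\notin U$ (using $T_1$ to separate the single point $f([y_0])$), and $f^{-1}(U)$ is an open set containing $[x_0]$ but not $[y_0]$, a contradiction. (I would double-check the precise topological statement; possibly one needs that $[x_0]$ is not closed $\iff$ $\overline{[x_0]}$ strictly contains it, and handle the correspondence between points of the orbit space and orbits carefully.)

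For part (b), if all orbits are concentric spheres, then $\|x\|$ is a complete invariant: $[x]=[y]\iff\|x\|=\|y\|$. Given nonzero $z$, I claim $\llangle[z],[x]\rrangle=\|z\|\,\|x\|$. Indeed $\langle p,q\rangle\le\|p\|\|q\|=\|z\|\|x\|$ always, and equality is achieved: since the orbit of $x$ is the full sphere of radius $\|x\|$, it contains the vector $\|x\|\,z/\|z\|$ (assuming $x\neq 0$; the case $x=0$ is trivial), which is a positive multiple of $z$. Hence the max filter equals $\|z\|\|x\|$, a strictly monotone function of $\|x\|=d([0],[x])$, so it separates orbits. (I should confirm that "orbits are concentric spheres" is meant to include the degenerate orbit $\{0\}$ and that the claim "orbit of $x$ is the whole sphere of radius $\|x\|$" is exactly the hypothesis.)

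Part (c) is where the real work lies. The statement is that the bad set of template tuples $(z_1,\dots,z_n)\in(\mathbb{R}^d)^n$ — those for which $\Phi$ fails to be injective — is contained in a set of measure zero (or is not dense, depending on the intended meaning of "generic"; I expect "generic" here means "outside a set of measure zero," or possibly "outside a countable union of proper real-algebraic subvarieties"). The approach: $\Phi$ fails to be injective iff there exist $[x]\neq[y]$ with $\llangle[z_i],[x]\rrangle=\llangle[z_i],[y]\rrangle$ for all $i$. Fix such a pair $[x]\neq[y]$. Since each max filter $\llangle[z_i],[x]\rrangle=\sup_{g\in G}\langle z_i,gx\rangle$ is a supremum of linear functionals in $z_i$ (for $x$ fixed), it is a piecewise-linear convex function of $z_i$, and the relation $\llangle[z_i],[x]\rrangle=\llangle[z_i],[y]\rrangle$ says $z_i$ lies in the locus where two convex piecewise-linear functions agree. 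The key lemma I would aim for: for fixed $[x]\neq[y]$, the set $\{z\in\mathbb{R}^d:\llangle[z],[x]\rrangle=\llangle[z],[y]\rrangle\}$ is contained in a finite union of hyperplanes (away from a lower-dimensional set), so it has dimension at most $d-1$; therefore the set of single templates failing to separate $[x]$ from $[y]$ has measure zero, and moreover requiring $n\geq 2d$ independent templates, all lying in this bad set, is a strong constraint once we also let $[x],[y]$ vary. To make the union over all pairs $([x],[y])$ manageable, I would pass to an algebraic/semialgebraic description: since $G\leq\operatorname{O}(d)$ acts by isometries and orbits are closed, one can stratify the orbit space, reduce to considering $x,y$ in a fixed stratum, and use a dimension count — the space of "obstruction data" $(x,y,$ active group elements$)$ has some dimension $D$, and each obstruction constrains the $n$ templates to lie in an affine-algebraic set; for $n\geq 2d$ the total dimension of bad configurations drops below $nd$. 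The main obstacle is handling the non-finiteness of $G$: when $G$ is infinite the supremum need not be attained by finitely many group elements in a uniform way, so the "finite union of hyperplanes" claim must be replaced by a semialgebraic or stratification argument, and one must control how the relevant faces of the convex functions vary with $(x,y)$. I expect this is exactly where the improvement over Corollary~13 of~\cite{CahillIMP:22} (which assumed $G$ finite) requires genuinely new input — likely an appeal to the semialgebraic structure of $\mathbb{R}^d/G$ (via a result on orbit spaces of compact, or at least closed-orbit, linear group actions) together with a parametric transversality or generic-fiber-dimension argument.
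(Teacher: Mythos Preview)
Your arguments for (a) and (b) are correct and essentially match the paper's proofs.

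For (c), you have correctly identified the overall architecture: reduce to showing that for each fixed pair $[x]\neq[y]$ the set $S([x],[y]):=\{z:\llangle[z],[x]\rrangle=\llangle[z],[y]\rrangle\}$ has dimension at most $d-1$ (this is exactly the paper's ``$1$-strongly separating'' property), and then feed that into a semialgebraic dimension count (the paper invokes Theorem~12 of \cite{CahillIMP:22}, stated here as Proposition~\ref{prop.semialgebraic injective}). You also correctly flag that passing to a semialgebraic setting is needed; the paper does this by replacing $G$ with its closure $\overline{G}$ and invoking that closed subgroups of $\operatorname{O}(d)$ are algebraic (Proposition~\ref{prop.closed subgroups}).

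The genuine gap is in your plan for the key lemma itself. Your proposed mechanism---that the two max filters are piecewise-linear convex functions whose agreement locus is a finite union of hyperplanes---is exactly what breaks for infinite $G$, as you yourself note, and ``stratification'' or ``parametric transversality'' does not by itself supply the missing idea. The paper's resolution is not combinatorial but analytic: it uses that $\llangle[\cdot],[x]\rrangle$ is differentiable on an open dense set $U_x$, with gradient at $z$ equal to the \emph{unique closest point} of $[x]$ to $z$ (Proposition~\ref{prop.max filtering derivative}). If $S([x],[y])$ had dimension $d$, it would contain a nonempty open set; intersecting with $U_x\cap U_y$ and equating gradients gives $gx=g'y$ for some $g,g'\in\overline{G}$, hence $[x]=[y]$. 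This differentiability-plus-gradient-equals-closest-point argument is the crux you are missing, and it is what allows the result to extend beyond finite $G$.
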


Theorem~\ref{thm.injectivity}(c) generalizes Corollary~13 in~\cite{CahillIMP:22}, which only treats the case where $G$ is finite.
The proofs of (a) and (b) are straightforward, while our proof of (c) makes use of some technology from~\cite{CahillIMP:22}.
First, we review some facts from real algebraic geometry; see~\cite{BochnakCR:13} for more details.

A basic semialgebraic set is any set of the form $\{x\in\mathbb{R}^n:p(x)\geq0\}$, where $p\colon\mathbb{R}^n\to\mathbb{R}$ is a polynomial function.
A \textbf{semialgebraic} set is any set obtained from some combination of finite unions, finite intersections, and complements of basic semialgebraic sets.
We say a subgroup of $\operatorname{GL}(d)$ is semialgebraic if it is semialgebraic as a subset of $\mathbb{R}^{d\times d}$.
We say a function $\mathbb{R}^s\to\mathbb{R}^t$ is semialgebraic if its graph is semialgebraic as a subset of $\mathbb{R}^{s+t}$.

\begin{proposition}[Lemma~10 in~\cite{CahillIMP:22}]
\label{prop.max filtering is semialgebraic}
For every semialgebraic subgroup $G\leq\operatorname{O}(d)$, the corresponding max filtering map $\llangle[\cdot],[\cdot]\rrangle\colon\mathbb{R}^d\times\mathbb{R}^d\to\mathbb{R}$ is semialgebraic.
\end{proposition}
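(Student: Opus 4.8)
The plan is to exhibit the graph of the max filtering map as a subset of $\mathbb{R}^{2d+1}$ that is cut out by a first-order formula in the coordinates with polynomial atomic predicates, and then invoke the Tarski--Seidenberg theorem: semialgebraic sets are closed under coordinate projection, hence (being closed under finite boolean operations by definition) under arbitrary first-order operations, so every first-order definable set over the reals is semialgebraic (see~\cite{BochnakCR:13}).

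First I would simplify the map. Since every element of $G$ lies in $\operatorname{O}(d)$, we have $\langle gx,hy\rangle=\langle x,g^{-1}hy\rangle$ for all $g,h\in G$, and for fixed $g$ the map $h\mapsto g^{-1}h$ is a bijection of $G$; hence
\[
\llangle[x],[y]\rrangle=\sup_{p\in[x],\,q\in[y]}\langle p,q\rangle=\sup_{g,h\in G}\langle gx,hy\rangle=\sup_{g\in G}\langle x,gy\rangle.
\]
Next, note that $F(x,y,g):=\langle x,gy\rangle=\sum_{i,j}x_i\,g_{ij}\,y_j$ is a polynomial in the coordinates of $(x,y,g)\in\mathbb{R}^d\times\mathbb{R}^d\times\mathbb{R}^{d\times d}$, and that the hypothesis that $G$ is a semialgebraic subgroup of $\operatorname{O}(d)$ says exactly that the membership condition ``$g\in G$'' is a semialgebraic condition on $g$, hence is given by a first-order formula $\psi(g)$ in the entries of $g$.

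Then the graph $\Gamma:=\{(x,y,t)\in\mathbb{R}^d\times\mathbb{R}^d\times\mathbb{R}:t=\sup_{g\in G}\langle x,gy\rangle\}$ is defined by the first-order formula asserting that $t$ is an upper bound of $\{F(x,y,g):g\in G\}$ and that nothing smaller is:
\[
\big(\forall g\,(\psi(g)\to F(x,y,g)\le t)\big)\ \wedge\ \big(\forall\varepsilon>0\ \exists g\,(\psi(g)\wedge F(x,y,g)>t-\varepsilon)\big).
\]
By Tarski--Seidenberg, $\Gamma$ is semialgebraic, which is precisely the assertion that the max filtering map $(x,y)\mapsto\llangle[x],[y]\rrangle$ is semialgebraic. (If one prefers to avoid the quantifier over $\varepsilon$, one can first observe that a semialgebraic subgroup of the compact group $\operatorname{O}(d)$ is closed, hence compact, so the supremum is attained and $\Gamma=\{(x,y,t):(\exists g\,\psi(g)\wedge F(x,y,g)=t)\wedge(\forall g\,\psi(g)\to F(x,y,g)\le t)\}$; but this refinement is not needed.)

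The only genuinely delicate point — and the step I would be most careful with — is the passage from the supremum to a first-order formula: a priori the supremum over $G$ need not be attained, so it must be encoded through the least-upper-bound characterization with the auxiliary quantifier over $\varepsilon$ (or else one must separately argue compactness of $G$). Everything else is bookkeeping: recording that $F$ is a polynomial, that $\psi$ is first-order by hypothesis, and that first-order definability implies semialgebraicity.
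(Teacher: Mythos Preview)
Your argument is correct: expressing the graph of $(x,y)\mapsto\sup_{g\in G}\langle x,gy\rangle$ by a first-order formula with polynomial atoms and invoking Tarski--Seidenberg is exactly the right mechanism, and your least-upper-bound encoding of the supremum handles the case where attainment is not assumed. Note, however, that the present paper does not supply its own proof of this proposition; it is quoted verbatim as Lemma~10 of~\cite{CahillIMP:22}, so there is nothing here to compare against beyond observing that your proof is the standard one and almost certainly coincides with the cited argument.
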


Every semialgebraic set can be expressed as a disjoint union of finitely many semialgebraic sets such that each is either homeomorphic to an open hypercube of positive dimension or a singleton set of zero dimension.
The \textbf{dimension} of a semialgebraic set is the maximum of these dimensions over the decomposition, though it does not depend on the choice of decomposition.

\begin{definition}[cf.\ \cite{DymG:22,CahillIMP:22}]
Given a semialgebraic subgroup $G\leq\operatorname{O}(d)$, we say the corresponding max filtering map $\llangle [\cdot],[\cdot]\rrangle\colon\mathbb{R}^d\times\mathbb{R}^d\to\mathbb{R}$ is $k$-\textbf{strongly separating} if for every $x,y\in\mathbb{R}^d$ with $[x]\neq[y]$, it holds that
\[
\operatorname{dim}\big\{z\in\mathbb{R}^d:\llangle [z],[x]\rrangle=\llangle [z],[y]\rrangle\big\}
\leq d-k.
\]
\end{definition}

\begin{proposition}[Theorem~12 in~\cite{CahillIMP:22}]
\label{prop.semialgebraic injective}
Consider any semialgebraic subgroup $G\leq\operatorname{O}(d)$ with $k$-strongly separating max filtering map $\llangle[\cdot],[\cdot]\rrangle\colon\mathbb{R}^d\times\mathbb{R}^d\to\mathbb{R}$ for some $k\in\mathbb{N}$.
For generic $z_1,\ldots,z_n\in\mathbb{R}^d$, the max filter bank $[x]\mapsto\{\llangle[z_i],[x]\rrangle\}_{i=1}^n$ is injective provided $n\geq 2d/k$.
\end{proposition}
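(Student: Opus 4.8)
The plan is to show that the set of ``bad'' template tuples,
\[
B:=\Big\{(z_1,\dots,z_n)\in(\mathbb{R}^d)^n:\ [x]\mapsto\{\llangle[z_i],[x]\rrangle\}_{i=1}^n\text{ is not injective}\Big\},
\]
is contained in a semialgebraic subset of $(\mathbb{R}^d)^n$ of dimension strictly less than $nd$; since the complement of such a set contains a dense open semialgebraic set of full Lebesgue measure, this is precisely the assertion that the max filter bank is injective for generic $z_1,\dots,z_n$. First I would realize $B$ as the image, under the coordinate projection $\pi\colon(\mathbb{R}^d)^n\times\mathbb{R}^d\times\mathbb{R}^d\to(\mathbb{R}^d)^n$, of the incidence set
\[
\tilde E:=\Big\{(z_1,\dots,z_n,x,y):\ \|x\|=1,\ [x]\neq[y],\ \llangle[z_i],[x]\rrangle=\llangle[z_i],[y]\rrangle\text{ for }i=1,\dots,n\Big\}.
\]
The constraint $\|x\|=1$ loses nothing: any collision $\{\llangle[z_i],[a]\rrangle\}_i=\{\llangle[z_i],[b]\rrangle\}_i$ with $[a]\neq[b]$ has at least one of $a,b$ nonzero, and since max filtering is symmetric and positively homogeneous of degree one in each argument, dividing both representatives by the larger of their norms and relabeling produces a collision whose first representative is a unit vector; conversely every point of $\tilde E$ witnesses non-injectivity. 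So $B=\pi(\tilde E)$, and it suffices to bound $\dim\tilde E$.

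Next I would verify that $\tilde E$ is semialgebraic and fiber it over the other coordinate projection $\rho\colon\tilde E\to\mathbb{R}^d\times\mathbb{R}^d$, $(z_1,\dots,z_n,x,y)\mapsto(x,y)$. Each defining condition is semialgebraic: $\{\|x\|=1\}$ is algebraic; $\{(g,x,y):g\in G,\ gy=x\}$ is semialgebraic because $G\leq\operatorname{O}(d)$ is a semialgebraic subgroup, so its projection $\{(x,y):[x]=[y]\}$ is semialgebraic by Tarski--Seidenberg and hence so is its complement; and $\{\llangle[z_i],[x]\rrangle=\llangle[z_i],[y]\rrangle\}$ is semialgebraic by Proposition~\ref{prop.max filtering is semialgebraic}. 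Thus $\tilde E$ is semialgebraic and $\pi,\rho$ are semialgebraic maps. The image of $\rho$ lies in $\{(x,y):\|x\|=1\}$, which has dimension $2d-1$, and for any $(x,y)$ in that image we have $[x]\neq[y]$, so
\[
\rho^{-1}(x,y)=\prod_{i=1}^{n}\big\{z\in\mathbb{R}^d:\llangle[z],[x]\rrangle=\llangle[z],[y]\rrangle\big\}
\]
is a product of $n$ copies of a semialgebraic set of dimension at most $d-k$, by the $k$-strongly separating hypothesis; hence every fiber of $\rho$ has dimension at most $n(d-k)$. The semialgebraic fiber-dimension theorem (see~\cite{BochnakCR:13}) then yields $\dim\tilde E\le(2d-1)+n(d-k)$, and since semialgebraic maps do not increase dimension, $\dim B\le 2d-1+n(d-k)$.

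Finally I would close the arithmetic: $\dim B<nd$ holds as soon as $2d-1+n(d-k)<nd$, i.e.\ $n>(2d-1)/k$, and since $n$ is a positive integer one checks (case by case on $2d\bmod k$) that $n\geq 2d/k$ implies this strict inequality. Therefore $\overline{B}$ is a semialgebraic set of dimension $<nd$, its complement is a dense open semialgebraic set of full measure, and the max filter bank is injective for every tuple in that complement.

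The main obstacle is not any deep step but getting the constant \emph{exactly} $2d/k$ rather than one more: the reduction to $\|x\|=1$, which trims the base of $\rho$ from dimension $2d$ to $2d-1$, is precisely what promotes $n>2d/k$ to $n\geq 2d/k$, and one must be careful to invoke the $k$-strongly separating hypothesis uniformly over all pairs with $[x]\neq[y]$ when bounding the fiber dimension. (As a consistency check, note that this hypothesis with $k\geq 1$ already forces the orbits of $G$ to be closed: otherwise there exist $[x]\neq[y]$ with $y\in\overline{Gx}$, and then $\llangle[z],[x]\rrangle=\llangle[z],[y]\rrangle$ for \emph{every} $z\in\mathbb{R}^d$ because $\llangle[z],[\cdot]\rrangle$ is $\|z\|$-Lipschitz, contradicting $\dim\{z:\llangle[z],[x]\rrangle=\llangle[z],[y]\rrangle\}\le d-k<d$.)
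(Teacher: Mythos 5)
Your proof is correct: realizing the bad set as the projection of the incidence set, normalizing to $\|x\|=1$ via positive homogeneity of max filtering, bounding each fiber by $n(d-k)$ using the $k$-strongly separating hypothesis, and the count $2d-1+n(d-k)<nd$ whenever $nk\geq 2d$ all hold up, and the semialgebraicity and fiber-dimension facts you invoke are standard. Note that the paper itself gives no proof of this proposition (it is imported as Theorem~12 of \cite{CahillIMP:22}), and your argument is essentially the same dimension-counting proof as in that reference, so there is nothing genuinely different to compare.
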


We will use Proposition~\ref{prop.semialgebraic injective} to prove Theorem~\ref{thm.injectivity}(c).
To prove strongly separating, we will leverage the differentiability of max filtering:

\begin{proposition}
\label{prop.max filtering derivative}
Suppose $G\leq\operatorname{O}(d)$ is closed.
\begin{itemize}
\item[(a)]
$\llangle [\cdot],[x]\rrangle$ is differentiable at $z\in\mathbb{R}^d$ if and only if $[x]$ has a unique closest point $y$ to $z$, in which case $\nabla\llangle [\cdot],[x]\rrangle(z)=y$.
\item[(b)]
For each $[x]\in\mathbb{R}^d/G$, there is an open and dense subset $U_x$ of $\mathbb{R}^d$ such that $\llangle [\cdot],[x]\rrangle$ is differentiable at every $z\in U_x$.
\item[(c)]
For any $x,z\in\mathbb{R}^d$, the open interval between $z$ and any of its closest points in $[x]$ is contained in $U_x$. 
\end{itemize}
\end{proposition}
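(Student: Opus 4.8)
The plan is to recognise $h_x:=\llangle[\cdot],[x]\rrangle\colon\mathbb R^d\to\mathbb R$ as a support function. Since $G\le\operatorname O(d)$ is closed it is compact, so $[x]=G\cdot x$ is compact, and since $G$ acts by isometries, $\sup_{p\in[z],\,q\in[x]}\langle p,q\rangle=\sup_{q\in[x]}\langle z,q\rangle$; hence $h_x(z)=\max_{q\in[x]}\langle z,q\rangle$ is the support function of the compact convex body $\operatorname{conv}[x]$. In particular $h_x$ is finite, convex and positively homogeneous, hence locally Lipschitz, and by the standard description of subdifferentials of support functions, $\partial h_x(z)$ is the face of $\operatorname{conv}[x]$ exposed by $z$, namely $\partial h_x(z)=\operatorname{conv}\{q\in[x]:\langle z,q\rangle=h_x(z)\}$. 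Because $\|q\|=\|x\|$ is constant on $[x]$, expanding $\|z-q\|^2=\|z\|^2+\|x\|^2-2\langle z,q\rangle$ shows $\{q\in[x]:\langle z,q\rangle=h_x(z)\}$ equals the set $P(z)$ of points of $[x]$ closest to $z$.

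Part (a) then falls out of convex analysis: a finite convex function is differentiable at $z$ if and only if $\partial h_x(z)$ is a singleton, in which case $\partial h_x(z)=\{\nabla h_x(z)\}$. Here $\partial h_x(z)=\operatorname{conv}P(z)$ is a singleton exactly when $P(z)=\{y\}$ for a unique closest point $y$, and then $\nabla h_x(z)=y$.

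For (c) I would first invoke the elementary metric-projection fact that closest points are locally unique on segments: if $y\in P(z)$ and $w=(1-t)z+ty$ with $t\in(0,1)$, then for any $q\in[x]$ we have $\|w-q\|\ge\|z-q\|-\|z-w\|\ge\|z-y\|-\|z-w\|=\|w-y\|$, and equality forces $q\in P(z)$ together with $w\in[z,q]$ (strict convexity of $\|\cdot\|_2$), which pins $q=y$; so $P(w)=\{y\}$, and by (a), $h_x$ is differentiable on the open segment $(z,y)$. To land $(z,y)$ inside the \emph{open} set $U_x$ of part (b), I would upgrade this to: every $w\in(z,y)$ has a neighbourhood of points with a unique closest point in $[x]$. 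For this, note that $[x]$ is the orbit of a compact group acting linearly (hence smoothly), so it is a compact $C^\infty$ submanifold of $\mathbb R^d$ and thus has positive reach $\tau$; moreover $y\in P(z)$ at distance $R:=\|z-y\|$ forces $[x]$ to lie outside the open ball $B(z,R)$, whence the second fundamental form of $[x]$ at $y$ in the direction $\frac{z-y}{R}$ has all eigenvalues $\le 1/R$, so $w=(1-t)z+ty$ lies strictly before the first focal point along that normal ray. Hence the normal exponential map of $[x]$ is a local diffeomorphism near the normal vector $w-y$ based at $y$; combining this with a compactness argument — which confines the closest points of every $w'$ near $w$ to a shrinking patch of $[x]$ around $y$ (since such a closest point $q^{\ast}$ satisfies $R\le\|q^{\ast}-z\|\le R+o(1)$ and $\bigcap_{\epsilon}\{q\in[x]:\|q-z\|\le R+\epsilon\}=P(z)=\{y\}$) — yields a unique closest point for each such $w'$, i.e.\ $w\in\operatorname{int}\{z:h_x\text{ is differentiable at }z\}=:U_x$.

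With $U_x$ so defined, (b) is then immediate: $U_x$ is open by construction and $h_x$ is differentiable on it; it is dense because every $z\notin[x]$ is the limit of the points $(1-t)z+ty\in U_x$ as $t\to0^+$ (with $y\in P(z)$), while the open tube $\{z:\operatorname{dist}(z,[x])<\tau\}$ lies in $U_x$ and contains $[x]$. (A more algebraic route to (b): closed subgroups of $\operatorname O(d)$ are semialgebraic, so by Proposition~\ref{prop.max filtering is semialgebraic} $h_x$ is a continuous semialgebraic function, hence $C^\infty$ on a dense open semialgebraic set; but one still has to check that such a set can be taken to contain the segments of (c).) I expect the only genuine obstacle to be the word ``open'' in (b): the differentiability set of an arbitrary convex function need not have dense interior — e.g.\ a convex $f\colon\mathbb R\to\mathbb R$ with a corner at every rational — so some regularity of $[x]$ is unavoidable, and promoting ``$h_x$ differentiable on $(z,y)$'' to ``$(z,y)\subseteq\operatorname{int}(\text{differentiability set})$'' — equivalently, ruling out $w_n\to w\in(z,y)$ with each $w_n$ having two equally-close points of $[x]$ that collapse onto the closest point of $w$ — is the step that really uses the positive reach of orbits and where I would concentrate the work.
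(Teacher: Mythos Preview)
Your approach is correct and amounts to a self-contained derivation of the nearest-point-projection facts that the paper simply imports from Dudek--Holly~\cite{DudekH:94}: the paper invokes Corollary~3.14 there for (b) (the set of points with a unique nearest point in a compact smooth submanifold is open and dense) and Theorem~3.13a there for (c) (points on the open segment to a nearest point inherit that same unique nearest point), while your positive-reach/focal-point argument reproves exactly these statements. Your support-function treatment of (a) likewise matches the paper's appeal to the subgradient characterization from~\cite{CahillIMP:22}.

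One slip to fix: in your compactness step you write $\bigcap_\epsilon\{q\in[x]:\|q-z\|\le R+\epsilon\}=P(z)=\{y\}$, but the hypothesis of (c) allows $P(z)$ to have several elements --- $y$ is merely \emph{one} of $z$'s closest points. The repair is immediate: run the compactness argument at $w$ rather than at $z$, using $r:=\|w-y\|=(1-t)R$ in place of $R$. You have already established $P(w)=\{y\}$; the touching-sphere bound on the second fundamental form at $y$ in the direction $(w-y)/r$ still holds (indeed with the sharper constant $1/r>1/R$, since $[x]$ lies outside $B(w,r)$ as well), so $w$ is strictly before the first focal point and the rest of your local-diffeomorphism-plus-compactness argument goes through verbatim.
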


\begin{proof}
First, (a) follows from Lemma~2(f) and Theorem~27 in~\cite{CahillIMP:22}, since this characterizes when the subgradient of $\llangle [\cdot],[x]\rrangle$ is singleton.
For (b), Proposition~1.20 in~\cite{Meinrenken:03} gives that $[x]$ is a smooth, compact submanifold of $\mathbb{R}^d$, and so Corollary~3.14 in~\cite{DudekH:94} delivers an open and dense subset $U_x$ of $\mathbb{R}^d$ such that every $z\in U_x$ has a unique closest point to $[x]$; the result then follows from the characterization in (a).
Finally, (c) follows from Theorem~3.13a in~\cite{DudekH:94}.
\end{proof}

We will also leverage the following (useful) equivalence:

\begin{proposition}
\label{prop.closed subgroups}
Suppose $G\leq\operatorname{O}(d)$.
If the orbits of $G$ are closed, then they are also the orbits of the topological closure $\overline{G}$ of $G$ in $\operatorname{O}(d)$.
Furthermore, the following are equivalent:
\begin{itemize}
\item[(a)]
$G$ is topologically closed.
\item[(b)]
$G$ is algebraic.
\item[(c)]
$G$ is semialgebraic.
\end{itemize}
\end{proposition}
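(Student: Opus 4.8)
The plan is to settle the orbit statement by a direct sandwich argument and then prove the equivalence via the cycle $(b)\Rightarrow(c)\Rightarrow(a)\Rightarrow(b)$. For the orbit statement, I would fix $x\in\mathbb{R}^d$ and consider the continuous orbit map $\mu_x\colon\operatorname{O}(d)\to\mathbb{R}^d$ given by $g\mapsto gx$. Since the closure of a subgroup in a topological group is again a subgroup, $\overline{G}$ is a subgroup of $\operatorname{O}(d)$ containing $G$, so $G\cdot x\subseteq\overline{G}\cdot x$; and continuity of $\mu_x$ gives $\overline{G}\cdot x=\mu_x(\overline{G})\subseteq\overline{\mu_x(G)}=\overline{G\cdot x}$. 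If $G\cdot x$ is closed, the two outer sets coincide, forcing $\overline{G}\cdot x=G\cdot x$, as desired. (Note that the closure of $G$ in $\operatorname{O}(d)$ agrees with its closure in $\mathbb{R}^{d\times d}$, since $\operatorname{O}(d)$ is already closed.)

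The implication $(b)\Rightarrow(c)$ is immediate, as algebraic sets are semialgebraic. For $(c)\Rightarrow(a)$ I would argue by contradiction: suppose $G$ is semialgebraic but $\overline{G}\neq G$, and choose $h\in\overline{G}\setminus G$. The coset $hG$ is contained in $\overline{G}$ (since $\overline{G}$ is a group), disjoint from $G$ (since $h\notin G$), and semialgebraic (being the image of $G$ under the invertible linear map $A\mapsto hA$ on $\mathbb{R}^{d\times d}$, hence a semialgebraic homeomorphism). Therefore $\dim(\overline{G}\setminus G)\geq\dim(hG)=\dim G=\dim\overline{G}$, where the last equality uses that a semialgebraic set and its Euclidean closure have the same dimension. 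This contradicts the standard fact (Bochnak--Coste--Roy) that the frontier $\overline{G}\setminus G$ of a semialgebraic set has dimension strictly less than that of $G$, so $G=\overline{G}$.

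Finally, for $(a)\Rightarrow(b)$: a topologically closed subgroup of the compact group $\operatorname{O}(d)$ is a closed subset of a compact set, hence compact, and I would invoke the classical theorem that every compact subgroup of $\operatorname{GL}(d,\mathbb{R})$ is a real algebraic group; since $\operatorname{O}(d)$ is itself an algebraic set, $G$ is then algebraic as a subset of $\mathbb{R}^{d\times d}$. The routine parts are the sandwich argument, $(b)\Rightarrow(c)$, and the semialgebraic dimension bookkeeping in $(c)\Rightarrow(a)$. The main obstacle is $(a)\Rightarrow(b)$, which is not elementary: a self-contained proof would pass through Peter--Weyl theory, identifying the restriction to $G$ of the polynomial functions on $\mathbb{R}^{d\times d}$ with the Hopf algebra of representative functions of $G$ and then arguing that $G$ exhausts the real points of the resulting affine algebraic group. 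In the write-up I would cite this classical result rather than reprove it.
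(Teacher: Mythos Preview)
Your proposal is correct. The paper takes a more citation-heavy route: it derives the orbit statement from a theorem in D\'{i}az-Ramos, and for $(c)\Rightarrow(a)$ it observes that multiplication and inversion restrict to semialgebraic maps on $G$ and then invokes a result of Choi--Shu characterizing semialgebraic groups. By contrast, your sandwich argument for the orbit statement is a clean elementary proof that avoids any external reference, and your $(c)\Rightarrow(a)$ via the coset $hG$ and the frontier-dimension inequality $\dim(\overline{G}\setminus G)<\dim G$ is likewise self-contained modulo standard real algebraic geometry. For $(a)\Rightarrow(b)$ both you and the paper cite the classical theorem that closed (equivalently compact) subgroups of $\operatorname{GL}(d,\mathbb{R})$ are algebraic, so neither approach gains ground there. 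Overall your version trades two black-box citations for transparent arguments, at the cost of a few more lines; the paper's version is terser but less illuminating.
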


\begin{proof}
Since $\operatorname{O}(d)$ is compact, the first claim follows from Theorem~5 in~\cite{DiazRamos:08}.
Next, Theorem~3.4.5 in~\cite{OnishchikV:12} gives (a)$\Rightarrow$(b), while (b)$\Rightarrow$(c) is immediate.
For (c)$\Rightarrow$(a), observe that multiplication and inversion over $\operatorname{O}(d)$ are algebraic morphisms.
When restricting to $G$, they become semialgebraic.
As such, Proposition~3.2 in~\cite{ChoiS:05} gives the result.
\end{proof}

We are now ready to prove the main result.

\begin{proof}[Proof of Theorem~\ref{thm.injectivity}]
For (a), let $Y$ be a $T_1$ space, and let $f\colon\mathbb{R}^d/G\to Y$ be continuous.
By assumption, there is a point $[x]\in \mathbb{R}^d/G$ such that $\{[x]\}$ is not closed, but since $Y$ is $T_1$, the image $\{f([x])\}$ is closed.
By continuity, it follows that the inverse image $f^{-1}(\{f([x])\})$ is closed, and is therefore a strict superset of $\{[x]\}$.
Thus, $f$ is not injective.

For (b), observe that $\llangle [z],[x]\rrangle = \|z\|\|x\|$, which determines $[x]$ whenever $z\neq0$.

For (c), considering Propositions~\ref{prop.closed subgroups} and~\ref{prop.semialgebraic injective}, it suffices to show that the max filtering map over $\overline{G}$ is $1$-strongly separating, i.e., the dimension of the semialgebraic set
\[
S([x],[y])
:=\big\{z\in\mathbb{R}^d:\llangle [z],[x]\rrangle=\llangle [z],[y]\rrangle\big\}
\]
is at most $d-1$ for each $[x]\neq[y]$.
Suppose $\operatorname{dim}S([x],[y])=d$ for some $[x],[y]\in\mathbb{R}^d/\overline{G}$.
Then $S([x],[y])$ contains a nonempty open subset $U$ of $\mathbb{R}^d$.
By Proposition~\ref{prop.max filtering derivative}(b), $\llangle [\cdot],[x]\rrangle$ and $\llangle [\cdot],[y]\rrangle$ are differentiable on open and dense subsets $U_x$ and $U_y$ of $\mathbb{R}^d$, respectively.
Then $U\cap U_x\cap U_y$ is a nonempty open set, over which we have an equality of differentiable functions:
\[
\llangle [z],[x]\rrangle
=\llangle [z],[y]\rrangle
\qquad \forall z\in U\cap U_x\cap U_y.
\]
By Proposition~\ref{prop.max filtering derivative}(a), taking the gradient of both sides at any such $z$ gives $gx=g'y$ for some $g,g'\in \overline{G}$.
Overall, $\operatorname{dim}S([x],[y])=d$ implies $[x]=[y]$, as desired.
\end{proof}

In what follows, we demonstrate that Theorem~\ref{thm.injectivity} is the best-possible result that factors through Proposition~\ref{prop.semialgebraic injective}, thereby resolving Problem~15(a) in~\cite{CahillIMP:22}.
Despite this, Theorem~\ref{thm.injectivity} is still suboptimal in settings like real phase retrieval~\cite{BalanCE:06} (where $G=\{\pm\operatorname{id}\}$ and $2d-1$ generic templates suffice) and reflection groups~\cite{MixonP:22} (where $d$ generic templates suffice).
We discuss the latter setting in more detail in Section~\ref{sec.pos def}, and we leave further refinements of Theorem~\ref{thm.injectivity} for future work.

\begin{theorem}
Given a semialgebraic subgroup $G\leq\operatorname{O}(d)$, the corresponding max filtering map is $d$-strongly separating if the orbits of $G$ are concentric spheres in $\mathbb{R}^d$.
Otherwise, the max filtering map is $1$-strongly separating, but not $2$-strongly separating.
\end{theorem}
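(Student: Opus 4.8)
The plan is to establish the three assertions in turn. Suppose first that the orbits of $G$ are concentric spheres. Then the orbit of a vector is determined by its norm, so $[x]\neq[y]$ forces $\|x\|\neq\|y\|$; moreover $[x]$ is the entire sphere of radius $\|x\|$, so $\llangle[z],[x]\rrangle=\max_{g\in G}\langle z,gx\rangle=\|z\|\,\|x\|$ for every $z$. Hence $S([x],[y])=\{z:\|z\|(\|x\|-\|y\|)=0\}=\{0\}$, which has dimension $0=d-d$, giving $d$-strong separation. For the other two assertions, recall that a semialgebraic $G\leq\operatorname{O}(d)$ is closed (Proposition~\ref{prop.closed subgroups}), hence compact with closed orbits, and $G=\overline G$; thus the argument in the proof of Theorem~\ref{thm.injectivity}(c) already yields $\dim S([x],[y])\leq d-1$ whenever $[x]\neq[y]$, i.e., the map is $1$-strongly separating. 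It remains to show that if the orbits are not all concentric spheres then the map is not $2$-strongly separating.

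To this end I would produce a pair $[x]\neq[y]$ with $\dim S([x],[y])\geq d-1$. By hypothesis some orbit is not a sphere, and it is necessarily the orbit of a nonzero vector since $\{0\}$ is the sphere of radius $0$; after rescaling, choose $x_0$ with $\|x_0\|=1$ such that $O:=[x_0]$ is a proper subset of the unit sphere $S^{d-1}$, and pick $c\in S^{d-1}\setminus O$. The orbits $O$ and $[c]$ are distinct, hence disjoint, so $x_0\notin[c]$ and $c\notin O$. For unit vectors $v,w$ we have $\langle v,gw\rangle\leq 1$ with equality only when $gw=v$; since $G$ is compact the maximum over $g\in G$ is attained, so $\llangle[v],[w]\rrangle=\max_{g\in G}\langle v,gw\rangle<1$ whenever $v\notin[w]$. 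Consequently the function $\phi(z):=\llangle[z],[x_0]\rrangle-\llangle[z],[c]\rrangle$, which is continuous (max filtering is Lipschitz in each argument) and semialgebraic (Proposition~\ref{prop.max filtering is semialgebraic}), satisfies $\phi(c)=\llangle[c],[x_0]\rrangle-1<0$ and $\phi(x_0)=1-\llangle[x_0],[c]\rrangle>0$.

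Now set $S:=S([x_0],[c])=\phi^{-1}(0)$, a closed semialgebraic set. Its complement $\{\phi<0\}\sqcup\{\phi>0\}$ is a disjoint union of two nonempty open sets, so $\mathbb{R}^d\setminus S$ is disconnected. If $\dim S\leq d-2$, then $\mathbb{R}^d\setminus S$ would be path-connected — any two of its points are joined by a broken-line path through a generic third point, because the set of bad third points has dimension at most $d-1$ — and the intermediate value theorem applied to $\phi$ along such a path would contradict $\phi(c)<0<\phi(x_0)$. Hence $\dim S\geq d-1$, and combining with the $1$-strong separation above gives $\dim S=d-1$, so the map is not $2$-strongly separating. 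I expect this last dimension bound to be the main obstacle: the choice of the pair $(O,[c])$ is natural and the sign change of $\phi$ follows immediately from Cauchy--Schwarz and the properties of max filtering recorded above, whereas ruling out $\dim S\leq d-2$ rests on the topological fact that a semialgebraic set of codimension at least $2$ does not disconnect $\mathbb{R}^d$.
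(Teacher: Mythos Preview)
Your argument is correct and takes a genuinely different route from the paper's for the ``not $2$-strongly separating'' part. The paper chooses unit vectors $x,y$ with $[x]\neq[y]$ aligned so that $\llangle[x],[y]\rrangle=\langle x,y\rangle$, walks along the segment $z(t)=(1-t)x+ty$, locates a zero $z(t_0)$ of $h(z):=\llangle[z],[x]\rrangle-\llangle[z],[y]\rrangle$ by the intermediate value theorem, and then invokes the differentiability results of Proposition~\ref{prop.max filtering derivative} to show $\nabla h(z(t_0))\neq 0$, so the implicit function theorem produces a $(d-1)$-dimensional submanifold inside $S([x],[y])$. You instead argue topologically: the sign change of $\phi$ shows that $S=\phi^{-1}(0)$ separates $\mathbb{R}^d$, and a semialgebraic set of dimension at most $d-2$ cannot separate $\mathbb{R}^d$ because any two points in its complement can be joined by a two-segment path through a generic third point (the cones from the endpoints over $S$ being semialgebraic of dimension at most $d-1$). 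Your approach avoids the differentiability machinery of Proposition~\ref{prop.max filtering derivative} entirely and is in that sense more elementary; the paper's approach, by contrast, is more constructive, exhibiting an explicit smooth $(d-1)$-manifold inside the level set rather than deducing the dimension bound indirectly. A minor stylistic note: once you have established that $\mathbb{R}^d\setminus S$ is disconnected, the subsequent appeal to the intermediate value theorem along the broken-line path is redundant, since path-connectedness already contradicts disconnectedness.
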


\begin{proof}
First, suppose the orbits of $G$ are concentric spheres in $\mathbb{R}^d$.
Then $[x]\neq[y]$ implies
\[
\big\{z\in\mathbb{R}^d:\llangle [z],[x]\rrangle=\llangle [z],[y]\rrangle\big\}
=\big\{z\in\mathbb{R}^d:\|z\|\cdot\big(\|x\|-\|y\|\big)=0\big\}
=\{0\},
\]
which has dimension zero.

Otherwise, there exist $x$ and $y$ in the unit sphere in $\mathbb{R}^d$ such that $[x]\neq[y]$.
The proof of Theorem~\ref{thm.injectivity}(c) gives that the max filtering map is $1$-strongly separating.
It remains to show that it is not $2$-strongly separating.
By Proposition~\ref{prop.closed subgroups}, $G$ is topologically closed, and so $[x]$ and $[y]$ are compact.
Thus, we may assume $\llangle[x],[y]\rrangle=\langle x,y\rangle$ without loss of generality.
As such, Lemma~2(f) in~\cite{CahillIMP:22} gives that $x$ minimizes distance from $y$ over $[x]$, and similarly, $y$ minimizes distance from $x$ over $[y]$.
Consider $z\colon[0,1]\to\mathbb{R}^d$ defined by $z(t):=(1-t)x+ty$ and $h\colon\mathbb{R}^d\to\mathbb{R}$ defined by $h(z):=\llangle [z],[x]\rrangle-\llangle [z],[y]\rrangle$.
Then $(h\circ z)(0)=1-\langle x,y\rangle>0$ and $(h\circ z)(1)=\langle x,y\rangle-1<0$; here, the inequalities follow from the fact that $x$ and $y$ are distinct points on the unit sphere.
By the intermediate value theorem, there exists $t_0\in(0,1)$ such that $(h\circ z)(t_0)=0$.

By Proposition~\ref{prop.max filtering derivative}(c), we have $z(t)\in U_x\cap U_y$ for every $t\in(0,1)$.
Furthermore, by Proposition~\ref{prop.max filtering derivative}(b), $h$ is differentiable over $U_x\cap U_y$.
For each $t\in(0,1)$, the reverse triangle inequality gives that $x$ minimizes distance from $z(t)$ over $[x]$, and that $y$ minimizes distance from $z(t)$ over $[y]$.
Then Proposition~\ref{prop.max filtering derivative}(a) implies 
\begin{align*}
(h\circ z)'(t)
&=\frac{d}{dt}\Big(\llangle [z(t)],[x]\rrangle-\llangle [z(t)],[y]\rrangle\Big)\\
&=\nabla\llangle [\cdot],[x]\rrangle(z(t))\cdot(y-x)-\nabla\llangle [\cdot],[y]\rrangle(z(t))\cdot(y-x)
=-\|x-y\|^2,
\end{align*}
which is nonzero by assumption.
Considering $(h\circ z)'(t)=\nabla h(z(t))\cdot z'(t)$, it follows that $\nabla h$ is nonzero at $z(t_0)$.
The implicit function theorem then gives that $h$ vanishes on a $(d-1)$-dimensional submanifold in a neighborhood of $z(t_0)$, which implies the result.
\end{proof}

\section{Upper Stability}
\label{sec.upper stability}

In the previous section, we identified conditions under which a max filter bank is injective.
In practice, it is important to consider the extent to which the quotient metric on $\mathbb{R}^d/G$ is distorted by this embedding $\Phi$ into Euclidean space $\mathbb{R}^n$.
In this section, we estimate the upper Lipschitz bound for this embedding and prove optimality of estimates when $G$ is finite or $-\operatorname{id}\in G$. Throughout this section, it is convenient to conflate a sequence $\{z_i\}_{i=1}^n$ in $\mathbb{R}^d$ with the $d\times n$ matrix whose $i$th column vector is $z_i$.

\subsection{Finite Groups}
\textbf{Throughout this subsection, we assume $G$ is finite.}

We give the optimal upper Lipschitz bound for any max filter bank. Our approach resembles a talk given by Dorsa Goreishi at the AMS 2024 Spring Southeastern Sectional meeting titled ``Generalizing max filtering with principal angles,'' which sketched a proof of the optimal upper Lipschitz bound of any $G$-invariant and positively homogeneous piecewise linear map in terms of its conical decomposition. In this section, we obtain the optimal upper Lipschitz bound by leveraging the notion of Voronoi decompositions which we now introduce.


Let $V_x$ denote the \textbf{open Voronoi cell} of $x$ relative to $[x]$.
This set is characterized by
\begin{equation}
\label{eq.open voronoi characterization}
y\in V_x
\quad
\Longleftrightarrow
\quad
\{x\}=\arg\min_{p\in[x]}\|p-y\|
\quad
\Longleftrightarrow
\quad
\{x\}=\arg\max_{p\in[x]}\langle p,y\rangle.
\end{equation}
Notice that the topological closure is similarly characterized by
\begin{alignat}{5}
y\in \overline{V_x}
\nonumber
&\quad
\Longleftrightarrow
\quad
x\in\arg\min_{p\in[x]}\|p-y\|
\quad
&&\Longleftrightarrow
\quad
x\in\arg\max_{p\in[x]}\langle p,y\rangle&&\\
\label{eq.closed voronoi characterization}
&\quad
\Longleftrightarrow
\quad
d([x],[y])=\|x-y\|
\quad
&&\Longleftrightarrow
\quad
\llangle [x],[y]\rrangle=\langle x,y\rangle&&
\quad
\Longleftrightarrow
\quad
x\in\overline{V_y}.
\end{alignat}
Next, the \textbf{Voronoi diagram} $Q_x:=\bigsqcup_{p\in[x]}V_p$ is a $G$-invariant dense disjoint union of open polyhedral cones with
\begin{equation}
\label{eq.disjoint union of cones}
y\in Q_x
\quad
\Longleftrightarrow
\quad
|\arg\max_{p\in[x]}\langle p,y\rangle|=1.
\end{equation}

For the following two examples, see Figure~\ref{fig:voronoi.first} for an illustration.

\begin{figure}[t]
  \centering
  \includegraphics[scale=0.55]{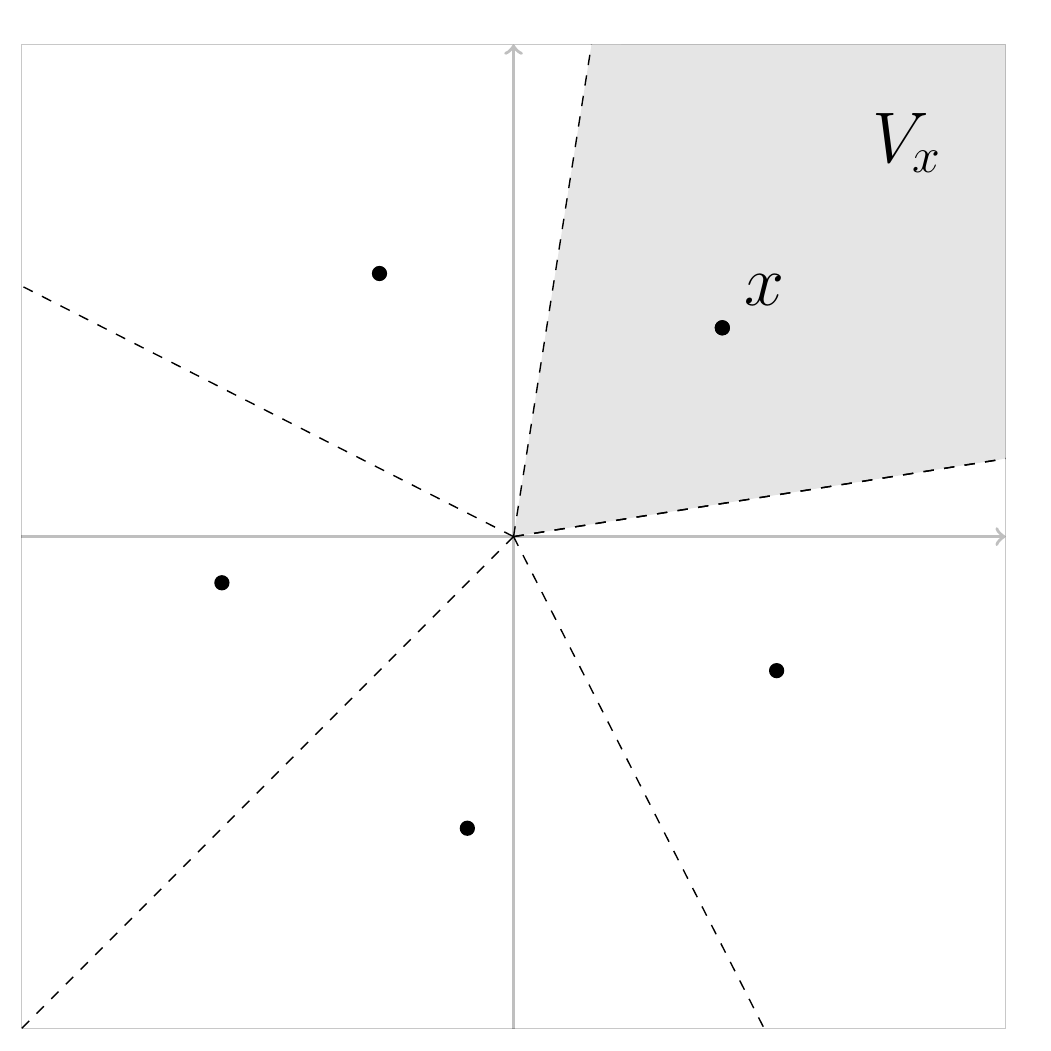}
  \includegraphics[scale=0.55]{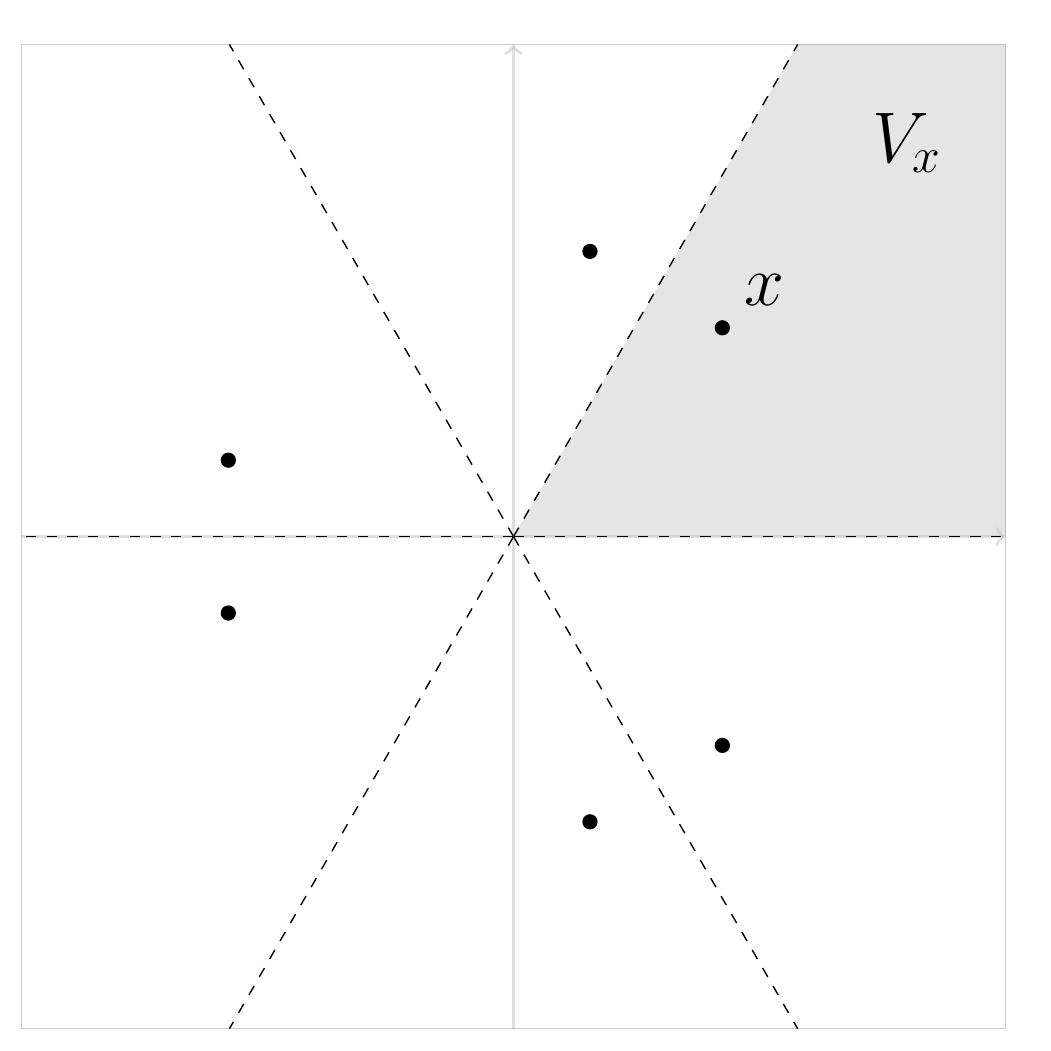}
  \caption{
    \textbf{(left)} Illustration of Example~\ref{ex.voronoi 2d rot} when $n=5$. The resulting Voronoi diagram $Q_x$ is sensitive to the argument of $x$. Note that $Q_x$ is the complement of the dashed lines. \textbf{(right)} Illustration of Example~\ref{ex.voronoi reflection} when $G$ is generated by reflections across the dashed lines. The resulting Voronoi diagram $Q_x$ does not change when $x$ is perturbed to another point in $V_x$.
  }
  \label{fig:voronoi.first}
\end{figure}
\begin{example}
  \label{ex.voronoi 2d rot}
Take $V=\mathbb R^2$ and suppose that $G\leq \operatorname{O}(2)$ consists of rotations by multiples of $\frac{2\pi}{n}$ radians on $V=\mathbb R^2$. For nonzero $x\in V$, it holds that
\[V_x = \big\{ p\in \mathbb R^2\setminus \{0\}: \arg p \in (\arg x - \tfrac{\pi}{n}, \arg x + \tfrac{\pi}{n}) \bmod 2\pi\big\},\]
while $V_0 = V$.
\end{example}

\begin{example}
  \label{ex.voronoi reflection}
Suppose $G\leq \operatorname O(V)$ is finite and generated by reflections across hyperplanes. For $x\in V$, let $\mathcal W_x$ denote the collection of closed Weyl chambers that contain $x$. Then $\overline V_x = \cup \mathcal W_x$. In particular, given an open Weyl chamber $W$, it holds that $V_x = W$ for all $x\in W$.
\end{example}

\textcolor{black}{Given templates $z_1,\dots, z_n$, it holds that $K\subseteq \mathbb R^d$ is a connected component of $\cap_{i=1}^n Q_{z_i}$ if and only if there exist $g_1,\dots,g_n\in G$ such that $ K = \cap_i V_{g_iz_i}\neq \varnothing$. In this case, $g_1z_1,\dots, g_nz_n\in \mathbb R^d$ are in fact unique, and we say that $\{g_iz_i\}_{i=1}^n$ is the $d\times n$ \textbf{matrix associated }to $K$. The following result shows that the optimal upper Lipschitz bound is given by the largest possible matrix operator $2$-norm $\|\cdot\|_{2\to 2}$ one may obtain from transformed templates $\{g_iz_i\}_{i=1}^n$ associated to connected components of $\cap_{i=1}^n Q_{z_i}$.}

\begin{theorem}
\label{thm.upper lipschitz bound finite}
Given finite $G\leq\operatorname{O}(d)$ with orbits $z_1,\ldots,z_n\in\mathbb{R}^d$, the max filter bank $\Phi\colon\mathbb{R}^d/G\to\mathbb{R}^n$ defined by $\Phi([x]):=\{\llangle [z_i],[x]\rrangle\}_{i=1}^n$ satisfies
\begin{equation}
  \label{eq.optimal upper bound finite}
\sup_{\substack{[x],[y]\in\mathbb{R}^d/G\\{[x]\neq[y]}}}\frac{\|\Phi([x])-\Phi([y])\|}{d([x],[y])}
=\max_{\substack{g_1,\ldots,g_n\in G\\ \cap_i V_{g_iz_i}\neq \varnothing}}\|\{g_iz_i\}_{i=1}^n\|_{2\to2}.
\end{equation}
\end{theorem}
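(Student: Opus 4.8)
The plan is to establish the claimed identity by proving two inequalities, with the key observation being that on the set $\bigcap_i Q_{z_i}$ the max filter bank $\Phi$ is a piecewise-linear map whose linear piece on the component $K$ with associated matrix $\{g_i z_i\}_{i=1}^n$ is precisely $x \mapsto \{\langle g_i z_i, x\rangle\}_{i=1}^n$. First, for the $\geq$ direction, I would fix $g_1,\dots,g_n \in G$ with $\bigcap_i V_{g_i z_i} \neq \varnothing$, pick a point $w$ in that intersection, and choose two directions $x, y$ such that $w + tx$ and $w + ty$ stay in the (open) intersection for small $t > 0$; then, using the Voronoi characterization \eqref{eq.open voronoi characterization}, $\llangle [z_i],[w+tx]\rrangle = \langle g_i z_i, w+tx\rangle$ and similarly for $y$, so $\|\Phi([w+tx]) - \Phi([w+ty])\| = t\,\|\{g_i z_i\}\|_{2\to 2}$ for an optimal choice of $x - y$, while $d([w+tx],[w+ty]) \leq t\|x-y\|$. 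Taking $t \to 0$ and optimizing over the unit direction $x - y$ recovers $\|\{g_i z_i\}\|_{2\to 2}$ in the limit; a small subtlety is ensuring $d([w+tx],[w+ty]) = \|tx - ty\|$ rather than merely $\leq$, which follows because both points lie in $\overline{V_w}$ (shrinking $t$ so that the segment between them does too) together with \eqref{eq.closed voronoi characterization}.

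For the harder $\leq$ direction, I would fix $[x] \neq [y]$ and reduce to the case where the quotient distance is realized by representatives $x, y$ with $\llangle [x],[y]\rrangle = \langle x, y\rangle$ (via \eqref{eq.closed voronoi characterization}). The idea is to move along the segment $z(t) = (1-t)x + ty$, $t \in [0,1]$, and bound $\|\Phi([x]) - \Phi([y])\|$ by integrating the derivative of $t \mapsto \Phi([z(t)])$. By Proposition~\ref{prop.max filtering derivative}(c) the open segment lies in $U_{z_i}$ for every $i$ (since $x$ is a closest point of $[x]$ to $y$, hence by the reverse triangle inequality to every $z(t)$, and likewise $y$), so each coordinate $t \mapsto \llangle [z_i],[z(t)]\rrangle$ is differentiable on $(0,1)$ with derivative $\langle \nabla \llangle[\cdot],[z_i]\rrangle(z(t)), y - x\rangle = \langle g_i(t) z_i, y - x\rangle$ for the unique closest point $g_i(t) z_i$. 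Outside a finite (in fact measure-zero, and by semialgebraicity finite) set of $t$, $z(t)$ lies in $\bigcap_i Q_{z_i}$, so it belongs to some component $K$ with associated matrix $M_K = \{g_i z_i\}$, and the vector-valued derivative is $M_K^{\!\top}(y - x)$, of norm at most $\|M_K\|_{2\to 2}\,\|x - y\|$. Integrating, $\|\Phi([x]) - \Phi([y])\| \leq \big(\max_K \|M_K\|_{2\to 2}\big)\|x - y\| = \big(\max_K \|M_K\|_{2\to 2}\big)\, d([x],[y])$.

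The main obstacle is the regularity bookkeeping along the segment: I need that $t \mapsto \Phi([z(t)])$ is Lipschitz (so that the fundamental theorem of calculus applies and the bound on the a.e.-derivative controls the increment) and that for all but finitely many $t$ the point $z(t)$ avoids the "walls" $\bigcup_i (\mathbb{R}^d \setminus Q_{z_i})$. Lipschitzness of each $\llangle[\cdot],[z_i]\rrangle$ is clear since it is a supremum of $1$-Lipschitz linear functionals over the compact orbit. For the wall-avoidance, $\mathbb{R}^d \setminus Q_{z_i}$ is a semialgebraic set of dimension $\leq d-1$ (it is the locus where the argmax over $[z_i]$ is non-unique), and a generic line, or more carefully the specific segment after an arbitrarily small perturbation, meets it in finitely many points; I would argue this directly using that the segment from $x$ to $y$ can be assumed to avoid lower-dimensional semialgebraic sets except at finitely many parameters, or alternatively work with the essentially-bounded a.e.-derivative and only invoke that the derivative, where it exists, is some $M_K^{\!\top}(y-x)$ with $K$ ranging over components — which suffices for the integral bound without enumerating the exceptional $t$. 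Finally I would note the $\max$ on the right-hand side is attained because $G$ is finite, so there are finitely many tuples $(g_1,\dots,g_n)$ to consider.
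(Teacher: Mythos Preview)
Your overall strategy matches the paper's proof: for $\leq$, walk along the straight segment between representatives $x,y$ realizing $d([x],[y])=\|x-y\|$, partition the segment by the connected components of $\bigcap_i Q_{z_i}$, and use that on each piece $\Phi$ acts as $p\mapsto (\{g_iz_i\})^\top p$; for $\geq$, evaluate the quotient at two nearby points inside a single component. The paper carries out the first step by an explicit telescoping sum over subsegments $[c_j,c_{j+1}]\subseteq\overline{K_j}$ rather than by integrating an a.e.\ derivative, but this is cosmetic.

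There are, however, two concrete errors in your writeup. First, in the $\geq$ direction, the assertion that $p,q\in\overline{V_w}$ forces $d([p],[q])=\|p-q\|$ is false: with $G=\{\pm\operatorname{id}\}\leq\operatorname{O}(2)$ and $w=(1,0)$, the points $p=(0,1)$ and $q=(0,-1)$ both lie in $\overline{V_w}$ but $d([p],[q])=0$. The characterization \eqref{eq.closed voronoi characterization} requires one point to lie in the Voronoi cell \emph{of the other}, not merely in a common third cell. The fix (which is what the paper does) is to take $x:=w\in\bigcap_i V_{g_iz_i}$ itself and $y:=w+tu$ for small $t>0$ so that $y\in V_w\cap\bigcap_i V_{g_iz_i}$; then $d([x],[y])=\|x-y\|$ directly. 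Second, your invocation of Proposition~\ref{prop.max filtering derivative}(c) is misapplied: that statement concerns the segment from a point to its nearest point in $[z_i]$, not the segment from $x$ to $y$, so it does not place the open segment inside $U_{z_i}$. You don't actually need this: each coordinate $t\mapsto\llangle[z_i],[z(t)]\rrangle$ is a maximum of finitely many affine functions of $t$, hence piecewise linear with finitely many breakpoints, and your integration argument (or the paper's telescoping) goes through without appealing to Proposition~\ref{prop.max filtering derivative} at all.
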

\begin{figure}[t]
  \centering
  \includegraphics[scale = 0.75]{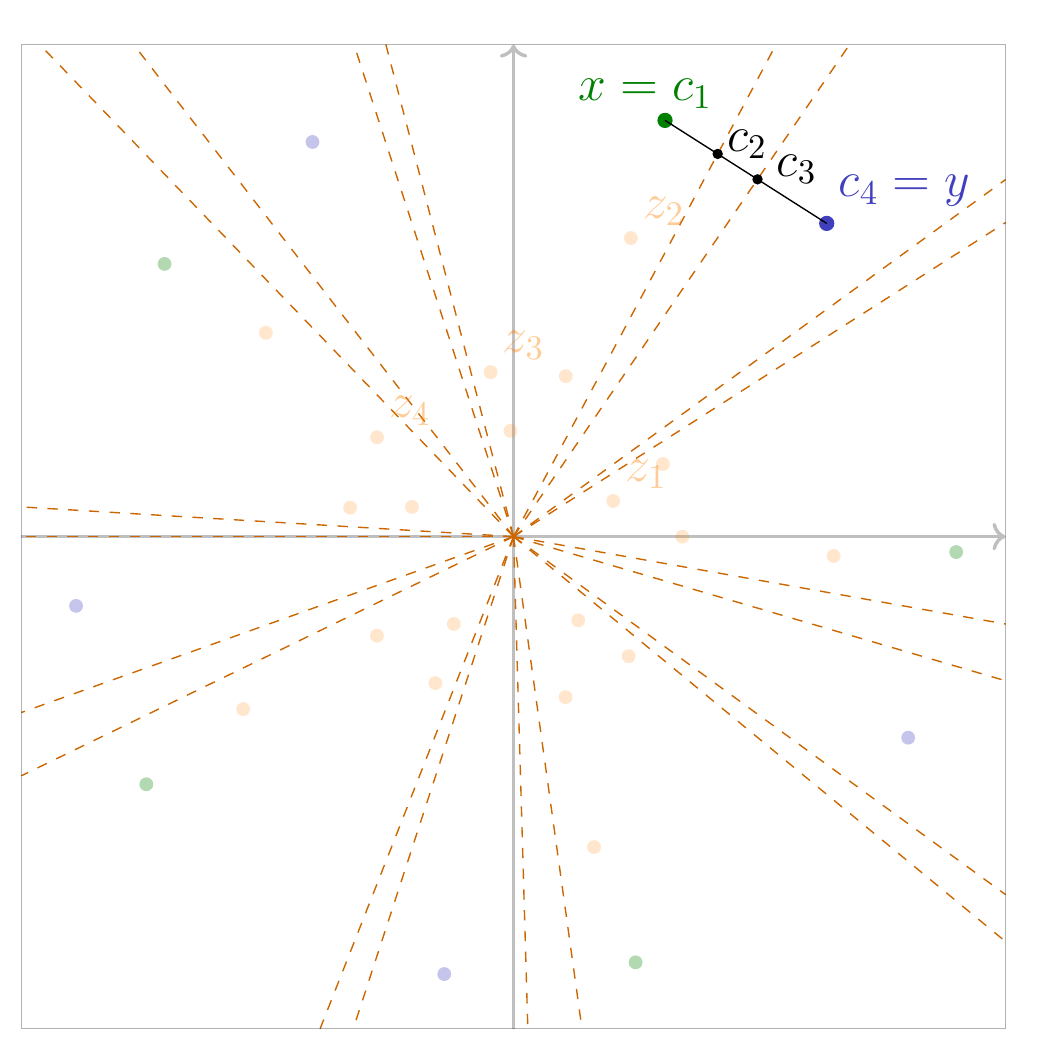}
  \caption{Illustration for the proof of Theorem~\ref{thm.upper lipschitz bound finite}. Here, $G\leq \operatorname O(2)$ consists of rotations by multiples of $\frac{2\pi}{5}$ radians. Points $x$ and $y$ are shown in green and blue colors respectively. Other points in their orbits are shown in the corresponding transparent color. Note that $d([x],[y])=\|x-y\|$. The templates $z_1,\dots, z_4$ are shown in transparent orange color along with their orbits. The union of the boundaries of their Voronoi diagrams (dashed orange lines) form the complement of $\cap_{i=1}^4 Q_{z_i}$. The line segment $[x,y]$ transverses through the connected components of $\cap_{i=1}^4 Q_{z_i}$, with $c_2$ and $c_3$ marking transition points.}
  \label{fig:upper aid}
\end{figure}
\begin{proof}
  By invariance, the index set of the left-hand quotient of~\eqref{eq.optimal upper bound finite} may be taken to be $\{(x,y)\in\mathbb R^d\times \mathbb R^d:\|x-y\|=d([x],[y])\neq 0\}$. We fix such $x,y\in\mathbb R^d$ and proceed.
  
  Let $\Pi_Q$ denote the set of connected components of $Q:=\cap_i Q_{z_i}$, and let $[x,y]$ denote the line segment joining $x$ to $y$. For $K\in \Pi_Q$, it holds that $[x,y]\cap \overline K$ is either empty or a closed line segment, being an intersection of closed convex sets, one of which is a line segment. By partitioning, it follows that there exist $K_1,\dots, K_{m-1}\in \Pi_Q$ and $c_1, \ldots, c_m \in [x,y]$ such that $x=c_1$, $y=c_m$ and $[c_j,c_{j+1}]\subseteq \overline K_j$ for each $j\in \{1,\dots,m-1\}$. See Figure~\ref{fig:upper aid} for an illustration.


For each $j\in \{1,\dots,m-1\}$, let $A_j := \{g_i^jz_i\}_{i=1}^n$ denote the matrix associated to $K_j\in \Pi_Q$, i.e.,\ $K_j=\cap_i V_{g_i^j z_i}$. Then for each $i\in \{1,\dots, n\}$, it holds that
\[
\begin{aligned}
  \llangle [x],[z_i]\rrangle - \llangle [y],[z_i]\rrangle = \sum_{j=1}^{m-1} \Big(\llangle [c_j],[z_i]\rrangle - \llangle [c_{j+1}],[z_i]\rrangle \Big)= \sum_{j=1}^{m-1}\Big( \langle c_j, g_i^jz_i\rangle - \langle c_{j+1},g_i^jz_i\rangle\Big).
\end{aligned}
\]
By concatenating over $i\in \{1,\dots, n\}$ and using $d([x],[y]) = \|x-y\|\neq 0$, it follows that
\[\frac{\Phi([x]) - \Phi([y])}{d([x],[y])} = \sum_{j=1}^{m-1}\frac{\|c_{j}-c_{j+1}\|}{\|x-y\|} A_j^\top\frac{c_j-c_{j+1}}{\|c_j-c_{j+1}\|}.\]
By taking norms, applying the triangle inequality, and using $\|x-y\| = \sum_{j=1}^{m-1} \|c_{j}-c_{j+1}\|$, it follows that
\[\frac{\|\Phi([x]) - \Phi([y])\|}{d([x],[y])} \leq \max_{1\leq j\leq m} \|A_j\|_{2\to 2} \leq \max_{\substack{g_1,\ldots,g_n\in G\\ \cap_i V_{g_iz_i}\neq \varnothing}}\|\{g_iz_i\}_{i=1}^n\|_{2\to2}.\]
To establish equality, take $h_1,\dots,h_n\in G$ that maximizes the right-hand side. Let $A := \{h_iz_i\}_{i=1}^n$, and select a unit vector $u$ in such a way that $\|A^\top u\|= \|A^\top\|_{2\to 2} = \|A\|_{2\to 2}$. Take any $x\in \cap_i V_{h_iz_i}\neq \varnothing$. Since $\cap_i V_{h_iz_i} \cap V_x$ is open and contains $x$, we may take $y \in \cap_i V_{h_iz_i}\cap V_x$ in such a way that $\frac{x-y}{\|x-y\|} = u$. Then $d([x],[y]) = \|x-y\|$, and we obtain
\[\frac{\|\Phi([x]) - \Phi([y])\|}{d([x],[y])} = \|A^\top u\| = \|A\|_{2\to 2}.\]
\end{proof}

As an illustration, we compute the optimal upper Lipschitz bound for a simple example.

\begin{example}
  \label{ex.upper bound of rotation}
  Take $V=\mathbb R^2$ and suppose that $G\leq \operatorname O(2)$ consists of rotations by multiples of $\frac{2\pi}{3}$ radians, generated by a counter-clockwise rotation $g$. Take $z_1 = (1,0)$ and $z_2 = (\frac{1}{2}, \frac{\sqrt 3}{2})$. Then $Q_{z_1}\cap Q_{z_2}$ has six connected components with two kinds of associated matrices: $A_j := g^j[z_1,z_2]$ and $B_j := g^j[z_1,g^{-1}z_2]$, where $j\in \{0,1,2\}$ and $[a,b]$ denotes the matrix whose columns are given by $a,b\in\mathbb R^2$. Then
  \[\|A_j\|_{2\to 2} = \left\|\begin{bmatrix}1 &  \frac{1}{2} \\ 0 & \frac{\sqrt 3}{2}\end{bmatrix}\right\|_{2\to 2} = \sqrt{\frac{3}{2}} = \left\|\begin{bmatrix}1 &  \frac{1}{2} \\ 0 & -\frac{\sqrt 3}{2}\end{bmatrix}\right\|_{2\to 2} = \|B_j\|_{2\to 2}.\]
  By Theorem~\ref{thm.upper lipschitz bound finite}, the max filter bank $\Phi\colon\mathbb{R}^d/G\to\mathbb{R}^n$ defined by $\Phi([x]):=\{\llangle [z_i],[x]\rrangle\}_{i=1}^2$ has optimal upper Lipschitz bound $\sqrt{3/2}$.
\end{example}


\subsection{Compact Groups}

In this subsection, we derive an upper Lipschitz bound for max filter banks when $G$ is compact and not necessarily finite. The bound is optimal whenever $-\operatorname{id}\in G$.

\begin{theorem}
\label{thm.upper lipschitz bound}
Given $G\leq\operatorname{O}(d)$ with closed orbits and $z_1,\ldots,z_n\in\mathbb{R}^d$, the max filter bank $\Phi\colon\mathbb{R}^d/G\to\mathbb{R}^n$ defined by $\Phi([x]):=\{\llangle [z_i],[x]\rrangle\}_{i=1}^n$ satisfies
\[
\sup_{\substack{[x],[y]\in\mathbb{R}^d/G\\{[x]\neq[y]}}}\frac{\|\Phi([x])-\Phi([y])\|}{d([x],[y])}
\leq\max_{g_1,\ldots,g_n\in G}\|\{g_iz_i\}_{i=1}^n\|_{2\to2}.
\]
\textcolor{black}{Furthermore, equality holds if $-\operatorname{id}\in G$.}
\end{theorem}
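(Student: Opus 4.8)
The plan is to prove the upper bound first, then address the furthermore claim. For the inequality, I would fix $[x],[y]$ with $[x]\neq[y]$ and choose representatives $x,y\in\mathbb{R}^d$ with $\|x-y\|=d([x],[y])$; such representatives exist because the orbits are closed and hence compact (by Proposition~\ref{prop.closed subgroups}, $G$ is compact since it is a closed subgroup of the compact group $\operatorname{O}(d)$). For each $i$, pick $g_i\in G$ achieving $\llangle [z_i],[x]\rrangle=\langle g_iz_i,x\rangle$. Then since $\llangle [z_i],[y]\rrangle\geq\langle g_iz_i,y\rangle$ by definition of the supremum, we get
\[
\llangle [z_i],[x]\rrangle-\llangle [z_i],[y]\rrangle\leq\langle g_iz_i,x-y\rangle.
\]
Symmetrically, choosing $g_i'$ achieving the max at $[y]$ gives $\llangle [z_i],[y]\rrangle-\llangle [z_i],[x]\rrangle\leq\langle g_i'z_i,y-x\rangle$, so in absolute value each coordinate of $\Phi([x])-\Phi([y])$ is bounded by $\max_{g\in G}|\langle gz_i,x-y\rangle|$. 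Concatenating over $i$ and writing $w:=x-y$, I would bound $\|\Phi([x])-\Phi([y])\|^2\leq\sum_i\max_{g_i\in G}\langle g_iz_i,w\rangle^2$; the right-hand side is at most $\max_{g_1,\dots,g_n\in G}\sum_i\langle g_iz_i,w\rangle^2=\max_{g_1,\dots,g_n}\|\{g_iz_i\}^\top w\|^2\leq\max_{g_1,\dots,g_n}\|\{g_iz_i\}_{i=1}^n\|_{2\to2}^2\,\|w\|^2$, and dividing by $\|w\|^2=d([x],[y])^2$ finishes the bound. One small subtlety is justifying that the maxima over $g_i\in G$ are attained rather than merely approached — this follows from compactness of $G$ and continuity of $g\mapsto\langle gz_i,w\rangle$.

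For the furthermore claim, suppose $-\operatorname{id}\in G$. I would fix $h_1,\dots,h_n\in G$ attaining $\max_{g_1,\dots,g_n}\|\{g_iz_i\}_{i=1}^n\|_{2\to2}$, set $A:=\{h_iz_i\}_{i=1}^n$, and pick a unit vector $u$ with $\|A^\top u\|=\|A\|_{2\to2}$. The idea is to choose $x=tu$ and $y=-tu$ for small $t>0$. Then $x-y=2tu$, and for small enough $t$ the alignment of $[z_i]$ with $x$ is achieved by $h_iz_i$: indeed $\langle h_iz_i,tu\rangle$ should be the maximizer of $\langle gz_i,u\rangle$ over $g\in G$ — this requires a preliminary reduction showing one may assume, without loss of generality, that each $h_i$ already maximizes $\langle gz_i,u\rangle$ over $g\in G$ (if not, replacing $h_i$ by a true maximizer only increases $\|A^\top u\|$, and the maximality of $A$ forces equality, so the maximizer choice is consistent). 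Granting this, $\llangle [z_i],[x]\rrangle=t\langle h_iz_i,u\rangle$. The point of $-\operatorname{id}\in G$ is that $[y]=[-x]=-[x]$ in the sense that $-h_iz_i\in[z_i]$, so $\llangle [z_i],[y]\rrangle=\sup_g\langle gz_i,-tu\rangle=t\sup_g\langle -gz_i,u\rangle=t\langle h_iz_i,u\rangle$ as well — wait, that would make the difference zero, so instead I should use $\llangle[z_i],[y]\rrangle=\llangle[z_i],[-x]\rrangle$ and the correct computation: $\llangle[z_i],[-x]\rrangle=\sup_g\langle gz_i,-x\rangle=\sup_g\langle -gz_i,x\rangle=\sup_{g}\langle gz_i,x\rangle=\llangle[z_i],[x]\rrangle$ again, which is wrong. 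Let me reconsider: the right construction is $x=c+tu$, $y=c-tu$ for a generic fixed $c$, or more cleanly, reuse the argument structure of Theorem~\ref{thm.upper lipschitz bound finite} — but without finiteness the Voronoi cells may be lower-dimensional. The cleanest fix when $-\operatorname{id}\in G$: take $y=-x$, so $d([x],[y])=d([x],[-x])$, and compute $\|\Phi([x])-\Phi([-x])\|$ directly; here $\llangle[z_i],[-x]\rrangle=\sup_g\langle gz_i,-x\rangle=\llangle[z_i],[x]\rrangle$ only if $[z_i]=[-z_i]$, which holds precisely because $-\operatorname{id}\in G$, making $\Phi([x])=\Phi([-x])$ — so that fails too.

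The genuine construction, and the step I expect to be the main obstacle, is the following: when $-\operatorname{id}\in G$, I want $[x]$ and $[y]$ close with $x-y$ pointing along $u$ and with $h_iz_i$ simultaneously the closest point of $[z_i]$ to both $x$ and $y$. Take $x:=Ru+tu$ and $y:=Ru-tu$ where $R$ is large and $t$ small, after first rotating coordinates so that $u=e_1$ and arranging (using $-\operatorname{id}\in G$ to symmetrize) that for the shifted base point $Ru$, each $h_iz_i$ strictly maximizes $\langle gz_i,Ru\rangle$ — this is where one shows the relevant inner products are in "general position" for generic large $R$, relying on the structure that $-\operatorname{id}\in G$ guarantees the orbit $[z_i]$ spans enough directions; a continuity/perturbation argument then keeps $h_iz_i$ optimal at both $x$ and $y$ and forces $\|x-y\|=d([x],[y])$ for $t$ small. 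Then $\llangle[z_i],[x]\rrangle-\llangle[z_i],[y]\rrangle=\langle h_iz_i,x-y\rangle=2t\langle h_iz_i,u\rangle$, giving $\|\Phi([x])-\Phi([y])\|=2t\|A^\top u\|=\|A\|_{2\to2}\,d([x],[y])$, matching the upper bound. I would want to double-check that this base-point argument does not secretly need finiteness; if it does, the fallback is to invoke Theorem~\ref{thm.upper lipschitz bound finite} applied to a suitable finite subgroup or to argue by approximation, but I expect the direct perturbation argument to go through for any closed $G$ with $-\operatorname{id}\in G$.
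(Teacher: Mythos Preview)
Your upper-bound argument is correct and in fact cleaner than the paper's. By fixing representatives with $\|x-y\|=d([x],[y])$ at the outset, you avoid the paper's detour through the auxiliary map $h(r)$ and the minimization over $r\in G$; the one-sided estimates $\llangle[z_i],[x]\rrangle-\llangle[z_i],[y]\rrangle\le\langle g_iz_i,x-y\rangle$ (and symmetrically) give $|\Phi([x])_i-\Phi([y])_i|\le\max_{g\in G}|\langle gz_i,x-y\rangle|$, and summing the squares yields the bound directly.

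Your treatment of the equality case, however, is muddled and contains a genuine gap. You correctly identify that one should pick $h_1,\dots,h_n$ achieving the maximum spectral norm, set $A=\{h_iz_i\}$, and take a top left-singular vector $u$; and you correctly note (using $-\operatorname{id}\in G$) that one may assume each $h_i$ maximizes $\langle gz_i,u\rangle$ over $g\in G$. But you then worry about strict maximizers, genericity of base points, and perturbation stability, and you leave the argument unfinished. None of those worries are real: your choices $x=(R+t)u$ and $y=(R-t)u$ are both \emph{positive scalar multiples of $u$}, so $\llangle[z_i],[x]\rrangle=(R+t)\langle h_iz_i,u\rangle$ and $\llangle[z_i],[y]\rrangle=(R-t)\langle h_iz_i,u\rangle$ immediately, with no perturbation argument needed; and $d([x],[y])=2t$ because $\langle u,gu\rangle\le\|u\|^2$ for all $g$ by Cauchy--Schwarz. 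So your construction works, just for much simpler reasons than you feared.

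The paper's construction is even simpler: take $x_0=u$ and $y=0$. The key observation is that $-\operatorname{id}\in G$ forces $\llangle[z_i],[x]\rrangle=\max_{g}|\langle gz_i,x\rangle|$, hence $\|\Phi([x])\|^2=\max_{g_1,\dots,g_n}\|\{g_iz_i\}^\top x\|^2$. With $x_0$ the top singular vector of $A$, this gives $\|\Phi([x_0])-\Phi([0])\|=\|A^\top u\|=\|A\|_{2\to2}=\|A\|_{2\to2}\cdot d([x_0],[0])$, and equality follows in one line.
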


Before proceeding with the proof, we note that equality may break in cases where $-\operatorname{id}\notin G$. For example, take the setup and notation as in Example~\ref{ex.upper bound of rotation}. The optimal bound there was found to be $\sqrt{3/2}$. Since $gz_2 = -z_1$, it holds that
\[\max_{g_1,g_2\in G}\|[g_1z_1,g_2z_2]\|_{2\to2} \geq \|[z_1,gz_2]\|_{2\to 2} = \|[z_1,-z_1]\|_{2\to 2} = 2 > \sqrt{3/2}.\]
(We thank Radu Balan for bringing this example to our attention.) The looseness in the bound arises since we are maximizing over a larger set compared to Theorem~\ref{thm.upper lipschitz bound finite}.

\begin{proof}[Proof of Theorem~\ref{thm.upper lipschitz bound}]
By Proposition~\ref{prop.closed subgroups}, we may assume $G$ is closed without loss of generality.
Fix $x,y\in\mathbb{R}^d$, and define
\[
A:=\big\{i\in[n]:\llangle[z_i],[x]\rrangle\geq\llangle[z_i],[y]\rrangle\big\}.
\]
\textcolor{black}{In other words, $A$ partitions indices $i\in [n]$ based on whether $[x]$ or $[y]$ has a higher max filter with $[z_i]$.}
For each $i\in A$, we select $g_i\in G$ in such a way that $\llangle[z_i],[x]\rrangle=\langle g_iz_i,x\rangle$; such a choice exists since $G$ is compact.
Then
\begin{align*}
\llangle[z_i],[x]\rrangle-\llangle[z_i],[y]\rrangle
&=\langle g_iz_i,x\rangle-\max_{r\in G}\langle g_iz_i,ry\rangle\\
&=\min_{r\in G}\langle g_iz_i,x-ry\rangle
=\min_{r\in G}|\langle g_iz_i,x-ry\rangle|,
\end{align*}
where the last equality follows from the left-hand side being nonnegative by the definition of $A$.
For each $i\in [n]\setminus A$, we select $g_i$ in such a way that $\llangle [z_i],[y]\rrangle=\langle g_iz_i,y\rangle$.
Then
\begin{align*}
\llangle[z_i],[y]\rrangle-\llangle[z_i],[x]\rrangle
&=\langle g_iz_i,y\rangle-\max_{r\in G}\langle g_iz_i,rx\rangle\\
&=\min_{r\in G}|\langle g_iz_i,y-rx\rangle|
=\min_{r\in G}|\langle rg_iz_i,x-ry\rangle|,
\end{align*}
where the last step follows from a change of variables $r\mapsto r^{-1}$.
Define $h\colon G\to G^n$ by
\[
h(r)_i:=\left\{\begin{array}{rl}
g_i&\text{if }i\in A\\
rg_i&\text{if }i\in[n]\setminus A.
\end{array}\right.
\]
Notably, $h$ is continuous. 
\textcolor{black}{Roughly, we have turned a difference of max filters into an absolute value minimization over $r\in G$ of an inner product between an element of $[z_i]$, namely ${h(r)}_i z_i$,  and $x-ry$. In particular, $(\min_{r\in G}\left|\langle h(r)_iz_i,x-ry\rangle\right|)^2 = \min_{r\in G}\langle h(r)_iz_i,x-ry\rangle^2$.} We get
\begin{align}
\|\Phi([x])-\Phi([y])\|^2
\nonumber
&=\sum_{i=1}^n\min_{r\in G}\langle h(r)_iz_i,x-ry\rangle^2\\
\nonumber
&\leq\min_{r\in G}\sum_{i=1}^n\langle h(r)_iz_i,x-ry\rangle^2\\
\nonumber
&=\min_{r\in G}\|(\{h(r)_iz_i\}_{i=1}^n)^\top(x-ry)\|^2\\
\nonumber
&\leq\min_{r\in G}\|\{h(r)_iz_i\}_{i=1}^n\|_{2\to2}^2\|x-ry\|^2\\
\nonumber
&\leq\max_{g_1,\ldots,g_n\in G}\|\{g_iz_i\}_{i=1}^n\|_{2\to2}^2\cdot\min_{r\in G}\|x-ry\|^2\\
\label{eq.upper lipschitz bound}
&=\max_{g_1,\ldots,g_n\in G}\|\{g_iz_i\}_{i=1}^n\|_{2\to2}^2 \cdot d([x],[y])^2,
\end{align}
as desired. Now, suppose that $-\operatorname{id}\in G$. Then, $\llangle [z_i],[x]\rrangle^2 = \sup_{g_i\in G}\langle g_iz_i,x\rangle^2$ for all $x\in \mathbb R^d$ and $i\in [n]$. It follows that
\begin{equation}
  \label{eq.upper equality I}
  \|\Phi([x])\|^2 = \sup_{g_1,\ldots,g_n\in G}\sum_{i=1}^n\langle g_iz_i,x\rangle^2\leq \max_{g_1,\ldots,g_n\in G}\|\{g_iz_i\}_{i=1}^n\|_{2\to2}^2 \cdot \|x\|^2.
\end{equation}
Select $g_1,\ldots,g_n\in G$ in such a way that $Z:=\{g_iz_i\}_{i=1}^n$ has maximum spectral norm, and then take $x_0$ to be a top left-singular vector of $Z$.
\textcolor{black}{Then $\|\Phi([x_0])\|=\|Z\|_{2\to2} \cdot \|x_0\|$ since by \eqref{eq.upper equality I}, it holds that
  \[\|Z\|_{2\to2} \cdot \|x_0\|= \|Z^\top x_0\| \leq \|\Phi([x_0])\| \leq \|Z\|_{2\to2}\cdot \|x_0\|.\]
It follows that}
\begin{align*}
\|\Phi([x_0])-\Phi([0])\|^2 =\max_{g_1,\ldots,g_n\in G}\|\{g_iz_i\}_{i=1}^n\|_{2\to2}^2 \cdot d([x_0],[0])^2,
\end{align*}
i.e., $[x]=[x_0]$ and $[y]=[0]$ together achieve equality in the bound \eqref{eq.upper lipschitz bound}.
\end{proof}

\section{Lower Stability}
\label{sec.lower stability}
\textbf{Throughout this section, we assume $G$ is finite.} \textcolor{black}{In the first subsection, we construct a lower Lipschitz bound $\alpha$ for every max filter bank, and we prove its optimality for many examples of $G$. In the second subsection, we give a weaker bound $\tilde \alpha \leq \alpha$ that is more theoretically accessible, and we use it to estimate the distortion of max filter banks with random templates.}

\subsection{Sharp lower Lipschitz bound}
\label{subsec.sharp lower bound}
We construct $\alpha=\alpha(\{z_i\}_{i=1}^n,G)$ that serves as a lower Lipschitz bound:
  \begin{equation}
    \label{eq.alpha bound aim}
\inf_{\substack{[x],[y]\in\mathbb{R}^d/G\\{[x]\neq[y]}}}\frac{\|\Phi([x])-\Phi([y])\|}{d([x],[y])}
\geq\alpha.
  \end{equation}
We warm up with the most trivial example.
\begin{example}
  \label{ex.trivial case}
\textcolor{black}{Suppose $G=\{\operatorname{id}_V\}$. Then the max filter map is bilinear, $\llangle [x], [z_i]\rrangle = \langle x,z_i\rangle$, and the orbital distance is Euclidean, $d([x],[y])=\| x-y\|$. In this case, we may compute}
\begin{equation}
  \label{eq.trivial lower bound}
  \frac{\|\Phi([x])-\Phi([y])\|^2}{d([x],[y])^2}
=\sum_{i=1}^n\frac{\big(\langle z_i,x\rangle-\langle z_i,y\rangle\big)^2}{\|x-y\|^2}
=\sum_{i=1}^n\Big\langle z_i,\frac{x-y}{\|x-y\|}\Big\rangle^2
\geq \lambda_{\min}\Big(\sum_{i=1}^n z_iz_i^\top\Big),
\end{equation}
where the last step follows from the inequality $u^\top Au \geq \lambda_{\min}(A)$, which holds for any unit vector $u$ and symmetric matrix $A\in \mathbb R^{d\times d}$.
\textcolor{black}{As such, to obtain a positive lower Lipschitz bound, we only need to ensure that the positive semidefinite matrix $\sum_{i=1}^n z_iz_i^\top$ is invertible. This occurs if and only if $\{z_i\}_{i=1}^n$ spans $\mathbb R^d$. This in turns holds only if $n\geq d$, in which case generic $\{z_i\}_{i=1}^n$ suffice.}
\end{example}  

In the above example, the first equality in~\eqref{eq.trivial lower bound} leveraged bilinearity of the max filtering map when $G$ is trivial. While the max filtering map fails to be bilinear for nontrivial $G$, using the Voronoi cell decomposition introduced in Section~\ref{sec.upper stability} allows one to leverage linearity in the analysis. Hereinafter, we shall refer to the sets $V_x$, $\overline V_x$, and $Q_x$ characterized by \eqref{eq.open voronoi characterization}, \eqref{eq.closed voronoi characterization}, and \eqref{eq.disjoint union of cones}, respectively. In what follows, we motivate with some intuition that we make rigorous later.

First, we remark that by the continuity of the left-hand quotient of~\eqref{eq.alpha bound aim}, one may equivalently take the infimum over $x$ and $y$ in an open and dense subset $\mathcal{O}$ of $\mathbb{R}^d$.
We will find that for an appropriate choice of ``nice points'' $\mathcal{O}$, this restriction simplifies the analysis.

Take templates $z_1,\dots, z_n\in \mathbb R^d$ and $x\in \cap_i Q_{z_i}$. By the definition of $Q_{z_i}$, it holds that $x\in \cap_i V_{v_i(x)}$ for a unique choice of $v_i(x)\in [z_i]$. This implies that $v_i(x)$ is the unique element in $[z_i]$ that satisfies $\llangle [x],[z_i]\rrangle = \langle x,v_i(x)\rangle$. Stated differently, $\overline V_x \cap [z_i] = \{v_i(x)\}$. By virtue of this uniqueness, we shall include the open dense set $\cap_i Q_{z_i}$ in the definition of $\mathcal O$.

Next, we introduce linearity in our analysis by noting that $\llangle [y],[z_i]\rrangle = \langle w_i,v_i(x)\rangle$ for some $w_i \in [y]$. This defines a (choice) function $f\colon \{1,\dots,n\}\to [y]$ such that $f(i)=w_i$, and we denote by $\mathcal F(x,y)$ the space of such choice functions. In loose terms, we have thus far brought $z_i$ as close as possible to $x\in \cap_i Q_{z_i}$ and named that unique element $v_i(x) \in [z_i]$, and then we brought $y$ as close as possible to $v_i(x)$, potentially non-uniquely, and named that element $f(i)=w_i$. This affords some linearity in the max filtering map:
\[\llangle [x],[z_i]\rrangle-\llangle [y],[z_i]\rrangle = \langle v_i(x), x- f(i)\rangle.\] 
By partitioning over the preimages of $f$, we can replicate the analysis in~\eqref{eq.trivial lower bound}:
\[\|\Phi([x])-\Phi([y])\|^2
=\sum_{w\in [y]}\sum_{i\in f^{-1}(w)}\langle v_i(x),x-w\rangle^2\geq \sum_{w\in [y]} \lambda_{\min}\bigg(\sum_{i\in f^{-1}(w)}v_i(x)v_i(x)^\top\bigg)\cdot\|x-w\|^2.\]
Since $\|x-w\|^2 \geq d([x],[y])^2$, we obtain
\begin{equation}
  \label{eq.informal}
\frac{\|\Phi([x])-\Phi([y])\|}{d([x],[y])} \geq \sqrt{\sum_{w\in [y]} \lambda_{\min}\bigg(\sum_{i\in f^{-1}(w)}v_i(x)v_i(x)^\top\bigg)}.
\end{equation}
By virtue of the Weyl inequality $\lambda_{\min}(A+B)\geq \lambda_{\min}(A) + \lambda_{\min}(B)$ for symmetric matrices $A,B\in \mathbb R^{d\times d}$, we are inclined to minimize the size of $\operatorname{im}(f)\subseteq [y]$ so as to obtain more summand terms inside the $\lambda_{\min}$'s, and thus raise the lower bound on the right-hand side.

Since the choice of $f(i)=w_i$ was made in such a way that $v_i(x)\in \overline V_x \cap \overline V_{w_i}$, we are inclined to find a minimal set $S(x,y)$ such that $\overline V_x \subseteq \cup_{w \in S(x,y)}\overline V_{w}$. This way, the codomain of $f$ may be restricted to $S(x,y)$. In fact, the minimality of $S(x,y)$ entails its uniqueness, as we shall see in Lemma~\ref{lem.equivalence Sxy}. For now, we provide said unique set as a definition:

\begin{figure}[t]
  \centering
  \includegraphics[scale=0.75]{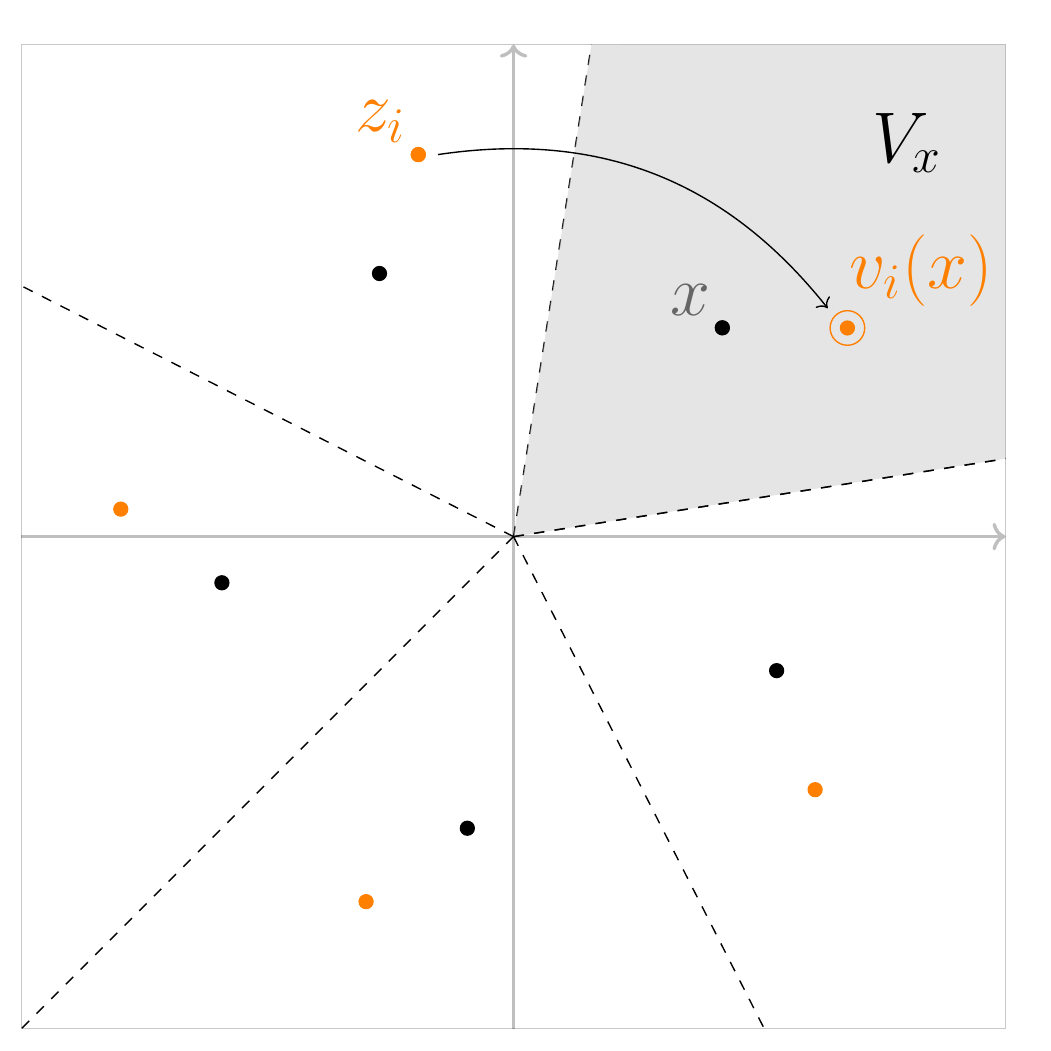}
  \includegraphics[scale=0.75]{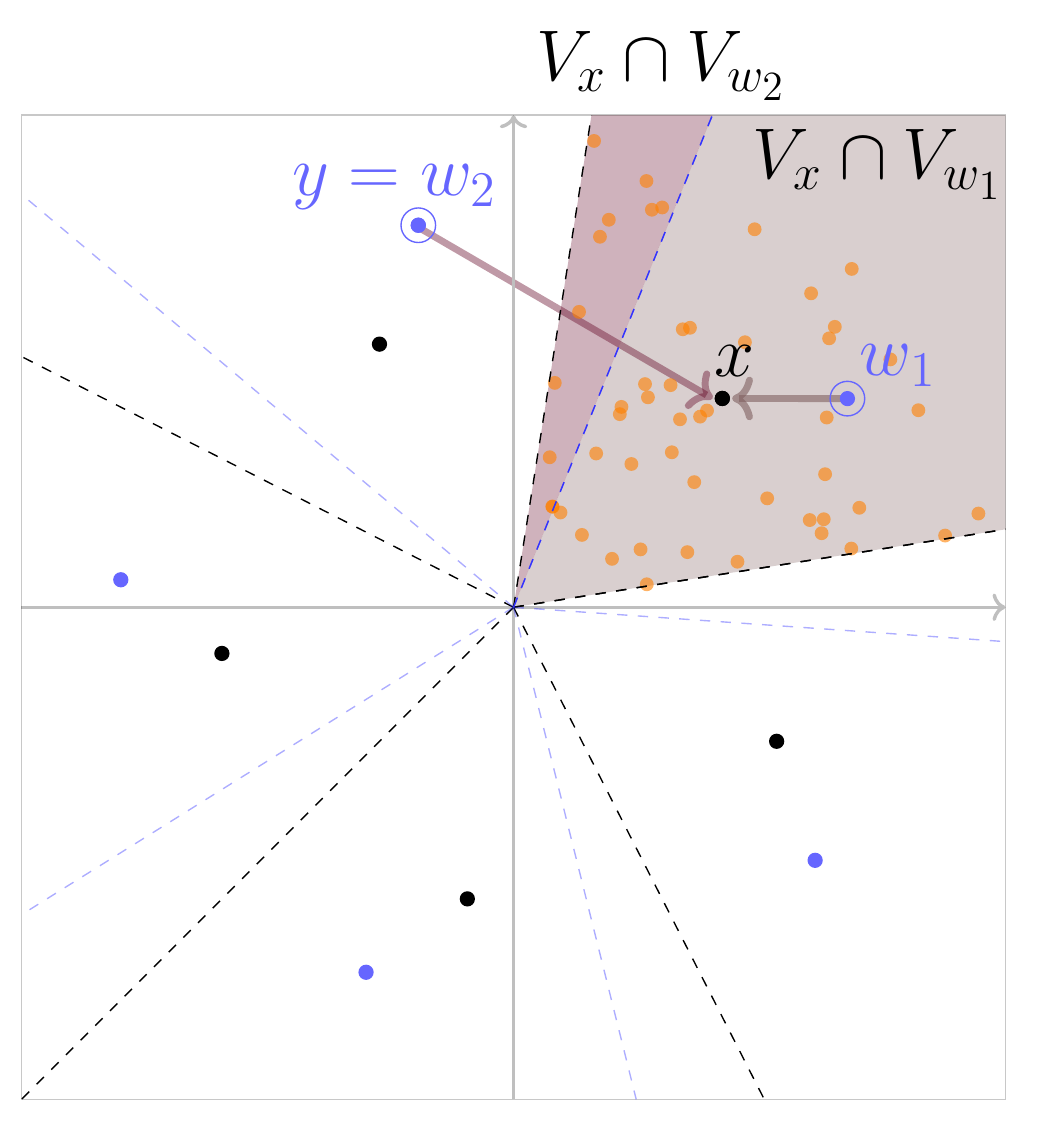}
  \caption{The group $G\leq \operatorname{O}(2)$ consists of rotations by multiples of $\frac{2\pi}{5}$ radians as in Example~\ref{ex.rotation Sxy 2d} with $m=5$. \textbf{(left)} Illustration of Corollary~\ref{cor.choice function}(a). The point $v_i(x)$ is the closest member of $z_i$'s (orange) orbit to $x$. This is the unique closest point since $x\in V_{v_i(x)}\subseteq Q_{z_i}$. \textbf{(right)} Illustration of Definition~\ref{def.S(x,y)} and Corollary~\ref{cor.choice function}. The orange dots in $V_x$ are the images $v_i(x)\in [z_i]$. Here, $S(x,y) = \{w_1,w_2\}$ are the elements of $[y]$ that have nontrivial open Voronoi overlap with $V_x$, i.e., $V_x\cap V_{w_j}\neq \varnothing$. For $f\in \mathcal F(x,y)$, one may choose $f(i) = w_j$ as long as $v_i(x)\in \overline {V_{w_j}}$.}
  \label{fig:Sxy illustration}
\end{figure}

\begin{definition}
  \label{def.S(x,y)}
  Fix a finite group $G\leq\operatorname{O}(d)$.
  For each $x,y\in\mathbb{R}^d$, we define
  \[
  S(x,y):=\{q\in[y]:V_q\cap V_x\neq\varnothing\}.
  \]
\end{definition}

  The set $S(x,y)$ is the set of elements of $[y]$ that have nontrivial open Voronoi cell overlap with $V_x$. For $x,y\in \mathbb R^d$, we claim that we have a covering $ \overline V_x\subseteq \cup_{w\in S(x,y)}\overline V_w$. In fact, we shall see in Lemma~\ref{lem.equivalence Sxy} that this covering is minimal, although we do not need this fact for now. To prove the claim, take $q\in[y]\setminus S(x,y)$. Then, $V_q\cap V_x=\varnothing$ so that $\overline V_q\cap V_x=\varnothing$ by the openness of $V_x$. As such,
  \[
V_x
\subseteq\mathbb{R}^d\setminus\bigg(\bigcup_{q\in[y]\setminus S(x,y)}\overline{V_q}\bigg)
\subseteq\bigcup_{p\in S(x,y)}\overline{V_p},
\]
and the claim follows by taking closures. With that, we may formally summarize the notational portion of our discussion thus far into the following corollary; see Figure~\ref{fig:Sxy illustration} for an illustration.

  \begin{corollary}
    \label{cor.choice function}
    Fix finite $G\leq\operatorname{O}(d)$, $z_1,\ldots,z_n\in\mathbb{R}^d$, $x\in\cap_{i=1}^n Q_{z_i}$, and $y\in\mathbb{R}^d$.
    \begin{itemize}
    \item[(a)]
    For every $i\in\{1,\ldots,n\}$, the set $\arg\max_{p\in[z_i]}\langle p,x\rangle$ consists of a single element $v_i(x)$.
    \item[(b)]
    There is a nonempty set $\mathcal{F}(x,y)$ of choice functions $f\colon\{1,\ldots,n\}\to[y]$ such that $f(i)\in S(x,y)\cap\arg\max_{q\in[y]}\langle q,v_i(x)\rangle$.
    \end{itemize}
    \end{corollary}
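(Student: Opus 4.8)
The plan is to read both parts directly off the Voronoi characterizations~\eqref{eq.open voronoi characterization},~\eqref{eq.closed voronoi characterization}, and~\eqref{eq.disjoint union of cones}, together with the covering $\overline{V_x}\subseteq\bigcup_{w\in S(x,y)}\overline{V_w}$ established immediately before the corollary. No real estimate is involved; the whole content is unwinding definitions, so the only thing to be careful about is invoking each equivalence with the two arguments in the correct roles.

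For (a), since $x\in Q_{z_i}=\bigsqcup_{p\in[z_i]}V_p$ is a disjoint union, $x$ lies in exactly one cell $V_p$, $p\in[z_i]$; equivalently, by~\eqref{eq.disjoint union of cones} applied with $z_i$ in the role of the ``center,'' $|\arg\max_{p\in[z_i]}\langle p,x\rangle|=1$. I would name this unique maximizer $v_i(x)$, which by~\eqref{eq.open voronoi characterization} is the unique point of $[z_i]$ with $x\in V_{v_i(x)}$. I would then record the consequence needed for (b): from $x\in V_{v_i(x)}\subseteq\overline{V_{v_i(x)}}$ and the last equivalence in~\eqref{eq.closed voronoi characterization} (namely $x\in\overline{V_{v_i(x)}}\iff v_i(x)\in\overline{V_x}$), we get $v_i(x)\in\overline{V_x}$.

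For (b), fix $i\in\{1,\dots,n\}$. By part (a) and the covering displayed before the corollary, $v_i(x)\in\overline{V_x}\subseteq\bigcup_{w\in S(x,y)}\overline{V_w}$, so there is some $w\in S(x,y)$ with $v_i(x)\in\overline{V_w}$. Applying~\eqref{eq.closed voronoi characterization} with $w$ in the role of ``$x$'' and $v_i(x)$ in the role of ``$y$,'' the membership $v_i(x)\in\overline{V_w}$ is equivalent to $w\in\arg\max_{q\in[w]}\langle q,v_i(x)\rangle$; since $w\in S(x,y)\subseteq[y]$ we have $[w]=[y]$, so $w\in S(x,y)\cap\arg\max_{q\in[y]}\langle q,v_i(x)\rangle$. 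Setting $f(i):=w$ and doing this for each $i$ independently yields a choice function $f$ of the required type, so $\mathcal{F}(x,y)\neq\varnothing$.

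The ``main obstacle'' is purely bookkeeping: confirming that $v_i(x)$ lands in $\overline{V_x}$ (not merely in $[z_i]$) so that the covering by the $\overline{V_w}$, $w\in S(x,y)$, is applicable, and keeping the two slots of the symmetric relation in~\eqref{eq.closed voronoi characterization} straight. Finiteness of $G$ is used only to guarantee the relevant maxima over the finite orbits $[z_i]$ and $[y]$ are attained.
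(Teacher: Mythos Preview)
Your proof is correct and follows essentially the same approach as the paper, which treats the corollary as an immediate summary of the preceding discussion: part~(a) is read off~\eqref{eq.disjoint union of cones}, and part~(b) comes from the covering $\overline{V_x}\subseteq\bigcup_{w\in S(x,y)}\overline{V_w}$ together with~\eqref{eq.closed voronoi characterization}.
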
 
  
  From the discussion following \eqref{eq.informal}, recall that we aim to minimize $|\operatorname{im}f|\leq |S(x,y)|$ for $x,y\in \cap_i Q_i$ and $f\in \mathcal F(x,y)$. We shall find that $|S(x,y)|$ gives a sharp bound on $|\operatorname{im}f|$ for $x$ and $y$ in an open and dense subset of $\mathbb R^d$. First, we warm up with computational examples of $S(x,y)$; see Figures~\ref{fig:Sxy illustration} and \ref{fig:Sxy sharpness illustration} for an illustration.

  \begin{figure}[t]
    \centering
    \includegraphics[scale=0.6]{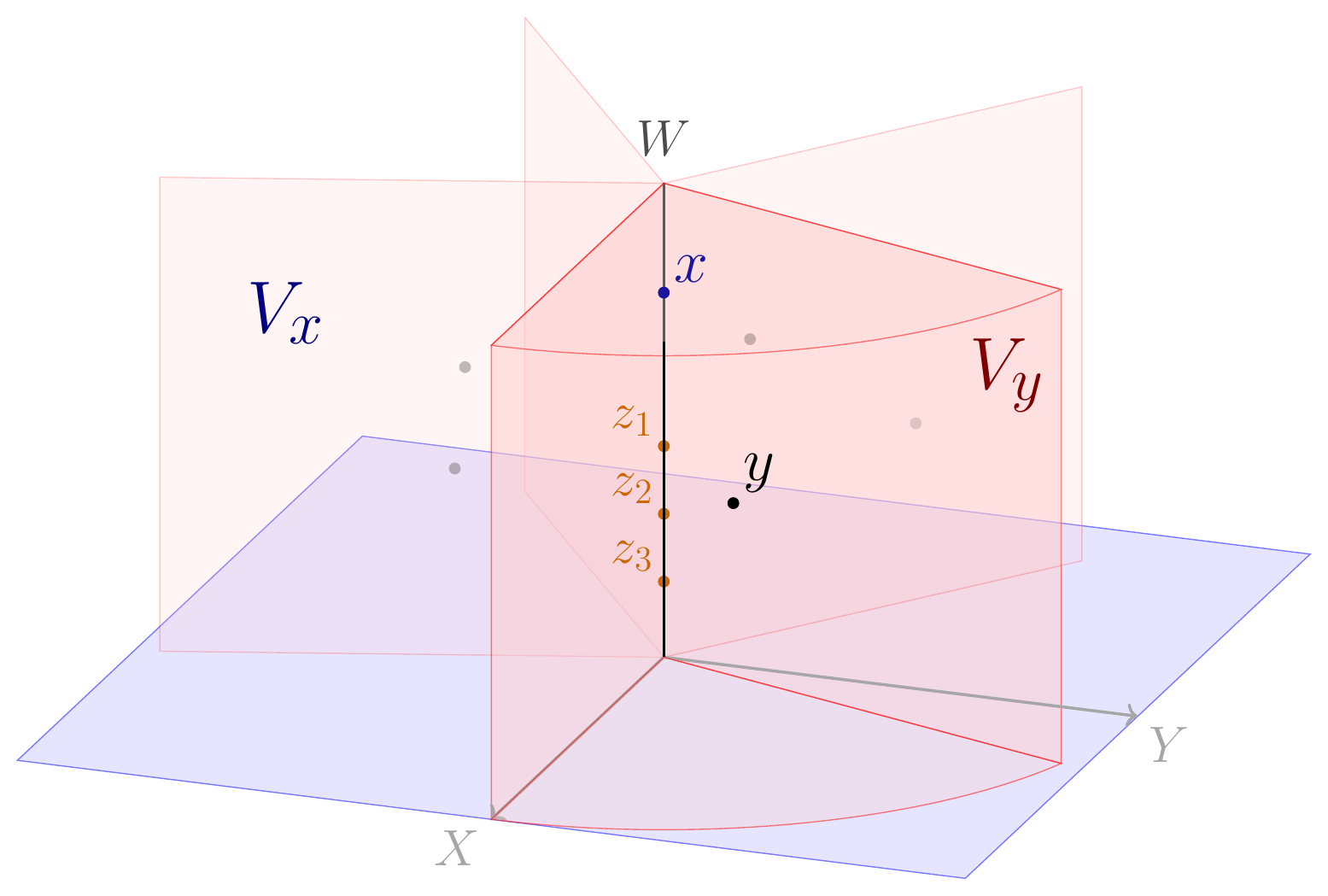}
    \includegraphics[scale=0.6]{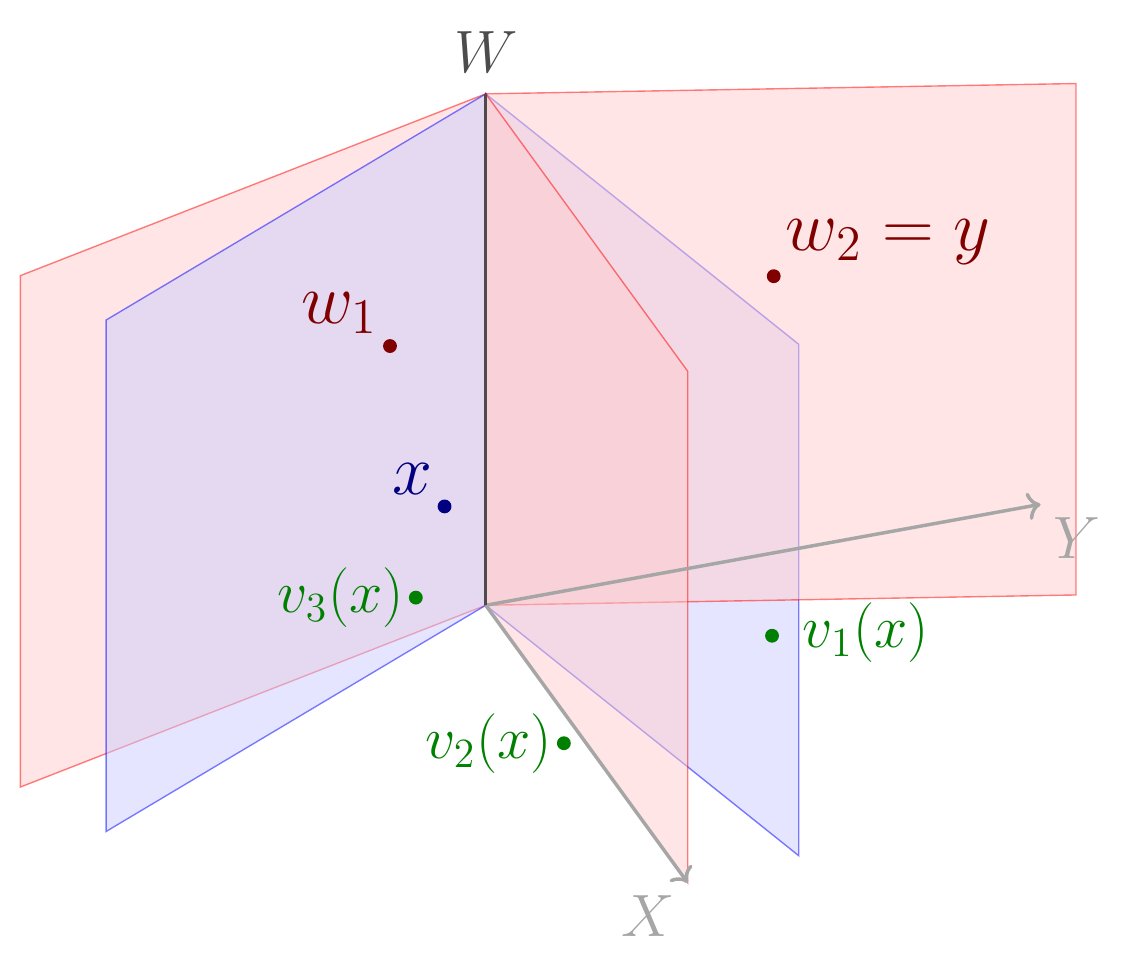}
    \caption{The group consists of rotations by multiples of $\frac{2\pi}{5}$ radians around the $z$-axis as in Example~\ref{ex.rotation Sxy 3d} with $m=5$. Both pictures serve as an illustration for the discussion that follows Example~\ref{ex.rotation Sxy 3d}. \textbf{(left)} The templates $z_1,z_2,z_3$ and the point $x$ lie on the upper half of the $z$-axis $W$ while $y\in W^c$. The Voronoi cell $V_x$ is given by the open upper half-space. While $S([x],[y]) = [y]$, there exists $f\in \mathcal F(x,y)$ such that $f(i)=y$ for each $i\in\{1,2,3\}$.
    \textbf{(right)} In this picture, $x,y\in W^c$ and $f(1) = w_2$ while $f(2)=f(3)=w_1$ so that $|\operatorname{im}(f)|=|S(x,y)|=2$, where $f$ is the singleton element of $\mathcal F(x,y)$.}
    \label{fig:Sxy sharpness illustration}
  \end{figure}

  \begin{example}
    \label{ex.rotation Sxy 2d}
    Suppose that $G\leq \operatorname{O}(2)$ consists of rotations whose angles are multiples of $\frac{2\pi}{m}$ radians, with $m\geq 2$. From Example~\ref{ex.voronoi 2d rot}, recall that $V_0=\mathbb R^2$ and that for $x\neq 0$
    \[
    V_x =
      \{p\in\mathbb R^3: \arg(p) \in (\arg(x)-\pi/m,\arg(x)+\pi/m)\bmod 2\pi\}.
    \]
    As such,
    \begin{alignat*}{3}
     & S(x,y)=\begin{cases}
          [y] & x=0\\
          \{y\} & x \in \mathbb R_{>0}\cdot y\\
          \{w_1(y),w_2(y)\} & x\notin \mathbb R_{\geq 0}\cdot y   
    \end{cases}
    & \hskip 3em &
    \text{ and }
      & \hskip 3em &
    |S(x,y)|=\begin{cases}
      m & x=0\\
      1 &  x \in \mathbb R_{>0}\cdot y\\
      2 & x\notin \mathbb R_{\geq 0}\cdot y
    \end{cases}
\end{alignat*}
  where $w_1(y)$ and $w_2(y)$ are the elements of $[y]$ that are closest and second closest to $x$, respectively.
    \end{example}
    
  \begin{example}
    \label{ex.rotation Sxy 3d}
  Let $e_1,e_2,e_3$ denote an orthonormal basis of $\mathbb R^3$ and put $W := \operatorname{span}\{e_3\}$. Suppose that $G\leq \operatorname{O}(3)$ consists of rotations with axis $W$ and angles that are multiples of $\frac{2\pi}{m}$ radians, with $m\geq 2$. Then the orthogonal projection $P$ of $\mathbb R^3$ onto $W^\perp$ intertwines the action of $G$ with the action of its restriction $G|_{W^\perp}\leq \operatorname{O}(2)$, and it holds that $V_x=P^{-1}(V_{Px})$ where $V_{Px}$ denotes the open Voronoi cell of $Px$ with respect to the group $G|_{W^\perp}$. As such, $S(x,y) = P^{-1}(S(Px,Py))\cap [y]$ where $S(Px,Py)$ is taken with respect to the group $G|_{W^\perp}$ (cf. Example~\ref{ex.rotation Sxy 2d}). Then considering the previous example, it holds that
  \[|S(x,y)|=\begin{cases}
    m & x\in W\\
    1 &  Px \in \mathbb R_{>0}\cdot Py\\
    2 & Px\notin \mathbb R_{\geq 0}\cdot Py.
  \end{cases}\]
  \end{example}

  Notably in the above example, if $z_i\in W$ for each $i$, $x\in W\subseteq \cap_i Q_{z_i} = \mathbb R^3$ and $y\notin W$, then $v_i(x) = z_i$ and any $f(i)\in S(x,y) = [y]$ satisfies $\llangle [y],[z_i]\rrangle = \langle v_i(x), f(i)\rangle$. In particular, we may take $|\operatorname{im}(f)| = 1 \ll m = |S(x,y)|$ e.g., take $f(i) = y$ for each $i$. This illustrates that the size of the covering $\overline V_x \subseteq \cup_{w\in S(x,y)}\overline V_w$, which is minimal as we shall see in Lemma~\ref{lem.equivalence Sxy}, does not inform the minimal size of $\operatorname{im}(f)$. This is an artifact of $x\in W$ being fixed by nontrivial elements in $G$.
  
  On the other hand, if we take templates $z_i\in W^c$ and $x\in \cap_i Q_{z_i}\subseteq W^c$ such that at least two $v_i(x)$ have different polar angles, then $|\operatorname{im}(f)| = |S(x,y)| = 2$ for any $y\in V_x$ such that $v_1(x)$ and $v_2(x)$ are not both in $\overline V_y$. As such, $|S(x,y)|$ gives an optimal bound for $|\operatorname{im}(f)|$ over an open set of $(x,y)\in W^c\times W^c$. See Figure~\ref{fig:Sxy sharpness illustration} for an illustration.
  
  In fact, this behavior holds for a general group $G$. The analogue of the complement of $W$ is the set of points with trivial stabilizers:
  \begin{definition}
    \label{def.principal}
    Fix a finite group $G\leq \operatorname{O}(d)$. We denote the set of \textbf{principal points} by
  \[
P(G)
:=\Big\{x\in\mathbb{R}^d:\operatorname{stab}_G(x)=\{\operatorname{id}\}\Big\}. 
\]
  \end{definition}

Note that $P(G)$ is open and dense in $\mathbb{R}^d$ since $P(G)^c$ is the (finite) union of $1$-eigenspaces of nonidentity members of $G$; in fact, this holds more generally when $G$ is closed by Theorem~1.32 in~\cite{Meinrenken:03}. The following lemma gives a Voronoi cell characterization of principal points:

\begin{lemma}
  \label{lem.characterization of P(G)}
  The following are equivalent:
  \begin{itemize}
  \item[(a)]
  $x\in P(G)$.
  \item[(b)]
  $G$ acts freely and transitively on $\{V_p\}_{p\in[x]}$.
  \item[(c)]
  $y\in V_x$ implies $x\in V_y$.
  \end{itemize}
  \end{lemma}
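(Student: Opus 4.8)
The plan is to prove the equivalences (a)$\Leftrightarrow$(b)$\Leftrightarrow$(c) in a cycle, say (a)$\Rightarrow$(b)$\Rightarrow$(c)$\Rightarrow$(a), using the characterizations of the open and closed Voronoi cells in \eqref{eq.open voronoi characterization} and \eqref{eq.closed voronoi characterization} together with the equivariance $gV_p = V_{gp}$, which follows immediately from \eqref{eq.open voronoi characterization} since $G$ acts by isometries: $y\in V_p$ iff $\{p\}=\arg\min_{q\in[p]}\|q-y\|$ iff $\{gp\}=\arg\min_{q\in[p]}\|q-gy\|$ iff $gy\in V_{gp}$.

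For (a)$\Rightarrow$(b): the action $g\cdot V_p := V_{gp}$ on $\{V_p\}_{p\in[x]}$ is transitive by definition of the orbit $[x]$. For freeness, suppose $gV_x = V_x$, i.e.\ $V_{gx}=V_x$. Pick any $y\in V_x$ (nonempty since $x\in V_x$, as $x$ trivially minimizes $\|p-x\|$ over $[x]$). Then $y$ has a \emph{unique} closest point in $[x]$, and that point is simultaneously $x$ (from $y\in V_x$) and $gx$ (from $y\in V_{gx}$), so $gx=x$; since $x\in P(G)$ this forces $g=\operatorname{id}$. For (b)$\Rightarrow$(c): suppose $y\in V_x$; by \eqref{eq.closed voronoi characterization} we have $x\in\overline{V_y}$, so $x\in\overline{V_{gx}}$ for some $g\in G$ (as $[y]\cap\overline{V_x}$ is nonempty, or more directly $x$ lies in some closed Voronoi cell of the orbit $[y]$... here I need to be a bit careful and instead argue: $y\in V_x\subseteq Q_x$, so $y$ has a unique closest point $p\in[x]$, namely $p=x$; on the other hand the cells $\{V_q\}_{q\in[y]}$ are the orbit of $V_y$, and by freeness and transitivity the stabilizer of $V_y$ is trivial, so the map $[y]\to\{V_q\}$, $q\mapsto V_q$ is a bijection and $\operatorname{stab}_G(y)$ acts trivially, hence $\operatorname{stab}_G(y)=\{\operatorname{id}\}$, i.e.\ $y\in P(G)$; then by symmetry of \eqref{eq.closed voronoi characterization} and the already-established implication (a)$\Rightarrow$ "$y\in V_p \Rightarrow p\in V_y$" applied with the roles of $x,y$ reversed, we get $x\in V_y$). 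For (c)$\Rightarrow$(a): contrapositive — if $g\in\operatorname{stab}_G(x)$ with $g\neq\operatorname{id}$, then $gx=x$, and I want a point $y\in V_x$ with $x\notin V_y$; take $y\in V_x$ generic, so $gy\neq y$ (possible since the fixed-point set of $g$ is a proper subspace while $V_x$ is open), and then $gy\in V_{gx}=V_x$, so $V_x\ni y$ has two distinct points $y,gy$ of $[y]$ with $x$ equidistant... actually cleaner: $x\in V_y$ would give by equivariance $x=gx\in gV_y=V_{gy}$, so $x\in V_y\cap V_{gy}$ with $y\neq gy$, contradicting that the $V_q$ are pairwise disjoint (part of the $Q_x$ being a disjoint union, \eqref{eq.disjoint union of cones}).

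The main obstacle I expect is getting the bookkeeping right in (b)$\Rightarrow$(c): "freely and transitively on $\{V_p\}$" is a statement about the set of cells, and one must translate it back into a statement about the orbit $[x]$ itself (namely that $p\mapsto V_p$ is injective on $[x]$, equivalently $V_p=V_q\Rightarrow p=q$ for $p,q\in[x]$), and then combine this with the reflexive-type symmetry $x\in\overline{V_y}\Leftrightarrow y\in\overline{V_x}$ from \eqref{eq.closed voronoi characterization} to upgrade "closest point" statements to "unique closest point" statements. Once that injectivity is in hand, everything reduces to chasing the equivalences in \eqref{eq.open voronoi characterization}--\eqref{eq.disjoint union of cones}. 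I would organize the write-up so that the equivariance $gV_p=V_{gp}$ and the disjointness/uniqueness facts are stated up front, after which each of the three implications is a short paragraph.
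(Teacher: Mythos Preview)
Your (a)$\Rightarrow$(b) and (c)$\Rightarrow$(a) are fine; the latter is in fact slightly cleaner than the paper's version, since you avoid invoking the density of $P(G)$ and instead pick $y\in V_x$ outside the fixed subspace of $g$.

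The gap is in (b)$\Rightarrow$(c). After correctly deducing $y\in P(G)$ (which, to be precise, should be argued using the \emph{$x$}-cells: if $gy=y$ then $y\in V_x\cap V_{gx}$, so $V_x=V_{gx}$ by disjointness of $Q_x$, and freeness gives $g=\operatorname{id}$), you then write ``by the already-established implication (a)$\Rightarrow$ `$y\in V_p \Rightarrow p\in V_y$' applied with the roles of $x,y$ reversed.'' But this implication is exactly (a)$\Rightarrow$(c), and in your cycle you have only proved (a)$\Rightarrow$(b). Trying to close the loop via $y\in P(G)\Rightarrow$ (b) holds for $y$ $\Rightarrow$ (c) holds for $y$ just reproduces the same unproven step with $y$ in place of $x$. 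So the argument is circular as written.

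The paper's (b)$\Rightarrow$(c) is direct and does not detour through $y\in P(G)$: given $y\in V_x$, equivariance gives $gy\in V_{gx}$ for every $g\in G$; freeness gives $V_{gx}\neq V_x$ for $g\neq\operatorname{id}$, and since an open cell $V_{gx}$ distinct from $V_x$ cannot meet $\overline{V_x}$ (any point of $V_{gx}$ has $gx$ as its \emph{unique} nearest orbit point, so it cannot have $x$ as a nearest point), one concludes $gy\notin\overline{V_x}$. Thus $\{y\}=[y]\cap\overline{V_x}=\arg\min_{q\in[y]}\|q-x\|$, i.e.\ $x\in V_y$. You can simply replace your (b)$\Rightarrow$(c) paragraph with this argument.
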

  
  We postpone the (technical) proof of the above lemma to Appendix~\ref{sec.proof of principal points}. With the set of principal points in hand, we are able to give equivalent characterizations of $S(x,y)$ in the following lemma, whose (technical) proof can be found in Appendix~\ref{sec.proof of Sxy}.

\begin{lemma}
  \label{lem.equivalence Sxy}
  Fix finite $G\leq\operatorname{O}(d)$, $x,y\in\mathbb{R}^d$, and $T\subseteq[y]$.
  Consider the statements:
  \begin{itemize}
  \item[(a)]
  $T\supseteq S(x,y)$.
  \item[(b)]
  $\bigcup_{p\in T}\overline{V_p}\supseteq \overline{V_x}$.
  \item[(c)]
  For every $z\in\mathbb{R}^d$, there exists $\displaystyle v\in\arg\max_{p\in[z]}\langle p,x\rangle$ such that $\displaystyle T\cap\arg\max_{q\in[y]}\langle q,v\rangle\neq\varnothing$.
  \end{itemize}
  Then (a) $\Leftrightarrow$ (b) $\Rightarrow$ (c), and furthermore, (c) $\Rightarrow$ (b) holds if $x\in P(G)$.
  \end{lemma}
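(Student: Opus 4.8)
The plan is to prove the cycle (a) $\Rightarrow$ (b) $\Rightarrow$ (a), then (b) $\Rightarrow$ (c), and finally (c) $\Rightarrow$ (b) under the extra hypothesis $x\in P(G)$. The workhorse throughout will be the characterizations \eqref{eq.open voronoi characterization} and \eqref{eq.closed voronoi characterization}: namely that $v\in\overline{V_x}$ iff $x\in\overline{V_v}$ iff $v\in\arg\max_{p\in[v]}\langle p,x\rangle$, so that $\arg\max_{p\in[z]}\langle p,x\rangle=[z]\cap\overline{V_x}$ and $\arg\max_{q\in[y]}\langle q,v\rangle=\{p\in[y]:v\in\overline{V_p}\}$; here the maxima are attained because $G$ is finite.

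For (a) $\Rightarrow$ (b): since $T\supseteq S(x,y)$, it suffices to invoke the covering $\overline{V_x}\subseteq\bigcup_{w\in S(x,y)}\overline{V_w}$ already established in the paragraph preceding Corollary~\ref{cor.choice function}. For (b) $\Rightarrow$ (a): given $q\in S(x,y)$, choose $w\in V_q\cap V_x$; then $w\in\overline{V_x}\subseteq\bigcup_{p\in T}\overline{V_p}$, so $w\in\overline{V_p}$ for some $p\in T$, which means $p\in\arg\max_{r\in[y]}\langle r,w\rangle$, while $w\in V_q$ forces $\arg\max_{r\in[y]}\langle r,w\rangle=\{q\}$; hence $p=q\in T$, and as $q$ was arbitrary, $T\supseteq S(x,y)$. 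For (b) $\Rightarrow$ (c): given $z\in\mathbb{R}^d$, pick any $v\in\arg\max_{p\in[z]}\langle p,x\rangle$; then $v\in\overline{V_x}\subseteq\bigcup_{p\in T}\overline{V_p}$, so $v\in\overline{V_p}$ for some $p\in T$, which unwinds to $p\in T\cap\arg\max_{q\in[y]}\langle q,v\rangle$, giving (c).

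The main obstacle is (c) $\Rightarrow$ (b) with $x\in P(G)$, and the point that makes it go through is that the existential quantifier in (c) collapses to a unique witness once one feeds in a point of the \emph{open} cell $V_x$. Concretely, for each $v_0\in V_x$, Lemma~\ref{lem.characterization of P(G)} yields $x\in V_{v_0}$, i.e.\ $\arg\max_{p\in[v_0]}\langle p,x\rangle=\{v_0\}$; applying (c) with $z:=v_0$ then forces $v_0\in T\cap\arg\max_{q\in[y]}\langle q,v_0\rangle$ to be attained at $v_0$, i.e.\ $v_0\in\overline{V_p}$ for some $p\in T$. Hence $V_x\subseteq\bigcup_{p\in T}\overline{V_p}$, and since $T\subseteq[y]$ is finite the right-hand side is closed, so passing to closures gives $\overline{V_x}\subseteq\bigcup_{p\in T}\overline{V_p}$, which is (b). The only care needed is to confirm that $P(G)$ is exactly where this argument can be run—outside $P(G)$, an orbit can meet $\overline{V_x}$ in several points even when it meets $V_x$, and the collapse of the quantifier fails, consistent with the lemma only claiming (c) $\Rightarrow$ (b) under $x\in P(G)$.
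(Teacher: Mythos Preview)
Your proof is correct and follows essentially the same approach as the paper's proof in Appendix~\ref{sec.proof of Sxy}: both hinge on the Voronoi characterizations \eqref{eq.open voronoi characterization}--\eqref{eq.closed voronoi characterization} and on Lemma~\ref{lem.characterization of P(G)} to collapse the existential in (c) when $x\in P(G)$. The only cosmetic difference is that for (b)$\Rightarrow$(a) you argue directly (pick $q\in S(x,y)$, show $q\in T$) whereas the paper argues the contrapositive (pick $q\notin T$, show $q\notin S(x,y)$); the underlying idea is identical.
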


  The first equivalence shows that $S(x,y)$ is the unique minimal index set that yields a covering $\bigcup_{p\in S(x,y)}\overline{V_p}\supseteq \overline{V_x}$. When $x\in P(G)$, the equivalence $(a)\Leftrightarrow (c)$ shows that in the process of bringing some $z\in\mathbb R^d$ towards $\overline V_x$ and calling that version $v\in [z]$, and then bringing $y$ towards $v$ and calling that version $w\in [y]$, we can force $w$ to be any element of $S(x,y)$ for an appropriate choice of $z$, namely $z\in G\cdot (V_x\cap V_w)$.

  This gives a generalization of the discussion following Example~\ref{ex.rotation Sxy 3d}, as we now argue. Suppose that $x,y\in P(G)\cap \bigcap_{i=1}^nQ_{z_i}$ and $S(x,y)=\{w_1,\dots,w_m\}\subseteq [y]$. Choose $z_1,\dots,z_n\in \mathbb R^d$ such that for each $j\in \{1,\dots, m\}$, there exists $i\in \{1,\dots,n\}$ with $z_i\in G\cdot (V_x\cap V_{w_j})$. Then this is an open set of choices provided $n\geq m$, and any function $f\in \mathcal F(x,y)$ satisfies $|\operatorname{im}(f)|= |S(x,y)|$.
  
  Due to this sharpness, we find that $S(x,y)$ is a ``natural'' set with which we shall proceed with our partitioned linear analysis that factors through Corollary~\ref{cor.choice function}, provided we include $P(G)$ in the set of ``nice points'' for which we now give the definition:

  \begin{definition}
    \label{def.nice points}
We define
\[\mathcal{O}(\{z_i\}_{i=1}^n,G):=P(G)\cap \bigcap_{i=1}^nQ_{z_i}.\]

\end{definition}
    
    We are now ready to enunciate our lower Lipschitz bound:
    
    \begin{theorem}
    \label{thm.lower lipschitz bound}
    Given finite $G\leq\operatorname{O}(d)$ and $z_1,\ldots,z_n\in\mathbb{R}^d$, put
    \[
    \alpha(\{z_i\}_{i=1}^n,G)
    :=\inf_{x,y\in\mathcal{O}(\{z_i\}_{i=1}^n,G)}\max_{f\in\mathcal{F}(x,y)}\bigg(\sum_{w\in S(x,y)}\lambda_{\min}\bigg(\sum_{i\in f^{-1}(w)} v_i(x)v_i(x)^\top\bigg)\bigg)^{1/2},
    \]
    where $v_i(x)$ and $\mathcal{F}(x,y)$ are defined as in Corollary~\ref{cor.choice function}.
    The max filter bank $\Phi\colon\mathbb{R}^d/G\to\mathbb{R}^n$ defined by $\Phi([x]):=\{\llangle [z_i],[x]\rrangle\}_{i=1}^n$ satisfies
    \[
    \inf_{\substack{[x],[y]\in\mathbb{R}^d/G\\{[x]\neq[y]}}}\frac{\|\Phi([x])-\Phi([y])\|}{d([x],[y])}
    \geq\alpha(\{z_i\}_{i=1}^n,G).
    \]
    \end{theorem}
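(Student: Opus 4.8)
The plan is to make the informal computation preceding the statement rigorous, using the machinery already in place. First I would reduce the infimum on the left-hand side to an infimum over the ``nice'' set $\mathcal{O}:=\mathcal{O}(\{z_i\}_{i=1}^n,G)=P(G)\cap\bigcap_{i=1}^nQ_{z_i}$. This set is open and dense in $\mathbb{R}^d$, being a finite intersection of open dense sets (each $Q_{z_i}$ is open and dense, and $P(G)$ is open and dense). Since each coordinate $\llangle[z_i],\cdot\rrangle$ of $\Phi$ is a supremum of linear functionals and hence Lipschitz, and $d$ is continuous and strictly positive on $\{([x],[y]):[x]\neq[y]\}$, the quotient $\|\Phi([x])-\Phi([y])\|/d([x],[y])$ is continuous there; approximating arbitrary representatives $x,y$ of a pair $[x]\neq[y]$ by points of $\mathcal{O}$ (which preserves $[x_k]\neq[y_k]$ for large $k$, as $d$ stays bounded away from $0$) shows the left-hand infimum equals the infimum of the same quotient over $x,y\in\mathcal{O}$ with $[x]\neq[y]$. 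It therefore suffices to prove, for every such $x,y$, that
\[
\frac{\|\Phi([x])-\Phi([y])\|}{d([x],[y])}\ \geq\ \max_{f\in\mathcal{F}(x,y)}\Bigg(\sum_{w\in S(x,y)}\lambda_{\min}\bigg(\sum_{i\in f^{-1}(w)}v_i(x)v_i(x)^\top\bigg)\Bigg)^{1/2},
\]
because the right side is at least $\alpha(\{z_i\}_{i=1}^n,G)$: its infimum over all $x,y\in\mathcal{O}$ is $\alpha$, and the extra pairs with $[x]=[y]$ allowed in that infimum contribute only the nonnegative value $\lambda_{\min}(\sum_i v_i(x)v_i(x)^\top)^{1/2}$.

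Next I would fix $x,y\in\mathcal{O}$ with $[x]\neq[y]$ and any $f\in\mathcal{F}(x,y)$, which is nonempty by Corollary~\ref{cor.choice function}(b). The key is the linearization identity: since $v_i(x)=\arg\max_{p\in[z_i]}\langle p,x\rangle$ by Corollary~\ref{cor.choice function}(a) and $f(i)\in\arg\max_{q\in[y]}\langle q,v_i(x)\rangle$ with $v_i(x)\in[z_i]$, we have $\llangle[z_i],[x]\rrangle=\langle v_i(x),x\rangle$ and $\llangle[z_i],[y]\rrangle=\langle v_i(x),f(i)\rangle$, hence
\[
\llangle[z_i],[x]\rrangle-\llangle[z_i],[y]\rrangle=\langle v_i(x),x-f(i)\rangle\qquad\text{for all }i.
\]
Squaring, summing over $i$, and grouping the indices by the value $w=f(i)\in S(x,y)$ turns $\|\Phi([x])-\Phi([y])\|^2$ into $\sum_{w\in S(x,y)}(x-w)^\top M_w(x-w)$ with $M_w:=\sum_{i\in f^{-1}(w)}v_i(x)v_i(x)^\top$. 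Each $M_w$ is positive semidefinite, so $(x-w)^\top M_w(x-w)\geq\lambda_{\min}(M_w)\|x-w\|^2$ with $\lambda_{\min}(M_w)\geq0$, and $\|x-w\|\geq d([x],[y])$ since $w\in[y]$; this lets the common factor $d([x],[y])^2$ come out of the sum. Dividing by $d([x],[y])^2>0$, maximizing over $f\in\mathcal{F}(x,y)$ (the left side does not depend on $f$), and taking square roots gives the displayed inequality, completing the proof.

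The structural heart of the argument --- that for $x\in P(G)\cap\bigcap_iQ_{z_i}$ the point $v_i(x)$ is the unique closest point of $[z_i]$ to $x$, and that $y$ can always be routed toward each $v_i(x)$ within the small set $S(x,y)$ so that $\mathcal{F}(x,y)\neq\varnothing$ --- is precisely what Corollary~\ref{cor.choice function} packages, and that corollary rests on Lemmas~\ref{lem.characterization of P(G)} and~\ref{lem.equivalence Sxy}. Given those, the only points in the present proof that need care are: (i) the continuity-and-density reduction to $\mathcal{O}$; (ii) the observation that the chain of inequalities holds for \emph{every} $f\in\mathcal{F}(x,y)$, so that it survives the maximum over $f$ appearing in the definition of $\alpha$; and (iii) the sign condition $\lambda_{\min}(M_w)\geq0$, which is what allows $d([x],[y])^2$ to be factored out of the sum. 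I expect the main (though modest) obstacle to be the bookkeeping around $\mathcal{F}(x,y)$ and $S(x,y)$ rather than any genuine difficulty.
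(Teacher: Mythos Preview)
Your proof is correct and follows essentially the same approach as the paper: reduce to $x,y\in\mathcal{O}$ by continuity and density, linearize each coordinate via $\llangle[z_i],[x]\rrangle-\llangle[z_i],[y]\rrangle=\langle v_i(x),x-f(i)\rangle$ using Corollary~\ref{cor.choice function}, group by $w=f(i)\in S(x,y)$, and apply the eigenvalue bound together with $\|x-w\|\geq d([x],[y])$. The paper selects the maximizing $f$ at the outset while you carry an arbitrary $f$ and maximize at the end, but this is only a cosmetic difference.
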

    \begin{proof}
      It suffices to prove
      \[
      \frac{\|\Phi([x])-\Phi([y])\|}{d([x],[y])}
      \geq\alpha(\{z_i\}_{i=1}^n,G)
      \]
      for all $x,y\in \mathcal{O}(\{z_i\}_{i=1}^n,G)$ with $[x]\neq[y]$; this follows from the continuity of the left-hand quotient and the density of $\mathcal{O}(\{z_i\}_{i=1}^n,G)$ in $\mathbb{R}^d$.
      As such, we fix $x,y\in \mathcal{O}(\{z_i\}_{i=1}^n,G)$ with $[x]\neq[y]$, and we select $f\in\mathcal{F}(x,y)$ that maximizes
      \[
      \sum_{w\in S(x,y)}\lambda_{\min}\bigg(\sum_{i\in f^{-1}(w)} v_i(x)v_i(x)^\top\bigg).
      \]
      By Corollary~\ref{cor.choice function}, we have $\llangle [z_i],[x]\rrangle=\langle v_i(x),x\rangle$ and $\llangle [z_i],[y]\rrangle=\langle v_i(x),f(i)\rangle$, and so
      \begin{align*}
      \|\Phi([x])-\Phi([y])\|^2
      &=\sum_{i=1}^n\big(\langle v_i(x),x\rangle-\langle v_i(x),f(i)\rangle\big)^2\\
      &=\sum_{w\in S(x,y)}\sum_{i\in f^{-1}(w)}\langle v_i(x),x-w\rangle^2\\
      &=\sum_{w\in S(x,y)}\|(\{v_i(x)\}_{i\in f^{-1}(w)})^\top(x-w)\|^2\\
      &\geq\sum_{w\in S(x,y)}\lambda_{\min}\bigg(\sum_{i\in f^{-1}(w)}v_i(x)v_i(x)^\top\bigg)\cdot\|x-w\|^2\\
      &\geq \alpha(\{z_i\}_{i=1}^n,G)^2\cdot d([x],[y])^2,
      \end{align*}
      as desired.
      \end{proof}
      
      We conclude this subsection by noting that Theorem~\ref{thm.lower lipschitz bound} is often optimal, though the proof of this fact is a bit long and technical and can be found in Appendix~\ref{sec.proof of optimal lower lipschitz}.
      
      \begin{lemma}
      \label{lem.lower lipschitz optimal cases}
      Consider any $d\in\mathbb{N}$ and $G\leq\operatorname{O}(d)$ such that
      \begin{itemize}
      \item[(a)]
      $d\leq 2$,
      \item[(b)]
      $G=\{\pm\operatorname{id}\}$, or
      \item[(c)]
      $G$ is a reflection group.
      \end{itemize}
      Then for every $z_1,\ldots,z_n\in\mathbb{R}^d$, there exist $x,y\in\mathbb{R}^d$ with $[x]\neq[y]$ such that the corresponding max filter bank $\Phi\colon\mathbb{R}^d/G\to\mathbb{R}^n$ satisfies
      \[
      \|\Phi([x])-\Phi([y])\|
      =\alpha(\{z_i\}_{i=1}^n,G)\cdot d([x],[y]),
      \]
      where $\alpha(\{z_i\}_{i=1}^n,G)$ is defined in Theorem~\ref{thm.lower lipschitz bound}.
      \end{lemma}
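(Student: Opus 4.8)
The plan is to run the chain of inequalities in the proof of Theorem~\ref{thm.lower lipschitz bound} backwards. Recall that for $x,y\in\mathcal{O}:=\mathcal{O}(\{z_i\}_{i=1}^n,G)$ and any $f\in\mathcal{F}(x,y)$ one has the exact identity
\[
\|\Phi([x])-\Phi([y])\|^2=\sum_{w\in S(x,y)}(x-w)^\top M_w^f(x-w),\qquad M_w^f:=\sum_{i\in f^{-1}(w)}v_i(x)v_i(x)^\top,
\]
after which two bounds are applied, $(x-w)^\top M_w^f(x-w)\geq\lambda_{\min}(M_w^f)\|x-w\|^2$ and $\|x-w\|\geq d([x],[y])$, together with a passage to the $f$ maximizing $\sum_w\lambda_{\min}(M_w^f)$. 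So it suffices to exhibit $x_0,y_0$ with $[x_0]\neq[y_0]$ and a choice function $f_0\in\mathcal{F}(x_0,y_0)$ attaining $\alpha^2=\sum_w\lambda_{\min}(M_w^{f_0})$ such that, for every $w$ with $f_0^{-1}(w)\neq\varnothing$: (i) $x_0-w$ lies in the bottom eigenspace of $M_w^{f_0}$, and (ii) $w$ is a nearest point of $[y_0]$ to $x_0$. Then every inequality above becomes an equality and $\|\Phi([x_0])-\Phi([y_0])\|=\alpha\cdot d([x_0],[y_0])$.

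First I would show the infimum defining $\alpha$ is a minimum. The quantity $\max_{f\in\mathcal{F}(x,y)}\big(\sum_w\lambda_{\min}(M_w^f)\big)^{1/2}$ depends on $(x,y)$ only through finitely many pieces of combinatorial data: the cell of $Q_{z_i}$ containing $x$ (which fixes $v_i(x)$), the cell $V_x$, and the incidences determining $S(x,y)$ and the sets $\arg\max_{q\in[y]}\langle q,v_i(x)\rangle$ (which fix $\mathcal{F}(x,y)$). In each of the three families this data takes only finitely many values on $\mathcal{O}\times\mathcal{O}$: when $G$ is a reflection group, $V_x$ is the open Weyl chamber of $x$ (Example~\ref{ex.voronoi reflection}), so the data is locally constant on $\mathcal{O}$ and moreover $S(x,y)$ is always a singleton; when $G=\{\pm\operatorname{id}\}$ the data is constant on each region cut out by the hyperplanes $z_i^\perp$ together with a choice of nearest antipode; and when $d\le 2$ one reads it off from Examples~\ref{ex.voronoi 2d rot}--\ref{ex.voronoi reflection}. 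Thus $\alpha$ is the minimum of a finite list of values; fix a minimizing ``stratum'' $\Sigma$, the associated constants $v_i$ and set $S$, and a maximizing $f_0$, so that $\sum_{w\in S}\lambda_{\min}(M_w^{f_0})=\alpha^2$ throughout $\Sigma$.

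Next I would build the tight configuration inside (the closure of) $\Sigma$. For a reflection group this is immediate: $S=\{w\}$ is a singleton, so $M_w^{f_0}=\sum_i v_iv_i^\top$, and taking $y_0$ in the minimizing open chamber and $x_0:=y_0+\varepsilon u$ for a unit bottom eigenvector $u$ of $\sum_i v_iv_i^\top$ and small $\varepsilon>0$ keeps $x_0,y_0$ in the same chamber (hence $d([x_0],[y_0])=\|x_0-y_0\|$, $w=y_0$ is nearest, and $f_0$ stays valid), gives $[x_0]\neq[y_0]$, and makes the ratio equal $\sqrt{u^\top(\sum_i v_iv_i^\top)u}=\alpha$. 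For $G=\{\pm\operatorname{id}\}$ one has $|S|\le 2$ with the two candidate elements antipodal, and one seeks $x_0,y_0$ with $x_0-w$ in the relevant bottom eigenspace for each $w$, with $x_0$ on the perpendicular bisector of $w$ and $-w$ when both occur (which simultaneously yields equidistance and that both are nearest), and with the sign pattern $(\operatorname{sign}\langle z_i,x_0\rangle,\operatorname{sign}\langle z_i,y_0\rangle)_i$ matching $\Sigma$; there is enough freedom — rescaling, the choice of eigenvector within a possibly higher-dimensional eigenspace, the choice among minimizing strata, and the option of placing $x_0$ on a ray through $y_0$ when only one antipode is nearest — to satisfy all of these (degeneracies where some $\lambda_{\min}$ or $\alpha$ vanishes are handled separately). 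The case $d\le 2$ with $G$ cyclic is analogous: $S(x_0,y_0)$ again has at most two elements, conditions (i)--(ii) pin $x_0$ down as an explicit affine combination of those elements and the chosen eigenvectors, and the one-parameter sector geometry of Example~\ref{ex.voronoi 2d rot} leaves room to land in the prescribed stratum.

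The main obstacle is precisely this last verification: conditions (i) and (ii) determine $(x_0,y_0)$ only up to a low-dimensional family, and one must check that this family meets a minimizing stratum — equivalently, that $f_0$ stays a valid choice function realizing $\alpha^2$ at $(x_0,y_0)$, that no spurious element enters $S(x_0,y_0)$, and that $x_0,y_0$ can be kept principal and in $\bigcap_i Q_{z_i}$. The three hypotheses are exactly what provide enough slack: reflection groups collapse $S$ to a point and make the stratification locally constant; $\{\pm\operatorname{id}\}$ forces the two possible elements of $S$ to be antipodal, reducing (i)--(ii) to an orthogonality condition; and $d\le 2$ reduces the Voronoi geometry to a one-parameter family of sectors. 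For general finite $G$ the forced configuration can be pushed out of every minimizing stratum, which is why the lemma is stated only in these cases.
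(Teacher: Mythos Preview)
Your treatment of case~(c) is correct and matches the paper's argument. The gap is in cases~(b) and the cyclic subcase of~(a), where you correctly identify the target --- make (i) and (ii) hold simultaneously while keeping $f_0\in\mathcal F(x_0,y_0)$ --- but do not actually verify it. Saying ``there is enough freedom'' and ``analogous'' is not a proof, and in fact the obstacle you name is real: for the natural candidates $(x_0,y_0)$, the partition induced by $\mathcal F(x_0,y_0)$ need not coincide with the minimizing partition that defines $\alpha$.

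The paper handles this differently. For $G=\{\pm\operatorname{id}\}$ it does \emph{not} try to stay in a minimizing stratum; instead it sets $\Delta:=\min_{I\sqcup J=[n]}\big(\lambda_{\min}(\sum_{i\in I}z_iz_i^\top)+\lambda_{\min}(\sum_{i\in J}z_iz_i^\top)\big)^{1/2}$, observes $\alpha\ge\Delta$, takes bottom eigenvectors $u,v$ for a minimizing $(I,J)$, sets $x=u+v$, $y=u-v$, and uses the reverse triangle inequality $\big||\langle z_i,x\rangle|-|\langle z_i,y\rangle|\big|\le|\langle z_i,x\pm y\rangle|$ (with the sign chosen by membership in $I$ or $J$) to get $\|\Phi([x])-\Phi([y])\|\le\Delta\cdot d([x],[y])$. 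Equality is then forced by Theorem~\ref{thm.lower lipschitz bound}. This sidesteps entirely the question of whether $(x,y)$ lies in the right stratum. For the cyclic case with $|G|\ge 3$ in $d=2$, the paper's proof is far from ``analogous'': it uses the trace identity $\lambda_{\min}+\lambda_{\max}=\operatorname{tr}$ (special to $d=2$) to convert the problem to \emph{maximizing} top eigenvalues, shows the optimal partition has exactly two parts generating acute cones (requiring a Fourier--Motzkin argument when $|G|=3$), proves that the bisector of the two top singular vectors separates the cones, and then constructs $x$ as this bisector and $y$ on the \emph{boundary} of $V_x$ at a carefully chosen radius. The chain of inequalities culminating in~\eqref{eq.fourier motzkin results} is precisely what certifies that the constructed $(x,y)$ is compatible with the optimal partition --- the step you flagged as the main obstacle and then left unaddressed.
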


\subsection{Second lower Lipschitz bound and distortion estimates}
\label{subsec.second lower bound and dist}

In this subsection, we use our sharp bound from the previous subsection to state a theoretically more accessible bound which we use to derive easy-to-state sufficient conditions for max filter banks to be bilipschitz.

To motivate, observe that the quantity $\alpha(\{z_i\}_{i=1}^n,G)$ in Theorem~\ref{thm.lower lipschitz bound} is positive if and only if for every $x,y\in \mathcal O(\{z_i\}_{i=1}^n,G)$, there exists $f\in \mathcal F(x,y)$ and $w\in S(x,y)$ such that $\sum_{i\in f^{-1}(w)}v_i(x)v_i(x)^\top$ is invertible, which in turn happens only if $|f^{-1}(w)| \geq d$ (cf.\ Example~\ref{ex.trivial case}). This necessary condition can be achieved via a pigeonhole principle as we shall argue now.

To loosen the bound in Theorem~\ref{thm.lower lipschitz bound}, first note that the pigeonhole principle entails that for every $x,y\in \mathcal O(\{z_i\}_{i=1}^n,G)$ and $f\in \mathcal F(x,y)$, there exists $w_P\in S(x,y)$ such that $|f^{-1}(w_P)|\geq n/|S(x,y)|$. This way, we get
\begin{equation}
  \label{eq.pigeon}
\sum_{w\in S(x,y)}\lambda_{\min}\bigg(\sum_{i\in f^{-1}(w)} v_i(x)v_i(x)^\top\bigg) \geq \lambda_{\min}\bigg(\sum_{i\in f^{-1}(w_P)} v_i(x)v_i(x)^\top\bigg).
\end{equation}

Note that $|S(x,y)|$ is the number of ``holes.'' At worst, this is bounded above by the following complexity parameter: 

\begin{definition}
  \label{def.voronoi characteristic}
The \textbf{Voronoi characteristic} of $G$ is given by
\[
\chi(G):=\max_{x,y\in P(G)}|S(x,y)|,
\]
where  $S(x,y)$ and $P(G)$ are defined in Definitions~\ref{def.S(x,y)} and~\ref{def.principal}, respectively.
\end{definition}
To get a sense for how the Voronoi characteristic scales, we consider the trivial bound $\chi(G)\leq|G|$.
While this bound is saturated by the familiar cases in which $G=\{\operatorname{id}\}$ and $G=\{\pm\operatorname{id}\}$, the bound is often very loose.
For example, we have $\chi(G)=1$ whenever $G$ is a reflection group, and $\chi(G)=2$ whenever $G$ is a nontrivial cyclic subgroup of $\operatorname{O}(2)$.
In some sense, $\chi(G)$ captures the extent to which $\mathbb{R}^d/G$ is noneuclidean.

Now at worst, the right-hand side of~\eqref{eq.pigeon}, i.e.\ the ``pigeon,'' is bounded below by the following:
\begin{definition}
  \label{def.alpha tilde}
Given $z_1,\ldots,z_n\in\mathbb{R}^d$, we define
\[
\tilde\alpha(\{z_i\}_{i=1}^n,G)
:=\min_{\substack{I\subseteq\{1,\ldots,n\}\\|I|\geq n/\chi(G)}}
\min_{\{g_i\}_{i\in I}\in G^I}\bigg(\lambda_{\min}\bigg(\sum_{i\in I}(g_iz_i)(g_iz_i)^\top\bigg)\bigg)^{1/2}.
\]
\end{definition}

Observe that $\tilde \alpha(\{z_i\}_{i=1}^n,G)$ bounds the lowest value of $\lambda_{\min}(\sum_{i\in f^{-1}(w_P)}v_i(x)v_i(x)^\top)$ obtained by choosing the worst case $v_i(x)$ and the worst case $w_P$ guaranteed by the pigeonhole principle to satisfy
\[|f^{-1}(w_P)| \geq n/|S(x,y)|\geq n/\chi(G).\]
From the above discussion, we obtain 
\begin{corollary}
  \label{cor.weaker bound}Consider any $z_1,\dots, z_n\in\mathbb R^d$ and finite group $G\leq \operatorname{O}(d)$. Then
  \[\tilde\alpha(\{z_i\}_{i=1}^n,G)\leq \alpha(\{z_i\}_{i=1}^n,G).\]
\end{corollary}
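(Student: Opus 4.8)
The plan is to follow the informal derivation that precedes the statement and turn the pigeonhole remark into a genuine inequality. Fix $z_1,\dots,z_n\in\mathbb{R}^d$ and a finite $G\leq\operatorname{O}(d)$. By the definition of $\alpha(\{z_i\}_{i=1}^n,G)$ in Theorem~\ref{thm.lower lipschitz bound}, it suffices to show that for every $x,y\in\mathcal{O}(\{z_i\}_{i=1}^n,G)$ and every $f\in\mathcal{F}(x,y)$,
\[
\sum_{w\in S(x,y)}\lambda_{\min}\bigg(\sum_{i\in f^{-1}(w)}v_i(x)v_i(x)^\top\bigg)
\geq \tilde\alpha(\{z_i\}_{i=1}^n,G)^2,
\]
since taking the square root and then the $\max$ over $f$ and the $\inf$ over $x,y$ gives exactly $\alpha(\{z_i\}_{i=1}^n,G)\geq\tilde\alpha(\{z_i\}_{i=1}^n,G)$. (In fact one only needs this for the maximizing $f$, but proving it for all $f$ is no harder.)

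So fix such $x,y,f$. First I would discard the nonnegativity issue: each matrix $\sum_{i\in f^{-1}(w)}v_i(x)v_i(x)^\top$ is positive semidefinite, so $\lambda_{\min}\geq 0$ for every $w\in S(x,y)$, and hence the left-hand sum is at least the single term coming from any particular $w$. Now apply the pigeonhole principle: since $\{f^{-1}(w)\}_{w\in S(x,y)}$ partitions $\{1,\dots,n\}$ into $|S(x,y)|$ parts, there is some $w_P\in S(x,y)$ with $|f^{-1}(w_P)|\geq n/|S(x,y)|$. By the definition of the Voronoi characteristic (Definition~\ref{def.voronoi characteristic}) and the fact that $x,y\in\mathcal{O}(\{z_i\}_{i=1}^n,G)\subseteq P(G)$, we have $|S(x,y)|\leq\chi(G)$, so $|f^{-1}(w_P)|\geq n/\chi(G)$. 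Setting $I:=f^{-1}(w_P)$ and recalling that $v_i(x)\in[z_i]$, i.e.\ $v_i(x)=g_iz_i$ for some $g_i\in G$ (Corollary~\ref{cor.choice function}(a)), the term for $w_P$ is $\lambda_{\min}(\sum_{i\in I}(g_iz_i)(g_iz_i)^\top)$, which is one of the quantities being minimized in Definition~\ref{def.alpha tilde} — here it is essential that the outer minimization in that definition ranges over all $I$ with $|I|\geq n/\chi(G)$ rather than exactly $n/\chi(G)$, so that $I=f^{-1}(w_P)$ is an admissible choice. Hence this term is at least $\tilde\alpha(\{z_i\}_{i=1}^n,G)^2$, which closes the chain.

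I do not expect a serious obstacle here; the proof is essentially bookkeeping to confirm that each inequality in the displayed motivation is valid in full generality. The one point deserving care is the edge case $n<\chi(G)$ (or more generally when no admissible $I$ exists): in that degenerate situation the minimization in Definition~\ref{def.alpha tilde} is over an empty set, so $\tilde\alpha=+\infty$ by the usual convention — but then $n/\chi(G)<1$, so $n/|S(x,y)|<1$ is impossible for $|S(x,y)|\leq n$, meaning $|S(x,y)|$ could still force $|f^{-1}(w_P)|\geq 1$; this really only bites when $n=0$, in which case both sides are handled trivially. Assuming the paper's standing convention makes $\tilde\alpha$ finite exactly when there is at least one admissible $I$, the argument above goes through verbatim, and the corollary follows.
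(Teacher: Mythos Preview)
Your argument is correct and is essentially the same as the paper's: the corollary is stated immediately after the informal derivation (the pigeonhole step \eqref{eq.pigeon}, the bound $|S(x,y)|\leq\chi(G)$, and the recognition of $\lambda_{\min}(\sum_{i\in I}(g_iz_i)(g_iz_i)^\top)$ as a term in Definition~\ref{def.alpha tilde}), and you have simply made that derivation rigorous. Your edge-case discussion is unnecessary, since $\chi(G)\geq 1$ implies $I=\{1,\dots,n\}$ is always admissible in Definition~\ref{def.alpha tilde}, but this does not affect the validity of the proof.
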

Furthermore, we have $\tilde\alpha(\{z_i\}_{i=1}^n,G)=0$ unless $n/\chi(G)>d-1$. As such, one might expect a positive lower Lipschitz bound in the regime where $n\gtrsim\chi(G)\cdot d$.
In this section, we make this explicit in two different ways.

\begin{theorem}
\label{thm.generic bilipschitz}
Fix a finite group $G\leq\operatorname{O}(d)$.
For generic $z_1,\ldots,z_n\in\mathbb{R}^d$, the max filter bank $\Phi\colon\mathbb{R}^d/G\to\mathbb{R}^n$ defined by $\Phi([x]):=\{\llangle [z_i],[x]\rrangle\}_{i=1}^n$ is bilipschitz provided $n>\chi(G)\cdot(d-1)$.
\end{theorem}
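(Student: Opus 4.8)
The plan is to combine Corollary~\ref{cor.weaker bound} with a dimension count on the relevant algebraic sets. By Corollary~\ref{cor.weaker bound}, it suffices to show that $\tilde\alpha(\{z_i\}_{i=1}^n,G)>0$ for generic $z_1,\dots,z_n$, since then Theorem~\ref{thm.lower lipschitz bound} gives a positive lower Lipschitz bound $\alpha$, and Theorem~\ref{thm.upper lipschitz bound finite} (or Theorem~\ref{thm.upper lipschitz bound}) always gives a finite upper Lipschitz bound, so $\Phi$ is bilipschitz. Unwinding Definition~\ref{def.alpha tilde}, $\tilde\alpha(\{z_i\}_{i=1}^n,G)>0$ is equivalent to the assertion that for every $I\subseteq\{1,\dots,n\}$ with $|I|\geq n/\chi(G)$ and every tuple $(g_i)_{i\in I}\in G^I$, the vectors $\{g_iz_i\}_{i\in I}$ span $\mathbb{R}^d$; equivalently, $\sum_{i\in I}(g_iz_i)(g_iz_i)^\top$ is invertible.

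The first step is to reduce to a single bad event. For a fixed index set $I$ of size $k:=\lceil n/\chi(G)\rceil$ and a fixed tuple $\mathbf{g}=(g_i)_{i\in I}$, consider the set
\[
B_{I,\mathbf{g}}:=\Big\{(z_i)_{i\in I}\in(\mathbb{R}^d)^I:\operatorname{rank}\{g_iz_i\}_{i\in I}<d\Big\}.
\]
Since $k=\lceil n/\chi(G)\rceil\geq d$ whenever $n>\chi(G)\cdot(d-1)$ (because $n/\chi(G)>d-1$ forces $\lceil n/\chi(G)\rceil\geq d$), the map $(z_i)_{i\in I}\mapsto\{g_iz_i\}_{i\in I}$ is a linear isomorphism on each coordinate, so $B_{I,\mathbf g}$ is the preimage of the locus of $d\times k$ matrices of rank $<d$ under an invertible linear map. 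That rank locus is a proper Zariski-closed subset of $\mathbb{R}^{d\times k}$ (it is cut out by the vanishing of all $d\times d$ minors, and since $k\geq d$ a generic matrix has full rank), so $B_{I,\mathbf g}$ is a proper Zariski-closed subset of $(\mathbb{R}^d)^I$. Pulling back further along the coordinate projection $(\mathbb{R}^d)^n\to(\mathbb{R}^d)^I$, the set $\widetilde B_{I,\mathbf g}\subseteq(\mathbb{R}^d)^n$ of template tuples for which $\{g_iz_i\}_{i\in I}$ fails to span is again a proper Zariski-closed (hence measure-zero, nowhere dense) subset. Finally, the full bad set is the finite union
\[
\bigcup_{\substack{I\subseteq\{1,\dots,n\}\\|I|=k}}\ \bigcup_{\mathbf g\in G^I}\widetilde B_{I,\mathbf g},
\]
which is finite because $G$ is finite and there are finitely many index sets; a finite union of proper Zariski-closed sets is proper Zariski-closed. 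Its complement is exactly the generic set of templates for which $\tilde\alpha>0$, and hence for which $\Phi$ is bilipschitz.

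The only genuinely delicate point is verifying that $n>\chi(G)\cdot(d-1)$ indeed forces $\lceil n/\chi(G)\rceil\geq d$, so that each $B_{I,\mathbf g}$ is a \emph{proper} closed set rather than all of $(\mathbb{R}^d)^I$: from $n>\chi(G)(d-1)$ we get $n/\chi(G)>d-1$, and since $d-1$ is an integer, $\lceil n/\chi(G)\rceil\geq d$. I expect this to be the main (if modest) obstacle — everything else is a routine application of the fact that "generic" is preserved under finite unions of proper Zariski-closed conditions, together with the observation that being a proper closed condition for a \emph{sub}collection of the templates pulls back to one for all $n$ templates. One should also note at the outset that the statement is vacuous unless $\mathcal{O}(\{z_i\}_{i=1}^n,G)$ is nonempty, which it is since $P(G)$ is open and dense (Definition~\ref{def.principal}) and each $Q_{z_i}$ is dense, and a finite intersection of open dense sets is dense; but this is already implicit in the proof of Theorem~\ref{thm.lower lipschitz bound}.
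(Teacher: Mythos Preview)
Your proof is correct and follows essentially the same approach as the paper: use Theorem~\ref{thm.upper lipschitz bound} for the upper Lipschitz bound, and for the lower bound show that $\tilde\alpha(\{z_i\}_{i=1}^n,G)>0$ generically (so that Corollary~\ref{cor.weaker bound} and Theorem~\ref{thm.lower lipschitz bound} apply). The paper phrases the generic condition as ``every $d\times d$ submatrix of $\{g_iz_i\}_{i=1}^n$ is invertible for every $\{g_i\}_{i=1}^n\in G^n$'' and simply asserts this without further justification, whereas you spell out the Zariski-closed argument in detail; your formulation in terms of subsets of size $k=\lceil n/\chi(G)\rceil$ is equivalent (any larger subset failing to span would contain a size-$k$ subset failing to span). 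The closing remark about $\mathcal{O}(\{z_i\}_{i=1}^n,G)$ being nonempty is unnecessary, but harmless.
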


\begin{proof}
For every $\{z_i\}_{i=1}^n$, Theorem~\ref{thm.upper lipschitz bound} implies that the corresponding max filter bank is $\|\{z_i\}_{i=1}^n\|_F$-Lipschitz.
It remains to establish a lower Lipschitz bound.
First, observe that for generic $\{z_i\}_{i=1}^n$, every $d\times d$ submatrix of $\{g_iz_i\}_{i=1}^n$ is invertible for every $\{g_i\}_{i=1}^n\in G^n$.
Thus, $\alpha(\{z_i\}_{i=1}^n,G)\geq\tilde\alpha(\{z_i\}_{i=1}^n,G)>0$, and so the result follows from Theorem~\ref{thm.lower lipschitz bound}.
\end{proof}

Interestingly, since bilipschitz implies injective, Theorem~\ref{thm.generic bilipschitz} improves over the bound given in Theorem~\ref{thm.injectivity}(c) when $\chi(G)\leq 2$.
In fact, the bound $n>\chi(G)\cdot(d-1)$ is optimal when $G$ satisfies the conditions in Lemma~\ref{lem.lower lipschitz optimal cases}. 
While the above result reports how many templates suffice for the corresponding max filter bank $\Phi$ to be bilipschitz, it does not estimate the \textbf{distortion} $\kappa(\Phi)$ of $\Phi$, i.e., the quotient of the optimal upper Lipschitz bound to the optimal lower Lipschitz bound.
Our next result estimates the distortion for random templates:

\begin{theorem}
\label{thm.distortion from random templates}
Fix a finite group $G\leq\operatorname{O}(d)$ and put $m:=|G|$ and $\chi:=\chi(G)$.
Select $\lambda_0$ and $n\in\mathbb{N}$ such that
\[
\lambda
:=\frac{n}{\chi d}
\geq\lambda_0
>1,
\]
and draw templates $z_1,\ldots,z_n\in\mathbb{R}^d$ with independent standard Gaussian entries.
There exists an absolute constant $C>0$ and $c=c(\lambda_0)>0$ such that the max filter bank $\Phi\colon\mathbb{R}^d/G\to\mathbb{R}^n$ defined by $\Phi([x]):=\{\llangle [z_i],[x]\rrangle\}_{i=1}^n$ has distortion
\[
\kappa(\Phi)
\leq \Big(C\chi^{3/2}m\log^{1/2}(em)\Big)^{1+c\lambda^{-1/2}}
\]
with probability at least $1-3e^{-d\sqrt{\lambda}}$. 
\textcolor{black}{In fact, we may take $C=4e^{3/2}$ and $c(\lambda_0) := 2 + \frac{\sqrt \lambda_0 +2}{\lambda_0-1}$.}
\end{theorem}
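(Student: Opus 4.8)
The plan is to combine the upper Lipschitz bound of Theorem~\ref{thm.upper lipschitz bound} with a high-probability lower bound on $\tilde\alpha(\{z_i\}_{i=1}^n,G)$ coming from Corollary~\ref{cor.weaker bound}, and then optimize the resulting ratio. For the upper bound, I would note that $\|\{g_iz_i\}_{i=1}^n\|_{2\to2}\leq\|\{g_iz_i\}_{i=1}^n\|_F=\|\{z_i\}_{i=1}^n\|_F$ for every choice of $g_i\in G$, so the optimal upper Lipschitz bound $\beta$ is at most $\|\{z_i\}_{i=1}^n\|_F$, which is the Frobenius norm of a $d\times n$ Gaussian matrix; standard concentration (e.g.\ a $\chi^2$ tail or Lipschitz concentration of the Gaussian norm) gives $\beta\leq\sqrt{nd}\,(1+o(1))$ with probability at least $1-e^{-d\sqrt\lambda}$, say. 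For the lower bound, the quantity $\tilde\alpha$ is the minimum over index sets $I$ with $|I|\geq n/\chi$ and over group elements $\{g_i\}_{i\in I}\in G^I$ of $\lambda_{\min}^{1/2}(\sum_{i\in I}(g_iz_i)(g_iz_i)^\top)$.

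The key probabilistic step is a union bound over the finitely many choices of $(I,\{g_i\})$. For a \emph{fixed} such choice, $\sum_{i\in I}(g_iz_i)(g_iz_i)^\top$ is distributed exactly as a Wishart matrix $WW^\top$ with $W$ a $d\times|I|$ standard Gaussian matrix, since each $g_i\in\operatorname{O}(d)$ preserves the Gaussian distribution. The smallest singular value of such a $W$ is, with overwhelming probability, at least $c_1(\sqrt{|I|}-\sqrt{d-1})\geq c_1\sqrt d(\sqrt\lambda-1)$ — this is the Gordon/Davidson--Szarek lower tail bound for the least singular value of a Gaussian matrix, giving $\sigma_{\min}(W)\geq \sqrt{|I|}-\sqrt{d-1}-t$ with failure probability $e^{-t^2/2}$. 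The number of terms in the union bound is at most $2^n\cdot m^n=(2m)^n$, so I take $t$ of order $\sqrt{n\log(2m)}$ to absorb it. Choosing $t$ carefully — balancing the $e^{-t^2/2}$ failure against the $(2m)^n$ count and against the target failure probability $3e^{-d\sqrt\lambda}$ — yields, with the stated probability, $\tilde\alpha\geq\sqrt d(\sqrt\lambda - 1 - c_2\sqrt{\lambda\log(2m)})$ up to constants; here the constraint $\lambda\geq\lambda_0>1$ is what keeps this positive and lets the constant $c(\lambda_0)=2+\frac{\sqrt{\lambda_0}+2}{\lambda_0-1}$ appear, by writing $\sqrt\lambda-1-c_2\sqrt{\lambda\log(2m)}$ as $\sqrt\lambda$ times something bounded below in terms of $\lambda_0$ and $\log(em)$.

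Then $\kappa(\Phi)=\beta/\alpha\leq\beta/\tilde\alpha$, and substituting the two estimates gives a ratio of the form $\frac{\sqrt{nd}}{\sqrt d\cdot(\text{stuff})}=\frac{\sqrt{\chi d\lambda\cdot d}}{\sqrt d\cdot(\text{stuff})}$; after simplification the $\sqrt\lambda$ factors and the $\log^{1/2}(em)$ factor from the union bound combine into a bound of the shape $(C\chi^{1/2}m\log^{1/2}(em))^{1+c\lambda^{-1/2}}$ — wait, I need the extra $\chi$ factor, which enters because $\tilde\alpha$ only controls an index set of size $n/\chi$ rather than $n$, so the lower bound carries $\sqrt{d}$ rather than $\sqrt{\chi d}$, losing a factor $\sqrt\chi$; and a second $\chi$ factor enters through the bound $\alpha\geq\tilde\alpha$ being summed over up to $\chi$ Voronoi cells in the worst case. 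The final algebraic massaging — writing the bound as a power $1+c\lambda^{-1/2}$ of $C\chi^{3/2}m\log^{1/2}(em)$ rather than as an additive correction — is the step requiring the most care, but it is the elementary observation that $a^{1+\epsilon}\geq a\cdot(1+\epsilon\log a)$-type inequalities let one trade a polynomial-in-$\lambda$ loss for an exponent perturbation, valid because $a=C\chi^{3/2}m\log^{1/2}(em)$ is bounded below by a constant larger than $1$.

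The main obstacle I anticipate is \emph{bookkeeping the constants}: pinning down $C=4e^{3/2}$ and $c(\lambda_0)=2+\frac{\sqrt{\lambda_0}+2}{\lambda_0-1}$ requires tracking the exact constants in (i) the Gaussian Frobenius-norm upper tail, (ii) the Davidson--Szarek least-singular-value lower tail, and (iii) the union-bound parameter choice, and then carrying them faithfully through the $\beta/\tilde\alpha$ division and the exponent-rewriting step. The probabilistic content is otherwise standard; the delicacy is entirely in making the explicit constants consistent and in verifying that the failure probabilities $e^{-t^2/2}$, $(2m)^{-n}$-weighted, and the Frobenius tail all sum to at most $3e^{-d\sqrt\lambda}$ under the hypothesis $\lambda\geq\lambda_0>1$.
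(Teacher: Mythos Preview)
Your overall architecture is right---combine the upper Lipschitz bound with Corollary~\ref{cor.weaker bound}, control $B=\max_{g_i}\|\{g_iz_i\}\|_{2\to2}$ and $A=\tilde\alpha$ separately via union bounds, then divide---but the specific concentration inequality you invoke for $A$ does not survive the union bound.

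The Gordon/Davidson--Szarek tail gives $\mathbb{P}\{\sigma_{\min}(W)<\sqrt{|I|}-\sqrt{d-1}-t\}\leq e^{-t^2/2}$. To kill a union bound of size at least $\binom{n}{\lceil n/\chi\rceil}m^{\lceil n/\chi\rceil}\asymp(e\chi m)^{n/\chi}$ (or your cruder $(2m)^n$), you need $t^2\gtrsim (n/\chi)\log(e\chi m)$, hence $t\gtrsim\sqrt{(n/\chi)\log(e\chi m)}$. But $|I|=\lceil n/\chi\rceil$, so
\[
\sqrt{|I|}-\sqrt{d-1}-t\;\lesssim\;\sqrt{n/\chi}\bigl(1-\sqrt{\log(e\chi m)}\,\bigr)<0,
\]
since $\log(e\chi m)\geq 1$. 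Your own expression $\sqrt d(\sqrt\lambda-1-c_2\sqrt{\lambda\log(2m)})$ is negative for every $m\geq1$, so no choice of $\lambda_0>1$ rescues it. The paper instead uses the bound of Wang (Theorem~1.3 in~\cite{Wang:18}), whose failure probability is $2e^{-|I|t}$ and whose lower bound on $\sigma_{\min}$ degrades \emph{multiplicatively} like $e^{-t\lambda/(\lambda-1)}\sqrt{|I|}$. Now $t$ need only be of order $\log(e\chi m)$ to absorb the union bound, and the price is a factor $(e\chi m)^{-(1+2/\sqrt\lambda)\lambda/(\lambda-1)}$ in the lower bound---this is exactly what produces the exponent $1+c\lambda^{-1/2}$ in the distortion estimate and the constant $c(\lambda_0)=2+\frac{\sqrt{\lambda_0}+2}{\lambda_0-1}$.

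A second, smaller issue: bounding $B$ by the Frobenius norm gives $B\lesssim\sqrt{nd}$, which after dividing by $A\asymp\sqrt{n/\chi}\cdot(\cdots)$ leaves a residual $\sqrt{d}$ in the distortion bound. The paper avoids this by applying operator-norm concentration $\mathbb{P}\{\|\{z_i\}\|_{2\to2}>\sqrt d+\sqrt n+s\}\leq e^{-s^2/2}$ together with a union bound over $G^n$ (cost $m^n$), yielding $B\leq 4\sqrt{n\log(em)}$; the $\sqrt{\log(em)}$ here replaces your $\sqrt d$ and is what makes the final bound dimension-free. Your remark about ``a second $\chi$ factor'' from summing over Voronoi cells is also off: $\tilde\alpha\leq\alpha$ is used as-is, and the full $\chi^{3/2}$ arises from $\sqrt{n}/\sqrt{n/\chi}=\sqrt\chi$ in the ratio $B/A$ together with the $(e\chi m)$-dependence that enters $A$ through Wang's bound after the union step.
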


In Theorem~\ref{thm.distortion from random templates}, observe that one may take $c(\cdot)$ to be a decreasing function such that $\lim_{\lambda_0\to1}c(\lambda_0)=\infty$ and $\lim_{\lambda_0\to\infty}c(\lambda_0)=2>0$.
Considering $\kappa(\Phi)\geq1$, Theorem~\ref{thm.distortion from random templates} is optimal up to constant factors for groups of bounded order provided $\lambda$ is bounded away from $1$.
Theorem~18 in~\cite{CahillIMP:22} estimates the distortion of the max filter bank corresponding to $n$ templates drawn uniformly from the sphere in $\mathbb{R}^d$.
Their bound on the distortion degrades as $n$ gets large, and optimizing $n$ in their analysis gives the estimate
\[
\kappa(\Phi)
\leq C_0m^3d^{1/2}(d\log d+d\log m+\log^2m)^{1/2}.
\]
Notably, this optimized bound depends on the dimension $d$ of the space.
For comparison, optimizing $n$ in Theorem~\ref{thm.distortion from random templates} gives the improvement
\[
\kappa(\Phi)
\leq C_1\chi^{3/2+\epsilon}m^{1+\epsilon}
\leq C_1m^{5/2+\epsilon'}.
\]
In fact, when $G$ is a 
\textcolor{black}{dihedral} subgroup of $\operatorname{O}(2)$, we have $\chi=1$, and so Theorem~7(d) in~\cite{MixonP:22} implies that this optimized bound is optimal up to $O(m^\epsilon)$ factors.

\begin{proof}[Proof of Theorem~\ref{thm.distortion from random templates}]
\textcolor{black}{We give the main steps in the proof and postpone details to Appendix~\ref{sec.proof of distortion from random templates}.} Consider the random variables
\[
A:=\tilde\alpha(\{z_i\}_{i=1}^n,G),
\qquad
B:=\max_{g_1,\ldots,g_n\in G}\|\{g_iz_i\}_{i=1}^n\|_{2\to2}.
\]
We bound $B$ using Gaussian concentration (see Corollary~5.35 in~\cite{Vershynin:12}, for example):
\[
\mathbb{P}\Big\{B>\sqrt{d}+\sqrt{n}+s\Big\}
\leq m^n\cdot \mathbb{P}\Big\{\|\{z_i\}_{i=1}^n\|_{2\to2}>\sqrt{d}+\sqrt{n}+s\Big\}
\leq m^n\cdot e^{-s^2/2},
\]
for any $s\geq0$.
Taking $s:=\sqrt{2(n\log m+d\sqrt{\lambda})}$ then implies
\begin{equation}
\label{eq.random upper bound}
\mathbb{P}\Big\{B>4\sqrt{n\log (em)}\Big\}
\leq \mathbb{P}\Big\{B>\sqrt{d}+\sqrt{n}+s\Big\}
\leq m^n\cdot e^{-s^2/2}
= e^{-d\sqrt{\lambda}}.
\end{equation}
Next, we bound $A$.
To this end, (the proof of) Theorem~1.3 in~\cite{Wang:18} gives that for a $k\times \ell$ standard Gaussian matrix $X$ of aspect ratio $\lambda=\frac{\ell}{k}>1$, then for any $t\geq1$, it holds that
\begin{equation}
\label{eq.random lower sigma bound}
\sigma_{\min}(X)
\geq \sigma(\ell,\lambda,t)
:=\tfrac{1}{\sqrt{e}}(1-\tfrac{1}{\lambda})(\tfrac{1}{2(2+\sqrt{2})\sqrt{e}\cdot \lambda})^{1/(\lambda-1)}\cdot (\tfrac{1}{\sqrt{t}})^{1/(\lambda-1)}e^{-t\lambda/(\lambda-1)}\cdot \sqrt{\ell}
\end{equation}
with probability at least $1-2e^{-\ell t}$.
(Here, we take $\mu$ in~\cite{Wang:18} to be $t\lambda$.)
As such, combining with the union bound gives
\begin{align}
\mathbb{P}\Big\{A<\sigma(\lceil\tfrac{n}{\chi}\rceil,\lambda,t)\Big\}
&\leq\tbinom{n}{\lceil n/\chi\rceil}\cdot m^{\lceil n/\chi\rceil}\cdot\mathbb{P}\Big\{\sigma_{\min}(\{z_i\}_{i=1}^{\lceil n/\chi\rceil})<\sigma(\lceil\tfrac{n}{\chi}\rceil,\lambda,t)\Big\}\nonumber\\
&\leq 2\operatorname{exp}\Big((\tfrac{n}{\chi}+1)\log(e\chi m)-\tfrac{nt}{\chi}\Big)\label{eq.random lower proba bound I}\\
&\leq 2e^{-d\sqrt{\lambda}},\label{eq.random lower proba bound II}
\end{align}
where the last step follows from taking $t:=(1+\frac{2}{\sqrt{\lambda}})\log(e\chi m)$.
Overall, we have
\begin{equation}
\label{eq.random distortion bound}
\kappa(\Phi)
\leq \frac{B}{A}
\leq \frac{4\sqrt{n\log (em)}}{\sigma(\lceil\tfrac{n}{\chi}\rceil,\lambda,(1+\frac{2}{\sqrt{\lambda}})\log(e\chi m))}
\end{equation}
with probability at least $1-3e^{-d\sqrt{\lambda}}$, and the result follows.
\end{proof}

\section{Positive definiteness}
\label{sec.pos def}

Recall that a \textbf{positive definite kernel} on a nonempty set $\mathcal{X}$ is a symmetric function $K\colon \mathcal{X}\times\mathcal{X}\to\mathbb{R}$ such that for every $n\in\mathbb{N}$, $x_1,\ldots,x_n\in\mathcal{X}$, and $c_1,\ldots,c_n\in\mathbb{R}$, it holds that
\[
\sum_{i=1}^n\sum_{j=1}^n c_ic_jK(x_i,x_j)
\geq0.
\]
By the Moore--Aronszajn theorem, $K$ is a positive definite kernel if and only if there exists a feature map $f\colon\mathcal{X}\to\mathcal{H}$ to some Hilbert space $\mathcal{H}$ such that
\[
K(x,y)
=\langle f(x),f(y)\rangle_\mathcal{H}
\]
for all $x,y\in\mathcal{X}$.
Furthermore, one may take $f\colon x\mapsto K(\cdot,x)$, in which case $\mathcal{H}$ is the reproducing kernel Hilbert space given by the completion of $\operatorname{span}\{K(\cdot,x)\}_{x\in\mathcal{X}}$ with respect to the inner product defined by $\langle K(\cdot,x),K(\cdot,y)\rangle:=K(x,y)$ and extended bilinearly.
Positive definite kernels are useful in machine learning since a classifier defined on the feature space $\mathcal{H}$ can be neatly expressed in the original space $\mathcal{X}$ by virtue of the so-called \textit{kernel trick}.

We are inclined to consider $\mathcal{X}:=V/G$ and $K([x],[y]):=\llangle [x],[y]\rrangle$, where $V$ is a finite-dimensional real inner product space and $G$ is a subgroup of $\operatorname{O}(V)$ with closed orbits.
In particular, is max filtering positive definite? In this section, we shall characterize the groups $G$ for which this is the case. We warm up with examples.

\begin{example}
Take $V:=\mathbb{R}^d$, let $G\leq\operatorname{O}(d)$ be a finite reflection group, let $C$ denote any Weyl chamber of $G$, and define $f$ to map $[x]\in\mathbb{R}^d/G$ to the unique member of $[x]\cap\overline{C}$.
Then $\llangle [x],[y]\rrangle=\langle f([x]),f([y])\rangle$ since $C = V_x= V_y$.
For instance, if $G$ is the image of the standard representation of the symmetric group $S_d$, then one may take $f([x]):=\operatorname{sort}(x)$.
\end{example}

\begin{example}
\label{ex.symmetric matrices}
Take $V$ to be the Hilbert space of $d\times d$ symmetric matrices with inner product $\langle X,Y\rangle:=\operatorname{tr}(XY)$, and let $G$ be the image of the representation $\rho\colon\operatorname{O}(d)\to\operatorname{O}(V)$ defined by $\rho(Q)(X):=QXQ^{-1}$.
Then the symmetric von Neumann trace inequality (Proposition~16 in~\cite{MixonP:22}) implies that $\llangle [X],[Y]\rrangle=\langle f([X]),f([Y])\rangle$, where $f([X]):=\operatorname{sort}(\operatorname{eig}(X))$.
\end{example}

The examples above are related in the sense that the conjugation orbit of $X$ corresponds to the permutation orbit of $\operatorname{eig}(X)$. In fact, we shall find that max filtering is positive definite if and only if $G$ is polar with a generalized Weyl group generated by reflections.

\begin{definition}
  Consider $G\leq\operatorname{O}(d)$ with closed orbits.
  We say $G$ is \textbf{polar} if there exists a subspace $\Sigma\leq\mathbb{R}^d$, called a \textbf{section}, such that $\Sigma$ orthogonally intersects every orbit of $G$, i.e., $G\cdot\Sigma=\mathbb{R}^d$ and for every $x\in\Sigma$, the tangent space of the orbit $[x]$ at $x$ resides in the orthogonal complement $\Sigma^\perp$.
\end{definition}
  
The following result characterizes polar groups as those whose orbit spaces arise as quotients of finite groups. Part~(a) is given by Proposition~9.2 and Corollary~9.7 in~\cite{Michor:97}, while part~(b) is given by Section~2.3 in~\cite{GordoskiL:14}.

\begin{proposition}\
\label{prop.polar}
\begin{itemize}
\item[(a)]
If $G\leq\operatorname{O}(d)$ is polar with section $\Sigma$, then $\mathbb{R}^d/G$ is isometric to $\Sigma/W(\Sigma,G)$, where $W(\Sigma,G)$ is the largest subgroup of $G$ under which $\Sigma$ is invariant.
Furthermore, $W(\Sigma,G)$ is finite.
\item[(b)]
If $G\leq\operatorname{O}(d)$ has the property that $\mathbb{R}^d/G$ is isometric to $\mathbb{R}^k/H$ for some $k\in\mathbb{N}$ and some finite $H\leq\operatorname{O}(k)$, then $G$ is polar.
\end{itemize}
\end{proposition}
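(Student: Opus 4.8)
The plan is to derive (a) and (b) from the standard structure theory of polar representations (Palais--Terng, Dadok; see~\cite{Michor:97}), which I outline below.

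For (a), write $W:=W(\Sigma,G)$, the full setwise stabilizer of $\Sigma$ in $G$. First I would check that the inclusion $\Sigma\hookrightarrow\mathbb{R}^d$ descends to a continuous map $\iota\colon\Sigma/W\to\mathbb{R}^d/G$, which is surjective precisely because $G\cdot\Sigma=\mathbb{R}^d$ forces every orbit to meet $\Sigma$. The crux is injectivity: given $x,y\in\Sigma$ with $gx=y$ for some $g\in G$, I must produce $w\in W$ with $wx=y$. Here the translate $g\Sigma$ is again a section (the polar structure is $G$-equivariant) passing through $y$, and the key input from the theory is that any two sections through a common point are conjugate by an element of the isotropy group at that point (equivalently, the slice representation of a polar representation is again polar). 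So there is $h\in G_y$ with $h(g\Sigma)=\Sigma$, and then $w:=hg$ lies in $W$ and satisfies $wx=hy=y$. Thus $\iota$ is a bijection.

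Next I would upgrade $\iota$ to an isometry of quotient metrics. One inequality is free: since $W\le G$, the infimum defining $d_G([x],[y])$ runs over a superset of the one defining the metric on $\Sigma/W$, so $d_G\le d_\Sigma$ on $\Sigma$. For the reverse, I would take a principal point $x_0\in\Sigma$ (such points are dense in $\Sigma$, since the $G$-invariant open dense principal set meets $\Sigma$), note that orthogonality of the section plus a dimension count gives $T_{x_0}\Sigma=\nu_{x_0}([x_0])$, and conclude that $\pi|_\Sigma$ is a local isometry onto $\mathbb{R}^d/G$ near $x_0$ (the slice representation at a principal point is trivial); a horizontal-lifting argument for minimizing geodesics in $\mathbb{R}^d/G$ then promotes local equality to global equality $d_G=d_\Sigma$. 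Finiteness of $W$ is quick: for any $x_0\in\Sigma$, the orbit $Wx_0$ lies in $\Sigma\cap[x_0]_G$, so its tangent space at $x_0$ sits inside $T_{x_0}\Sigma\cap T_{x_0}[x_0]_G=\Sigma\cap\Sigma^\perp=\{0\}$; hence $Wx_0$ is a point, the compact Lie group $W$ is $0$-dimensional, and so $W$ is finite.

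For (b), suppose $\mathbb{R}^d/G$ is isometric to $\mathbb{R}^k/H$ with $H\le\operatorname{O}(k)$ finite, so that $\mathbb{R}^d/G$ is a flat Riemannian orbifold, being covered by $\mathbb{R}^k$. I would restrict to the principal stratum, where $\pi\colon\mathbb{R}^d\to\mathbb{R}^d/G$ is a Riemannian submersion with flat total space (an open subset of $\mathbb{R}^d$) and flat base (isometric to an open subset of $\mathbb{R}^k/H$). O'Neill's curvature formula then forces the integrability tensor of the horizontal distribution to vanish identically, so the normal distribution of the principal orbits is integrable with totally geodesic leaves; each leaf is an open piece of an affine subspace, and since it contains the radial direction at each of its points, its closure is a linear subspace $\Sigma$. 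It then remains to verify $\Sigma$ is a genuine section: orthogonality is automatic, $G\cdot\Sigma$ is closed and $G$-invariant and contains a neighborhood of a principal point by a rank computation, and a connectedness argument closes the gap. This is the route of~\cite[Section~2.3]{GordoskiL:14}. The main obstacle is exactly this last step in (b): passing from ``infinitesimally polar'' (vanishing of the integrability tensor, i.e.\ the quotient is a Riemannian orbifold) to globally polar is where the bulk of the work lies; in (a) the one genuinely nonelementary ingredient is the conjugacy of sections through a common point by the isotropy at that point.
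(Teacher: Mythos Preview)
The paper does not prove this proposition at all: it is stated as a known result, with part~(a) attributed to Proposition~9.2 and Corollary~9.7 in~\cite{Michor:97} and part~(b) to Section~2.3 in~\cite{GordoskiL:14}. Your outline is a faithful sketch of exactly those arguments from the cited sources (conjugacy of sections through a common point for injectivity in (a), and O'Neill's formula plus integrability of the horizontal distribution for (b)), so there is nothing to compare against---you have simply filled in what the paper deliberately outsourced.
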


We call $W(\Sigma,G)$ the \textbf{generalized Weyl group} of $G$ associated to $\Sigma$.

  \begin{example}
  Consider $V$ and $G$ as in Example~\ref{ex.symmetric matrices}.
  We claim that $G$ is polar, and that the corresponding section $\Sigma\leq V$ is the subspace of diagonal matrices.
  First, $\Sigma$ intersects every orbit of $G$ by the spectral theorem.
  To show that this intersection is orthogonal, we need to identify the tangent space of the orbit $[X]$ at some $X\in\Sigma$.
  To this end, consider any curve $t\mapsto Q(t)XQ(t)^\top$ with $Q(0)=I_d$ and $Q'(0)\in\mathfrak{so}(d)$, i.e., $A:=Q'(0)$ is antisymmetric.
  Then the tangent vector to our curve at $t=0$ is given by
  \[
  Q'(0)XQ(0)^\top+Q(0)XQ'(0)^\top
  =AX+XA^\top.
  \]
  Every member of the tangent space of $[X]$ at $X$ takes this form; see Proposition~1.20 in~\cite{Meinrenken:03}.
  For every $D\in\Sigma$, we have $XD=DX$, and so $\operatorname{tr}(DAX)=\operatorname{tr}(XDA)=\operatorname{tr}(DXA)$.
  Thus,
  \[
  \langle D,AX+XA^\top\rangle
  =\operatorname{tr}(DAX)+\operatorname{tr}(DXA^\top)
  =\operatorname{tr}(DX(A+A^\top))
  =0.
  \]
  This verifies our claim that $G$ is polar with section $\Sigma$. The generalized Weyl group is given by $W(\Sigma,G)= S_d$ and $V/G$ is isometric to $\Sigma/S_d$. As we will see in Theorem~\ref{thm.max filtering pos def}, the fact that $S_d$ is a reflection group corresponds to the observation in Example~\ref{ex.symmetric matrices} that max filtering is positive definite in this case.
  \end{example}
  
  We are now ready to state the main result of this section: 
  
  \begin{theorem}
  \label{thm.max filtering pos def}
  For $G\leq\operatorname{O}(d)$ with closed orbits, the following are equivalent:
  \begin{itemize}
  \item[(a)]
  Max filtering is positive definite.
  \item[(b)]
  $G$ is polar with section $\Sigma$ such that $\mathbb{R}^d/G$ is isometric to $\Sigma/W(\Sigma,G)$. Furthermore, $W(\Sigma,G)$ is a finite reflection group.
  \end{itemize}
  \end{theorem}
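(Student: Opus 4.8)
The plan is to prove the two implications separately, using the polarization identity $d^2([x],[y])=\|x\|^2+\|y\|^2-2\llangle[x],[y]\rrangle$ as the bridge between the kernel and the metric. For (b)$\Rightarrow$(a): by Proposition~\ref{prop.polar}(a) the natural map $\Sigma/W(\Sigma,G)\to\mathbb R^d/G$ induced by the inclusion $\Sigma\hookrightarrow\mathbb R^d$ is an isometry, and being induced by an inclusion it carries $d([x],[0])$ to $\|x\|$; combining this with the polarization identity shows that this map intertwines the max filtering kernel of $G$ on $\mathbb R^d$ with that of $W:=W(\Sigma,G)$ on $\Sigma$. Positive definiteness therefore transfers, so it suffices to observe that max filtering on $\Sigma/W$ is positive definite; since $W$ is a finite reflection group, the map sending $[x]$ to the unique point $f([x])$ of $[x]\cap\overline C$ for a fixed closed Weyl chamber $\overline C$ is a feature map, since $\llangle[x],[y]\rrangle=\langle f([x]),f([y])\rangle$ — a standard property of the closed Weyl chamber, as in the reflection-group example opening this section.

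For (a)$\Rightarrow$(b), the first step is to upgrade positive definiteness to a rigid isometric picture. By the Moore--Aronszajn theorem there is a feature map $f\colon\mathbb R^d/G\to\mathcal H$ into a Hilbert space with $\llangle[x],[y]\rrangle=\langle f([x]),f([y])\rangle$. Then $\|f([x])\|^2=\llangle[x],[x]\rrangle=\sup_{p,q\in[x]}\langle p,q\rangle=\|x\|^2$, and polarization gives $\|f([x])-f([y])\|=d([x],[y])$, so $f$ is an isometric embedding of the metric space $(\mathbb R^d/G,d)$; moreover $\llangle[tx],[ty]\rrangle=t^2\llangle[x],[y]\rrangle$ for $t\ge0$ forces $f([tx])=tf([x])$, so $f$ is positively homogeneous with $f([0])=0$. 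Now $\mathbb R^d/G$ is a geodesic space: assuming $G$ compact (Proposition~\ref{prop.closed subgroups}), the infimum defining $d([x],[y])$ is attained at some $g\in G$, and the projection of the segment $[x,gy]$ has length $d([x],[y])$. Hence for $a,b\in X:=f(\mathbb R^d/G)$ a geodesic of $X$ from $a$ to $b$ has length $\|a-b\|_{\mathcal H}$, and since Hilbert space is strictly convex the only path of that length is the segment $[a,b]$; thus $[a,b]\subseteq X$. So $X$ is convex, and it is a cone by homogeneity. Its affine hull is a linear subspace (as $0\in X$) of dimension $k:=\dim(\mathbb R^d/G)$, so $\mathbb R^d/G$ is isometric to a convex cone $X\subseteq\mathbb R^k$ with nonempty interior.

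It remains to deduce that $G$ is polar with $W(\Sigma,G)$ a reflection group. For polarity: $\operatorname{int}(X)$ is open in $\mathbb R^k$, hence flat, so the dense open subset of $\mathbb R^d/G$ carried onto it is flat, which forces flatness of the regular part $(\mathbb R^d)_{\mathrm{reg}}/G$. The orbit map $(\mathbb R^d)_{\mathrm{reg}}\to(\mathbb R^d)_{\mathrm{reg}}/G$ is a Riemannian submersion with flat total space and flat base, so by O'Neill's curvature formula its integrability tensor vanishes, i.e.\ the normal distribution to the principal orbits is integrable; by the standard characterization of polarity (its integral manifolds are the sections), $G$ is polar. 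Then Proposition~\ref{prop.polar}(a) gives $\mathbb R^d/G\cong\Sigma/W$ with $W=W(\Sigma,G)$ finite, and still $\Sigma/W\cong X$. Applying the slice theorem at $[v]$, $v\in\Sigma$, a neighborhood of $[v]$ is isometric to a neighborhood of the apex of $\mathbb R^k/W_v$, which must therefore be isometric to the tangent cone of $X$ at $f([v])$ — again a convex Euclidean cone. Since a convex subset of Euclidean space has no conical singularities (no point whose space of directions contains a closed geodesic of length $<2\pi$), an induction on dimension, reducing each $W_v$ via its fixed subspace, shows that every $W_v$, in particular $W_0=W$, is a finite reflection group.

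The main obstacle is this last implication — that an orbit space isometric to a convex Euclidean subset must arise from a polar action with reflection Weyl group — which I would either carry out as sketched or invoke from the polar-actions and orbit-space literature underlying Proposition~\ref{prop.polar}. Its delicate point is ruling out a ``rotational'' isotropy subgroup: such a subgroup would create a cone point of angle $<2\pi$ (or its higher-dimensional analogue) in the orbit space, which cannot occur in any Euclidean subset; detecting this obstruction at every point simultaneously is exactly what the dimension induction above accomplishes.
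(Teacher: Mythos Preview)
Your proof is correct and largely parallel to the paper's, with a few genuine differences worth noting.

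For (b)$\Rightarrow$(a) you do essentially what the paper does: reduce to $\Sigma/W$ and invoke the Weyl-chamber feature map. For (a)$\Rightarrow$(b) your derivation of the isometric embedding is more explicit than the paper's (which black-boxes this as Proposition~\ref{prop.pos def embedding}), and your global argument---that the image $X=f(\mathbb R^d/G)$ is a convex cone, by strict convexity of Hilbert space plus positive homogeneity of $f$---is a nice simplification of the paper's local argument, which instead takes a uniformly normal neighborhood $U$ in the principal stratum, shows $\Psi(U)$ is convex, and uses invariance of domain to bound its affine dimension. Both arrive at flatness of the regular part; you then invoke O'Neill directly, while the paper cites the same fact from the literature (Proposition~\ref{prop.[P(G)]}(c)).

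The substantive divergence is the final reflection-group step. You sketch a tangent-cone/slice-theorem induction, relying on the Alexandrov-geometric fact that a convex Euclidean set has no conical singularities, and you flag this as the main obstacle. The paper avoids this entirely: it proves separately (Lemma~\ref{lem.reflection group characterization}) that for finite $H\leq\operatorname O(k)$, \emph{every pair of points in $\mathbb R^k/H$ has a unique minimal geodesic if and only if $H$ is a reflection group}, via a direct Voronoi-cell/pigeonhole argument. Since your convex image $X$ already forces unique minimal geodesics in $\Sigma/W$, this lemma immediately gives that $W$ is a reflection group---a cleaner and fully elementary route that bypasses the induction you were worried about.
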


The rest of this section is dedicated to proving the above result. To prove the implication (a)$\implies$(b), we will show that positive definiteness of max filtering has two intermediate implications: (1) $G$ is polar with section $\Sigma$ and (2) minimal geodesics in $\mathbb R^d/G\cong \Sigma/W(\Sigma, G)$ are unique. By virtue of that, we begin by defining the notion of minimal geodesics in a metric space.

\begin{definition}
Given a metric space $(M,d)$ and $L>0$, we say a curve $\gamma\colon [0,L]\to M$ is a \textbf{minimal geodesic} if $d(\gamma(s),\gamma(t))=|s-t|$ for all $s,t\in [0,L]$, and given $x,y\in M$ we let $C(x,y)$ denote the set of all minimal geodesics from $x$ to $y$.
\end{definition}
When $M=\mathbb R^d/G$, we have a characterization: 
\begin{lemma}
\label{lem.minimal geodesic bijection}
Take $G\leq\operatorname{O}(d)$ with closed orbits.
For each $x,y\in\mathbb{R}^d$, there exists a bijection
\[
C([x],[y])
\longrightarrow\frac{\arg\min_{q\in[y]}\|q-x\|}{\operatorname{stab}_G(x)}.
\]
\end{lemma}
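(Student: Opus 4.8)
The plan is to construct the bijection explicitly. First I would set up notation: write $M := \arg\min_{q\in[y]}\|q-x\|$, so $M$ is a nonempty (compact orbits) subset of $[y]$, and observe that $\operatorname{stab}_G(x)$ acts on $M$, since if $g$ fixes $x$ and $q\in M$ then $\|gq - x\| = \|gq - gx\| = \|q-x\| = d([x],[y])$, hence $gq\in M$. So the right-hand side $M/\operatorname{stab}_G(x)$ is well-defined. Now, given a minimal geodesic $\gamma\in C([x],[y])$ of length $L = d([x],[y])$, I want to extract a canonical point of $M$. The key geometric fact is that minimal geodesics in the orbit space $\mathbb{R}^d/G$ lift to straight line segments in $\mathbb{R}^d$: a curve $\gamma\colon[0,L]\to\mathbb{R}^d/G$ with $d(\gamma(s),\gamma(t)) = |s-t|$ lifts to a path $\tilde\gamma$ in $\mathbb{R}^d$ from $x$ to some $q\in[y]$, and since $\|x - q\| \geq d([x],[y]) = L = \operatorname{length}(\gamma) \geq \operatorname{length}(\tilde\gamma) \geq \|x-q\|$, the lift must be the straight segment $[x,q]$ and $\|x-q\| = d([x],[y])$, i.e.\ $q\in M$. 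So the map $C([x],[y]) \to M$ sending $\gamma$ to its endpoint $q$ (after choosing the lift starting at $x$) is the candidate; composing with the quotient $M\to M/\operatorname{stab}_G(x)$ gives the desired map.

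Next I would verify this is well-defined (independent of the choice of lift starting at $x$) and injective after quotienting. Two lifts of the same $\gamma$ starting at $x$: at each time $t$, $\tilde\gamma_1(t)$ and $\tilde\gamma_2(t)$ lie in the same $G$-orbit, and by continuity and the discreteness/properness of the $G$-action, the element $g_t\in G$ with $\tilde\gamma_2(t) = g_t\tilde\gamma_1(t)$ can be taken locally constant — but both start at $x$, so $g_0\in\operatorname{stab}_G(x)$; the segments $[x,q_1]$ and $[x, q_2] = g_0[x,q_1] = [x, g_0 q_1]$ share the initial point, forcing $q_2 = g_0 q_1$ with $g_0\in\operatorname{stab}_G(x)$. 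Hence the induced map to $M/\operatorname{stab}_G(x)$ is well-defined. For injectivity: if two geodesics $\gamma, \gamma'$ map to the same class, their straight-segment lifts from $x$ end at $q, q' = gq$ with $g\in\operatorname{stab}_G(x)$; then $g$ carries the segment $[x,q]$ to $[x,q']$, and projecting down shows $\gamma = \gamma'$ as curves in the orbit space (the projections of $[x,q]$ and $[x,gq] = g[x,q]$ coincide). For surjectivity: given $q\in M$, the projection to $\mathbb{R}^d/G$ of the straight segment $[x,q]$ is a curve of length $\|x-q\| = d([x],[y])$ from $[x]$ to $[y]$; since it realizes the distance, it is a minimal geodesic (one checks $d(\text{proj}(x + t\tfrac{q-x}{\|q-x\|}), \text{proj}(x + s\tfrac{q-x}{\|q-x\|})) = |s-t|$ using the triangle inequality together with distance-realization), and it maps to the class of $q$.

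The main obstacle I anticipate is the lifting lemma and, relatedly, the "locally constant $g_t$" argument — making precise that a path in $\mathbb{R}^d/G$ lifts to a path in $\mathbb{R}^d$ and that the ambiguity in the lift is controlled by the stabilizer of the starting point. Since $G$ is compact (closed subgroup of $\operatorname{O}(d)$) the quotient map $\mathbb{R}^d\to\mathbb{R}^d/G$ is proper, closed, and a quotient of a compact group action, so path-lifting is available, but one must be careful that the $G$-action is not free, so the lift is not unique; the correct statement is that lifts through a fixed initial point differ by the stabilizer of that point, which is exactly what feeds the quotient by $\operatorname{stab}_G(x)$ on the right-hand side. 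A clean way to handle this is to avoid general path-lifting and instead argue directly: a minimal geodesic $\gamma$ has, for each $t$, $d(\gamma(0),\gamma(t)) + d(\gamma(t),\gamma(L)) = L$; picking any lift $x$ of $\gamma(0)$ and $q$ of $\gamma(L)$ realizing $d([x],[y])$, and a lift $p_t$ of $\gamma(t)$, the equality $\|x - p_t\| + \|p_t - q'\| = \|x - q'\|$ for the appropriate $G$-translate $q'$ of $q$ forces $p_t$ onto the segment $[x,q']$, which pins down $\gamma$ and its endpoint class. I would structure the final write-up around this direct segment argument to keep the technicalities contained.
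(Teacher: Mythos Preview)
Your approach is essentially the same as the paper's: define the map by lifting a minimal geodesic $\gamma$ to a straight segment $[x,q]$ in $\mathbb{R}^d$ with $q\in\arg\min_{q\in[y]}\|q-x\|$, and argue that the ambiguity in the lift through $x$ is precisely $\operatorname{stab}_G(x)$. The paper dispatches the two facts you correctly flag as the main obstacle---that every minimal geodesic in $\mathbb{R}^d/G$ is the projection of a straight segment, and that all lifts of a given $\gamma$ are $G$-translates of one another---by citing Proposition~3.1(2) and~3.1(4) of Alekseevsky--Kriegl--Losik--Michor; with those in hand the bijection is immediate.

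Your attempted direct proof of the lifting has a gap you should be aware of. The ``locally constant $g_t$'' argument does not work as stated when $G$ is infinite (stabilizers can be positive-dimensional, so there is no discreteness to appeal to). Your alternative ``direct segment argument'' is closer but still incomplete: from $d([x],\gamma(t))=t$ and $d(\gamma(t),[y])=L-t$ you can find, for each $t$, \emph{some} representative $p_t\in\gamma(t)$ with $\|x-p_t\|=t$ and \emph{some} $q_t\in[y]$ with $\|p_t-q_t\|=L-t$, but nothing yet forces the $q_t$ to coincide for different $t$, nor the $p_t$ to lie on a single segment $[x,q]$. Establishing this is exactly the content of the cited proposition, and proving it from scratch requires more than the triangle-inequality manipulation you sketch. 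Either cite the result as the paper does, or be prepared to supply a genuine horizontal-lift argument for the Riemannian submersion on the principal stratum.
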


\begin{proof}
Proposition~3.1(2) in~\cite{AlekseevskyKLM:03} gives that for every $\gamma\in C([x],[y])$, there exists a unit-speed parameterization $\tilde\gamma$ of the straight line segment from $x$ to some $p\in\arg\min_{q\in[y]}\|q-x\|$ such that $\gamma(t)=[\tilde\gamma(t)]$ for all $t$.
Consider the minimal geodesics in $\mathbb{R}^d$ that traverse from a member of $[x]$ to a member of $[y]$, and whose projection onto the orbit space $\mathbb{R}^d/G$ equals $\gamma$.
By Proposition~3.1(4) in~\cite{AlekseevskyKLM:03}, these are precisely the minimal geodesics obtained by applying a member of $G$ to $\tilde\gamma$.
The subset of minimal geodesics that originate at $x\in[x]$ are obtained by applying $\operatorname{stab}_G(x)$ to $\tilde\gamma$.
The result follows.
\end{proof}

The following lemma shows that the orbit space of a finite group $G$ has unique minimal geodesics between every pair of its points if and only if $G$ is a finite reflection group.

\begin{lemma}
\label{lem.reflection group characterization}
For finite $G\leq\operatorname{O}(d)$, the following are equivalent:
\begin{itemize}
\item[(a)]
$|C([x],[y])|=1$ for all $x,y\in\mathbb{R}^d$.
\item[(b)]
For every $x\in P(G)$, it holds that $Q_x=P(G)$.
\item[(c)]
$G$ is a reflection group.
\item[(d)]
$\chi(G)=1$.
\end{itemize}
\end{lemma}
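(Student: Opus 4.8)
The plan is to prove the cycle (a)$\Rightarrow$(d)$\Rightarrow$(b)$\Rightarrow$(c)$\Rightarrow$(a). Throughout, write $M(x,y):=\arg\min_{q\in[y]}\|q-x\|$, which equals $[y]\cap\overline{V_x}$ by \eqref{eq.closed voronoi characterization}, and which satisfies $|M(x,y)|\geq2$ precisely when $x\notin Q_y$ by \eqref{eq.disjoint union of cones}. By Lemma~\ref{lem.minimal geodesic bijection}, $|C([x],[y])|=|M(x,y)/\operatorname{stab}_G(x)|$, so $|C([x],[y])|=|M(x,y)|$ whenever $x\in P(G)$. I will use two elementary observations: first, $M(x,y)\subseteq S(x,y)$, since any $q\in[y]\cap\overline{V_x}$ lies in some $\overline{V_p}$ with $p\in S(x,y)$ (by the covering $\overline{V_x}\subseteq\bigcup_{p\in S(x,y)}\overline{V_p}$ established just after Definition~\ref{def.S(x,y)}), whereupon $q,p\in[y]$ and $q\in\overline{V_p}$ force $q=p$ by \eqref{eq.closed voronoi characterization}; second, if $x\in P(G)$ then $V_x\subseteq Q_x\subseteq P(G)$, because any point of $Q_x$ has a unique nearest point $p\in[x]$, and its stabilizer lies inside the (trivial) stabilizer of $p$.

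To prove (a)$\Rightarrow$(d), I argue the contrapositive. Suppose $|S(x,y)|\geq2$ for some $x,y\in P(G)$, and pick distinct $q,q'\in S(x,y)$ together with $w\in V_q\cap V_x$ and $w'\in V_{q'}\cap V_x$. The segment $[w,w']$ lies in the convex set $V_x$; since $\overline{V_q}$ is disjoint from the open cell $V_{q'}$, the segment leaves $V_q$ at some point $u\in V_x$ with $u\in\overline{V_q}\setminus V_q$. Since $u\in\overline{V_q}\setminus V_q$ and $\overline{V_q}$ meets no other open cell $V_r$, we get $u\notin Q_y$, so $|M(u,y)|\geq2$; but $u\in V_x\subseteq P(G)$, hence $|C([u],[y])|=|M(u,y)|\geq2$, contradicting (a). Therefore $|S(x,y)|\leq1$ for all $x,y\in P(G)$, i.e.\ $\chi(G)=1$. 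For (d)$\Rightarrow$(b), fix $x\in P(G)$; for any $z\in P(G)$ we have $|M(z,x)|\leq|S(z,x)|\leq\chi(G)=1$, so $z$ has a unique nearest point in $[x]$, i.e.\ $z\in Q_x$. Thus $P(G)\subseteq Q_x\subseteq P(G)$, so $Q_x=P(G)$.

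For (b)$\Rightarrow$(c), fix $x_0\in P(G)$. Since $P(G)$ is open and dense, the closed convex polyhedral cones $\{\overline{V_p}\}_{p\in[x_0]}$ tile $\mathbb{R}^d$ with pairwise disjoint interiors $V_p$. The boundary $\partial V_{x_0}$ is disjoint from $P(G)=Q_{x_0}$, hence $\partial V_{x_0}\subseteq\mathbb{R}^d\setminus P(G)=\bigcup_{g\neq\operatorname{id}}\operatorname{Fix}(g)$. Thus the relative interior of each facet $F$ of $\overline{V_{x_0}}$, being connected and $(d-1)$-dimensional, lies in a single $\operatorname{Fix}(r_F)$ of dimension $\geq d-1$; as $r_F\neq\operatorname{id}$ is orthogonal, $r_F$ must be the reflection in the hyperplane $\operatorname{Fix}(r_F)$, which is forced to be the hyperplane spanned by $F$. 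Then $r_F$ fixes $F$ pointwise and exchanges its two half-spaces, so $r_F\overline{V_{x_0}}=\overline{V_{r_Fx_0}}$ is the cell sharing $F$ with $\overline{V_{x_0}}$ (and $r_Fx_0\neq x_0$ since $x_0\in P(G)$). Hence every facet-adjacency of cells is realized by a reflection, and by $G$-equivariance the transition element between any pair of adjacent cells is conjugate to some $r_F$, hence itself a reflection. Since the dual graph of the tiling is connected and $G$ acts simply transitively on the cells (Lemma~\ref{lem.characterization of P(G)}), tracing a path of adjacent cells from $\overline{V_{x_0}}$ to $\overline{V_{gx_0}}$ writes each $g\in G$ as a product of reflections; thus $G$ is a reflection group.

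For (c)$\Rightarrow$(a), if $G$ is a reflection group then Example~\ref{ex.voronoi reflection} gives $\overline{V_x}=\bigcup_{\overline W\in\mathcal W_x}\overline W$, so $M(x,y)=[y]\cap\overline{V_x}$ is the set of points $[y]\cap\overline W$ for $\overline W\in\mathcal W_x$ (each a singleton, by the fundamental-domain property of a closed chamber). The stabilizer $\operatorname{stab}_G(x)$ is the parabolic subgroup generated by the reflections whose mirrors contain $x$; it acts transitively on $\mathcal W_x$ and sends $[y]\cap\overline W$ to $[y]\cap\overline{gW}$, hence acts transitively on $M(x,y)$, giving $|C([x],[y])|=1$ by Lemma~\ref{lem.minimal geodesic bijection}. (Equivalently, $\mathbb{R}^d/G$ is isometric to a closed Weyl chamber, a convex cone, which has a unique minimal geodesic between any two points.) I expect (b)$\Rightarrow$(c) to be the main obstacle: one must carefully pass from the identity $Q_{x_0}=P(G)$ to the picture of $\overline{V_{x_0}}$ as a chamber of a mirror arrangement — identifying each facet with a reflecting hyperplane, checking that crossing it is effected by the corresponding reflection, and combining connectivity of the tiling's dual graph with the simply-transitive action to generate $G$ by reflections. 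A minor secondary point is citing cleanly the classical transitivity of a point stabilizer on the chambers containing it, used in (c)$\Rightarrow$(a).
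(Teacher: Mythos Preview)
Your proof is correct, and it takes a genuinely different route from the paper's in two places.

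First, the paper proves (a)$\Rightarrow$(b) directly and treats (b)$\Leftrightarrow$(d) separately, whereas you route (a)$\Rightarrow$(d)$\Rightarrow$(b). Your (a)$\Rightarrow$(d) via the segment argument is a pleasant geometric alternative: the key inclusion $M(x,y)\subseteq S(x,y)$ that you isolate up front does the work both here and in (d)$\Rightarrow$(b), and the boundary point $u\in\overline{V_q}\setminus V_q$ on $[w,w']$ cleanly produces a principal point with two nearest orbit points.

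Second, and more substantially, your (b)$\Rightarrow$(c) differs from the paper's. The paper argues that $P(G)^c$ lies in a finite union of perpendicular-bisector hyperplanes $H_i$ (one for each pair $p\neq q$ in any principal orbit), then shows $r_i\in G$ by a pigeonhole trick: pick $|G|(d-1)+1$ generic principal points so that every $d\times d$ submatrix of any $\{g_jx_j\}$ is invertible, and force some $g\in G$ to agree with $r_i$ on a basis. A counting argument on connected components of $P(G)=P(R)$ then gives $R=G$. You instead identify each facet hyperplane $H_F$ directly as $\operatorname{Fix}(r_F)$ for some $r_F\in G$ (since a relatively open $(d-1)$-dimensional set cannot be covered by finitely many proper subspaces), observe that $r_F$ swaps $\overline{V_{x_0}}$ with its facet-neighbor $\overline{V_{r_Fx_0}}$, and use connectivity of the adjacency graph of the tiling together with simple transitivity to write every $g\in G$ as a product of such reflections. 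Your approach is more geometric and avoids the generic-point/pigeonhole device; the paper's avoids invoking connectivity of the dual graph. Both are short once set up. Your (c)$\Rightarrow$(a) via transitivity of the parabolic stabilizer on $\mathcal{W}_x$ is the unpacked version of the paper's one-line appeal to the isometry $\mathbb{R}^d/G\cong\overline{C}$.
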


\begin{proof}
(a)$\Rightarrow$(b).
Fix $x\in P(G)$.
Then by Lemma~\ref{lem.characterization of P(G)}(b), the action of $G$ on $Q_x$ is free, and so $Q_x\subseteq P(G)$.
For the other subset inclusion, select $y\in P(G)$.
Then $|C([x],[y])|=1$, and since $x\in P(G)$, we have $|\operatorname{stab}_G(x)|=1$, and so Lemma~\ref{lem.minimal geodesic bijection} implies
\[
|\arg\max_{q\in[y]}\langle q,x\rangle|
=|\arg\min_{q\in[y]}\|q-x\||
=1.
\]
Thus, $y\in Q_x$ by \eqref{eq.disjoint union of cones}.

(b)$\Rightarrow$(c).
By assumption and since $Q_x$ is the disjoint union of open Voronoi polyhedral cones, $P(G)^c$ is contained in a union of hyperplanes $H_i$ such that for every $x\in P(G)$ and every $i$, there exist $p,q\in[x]$ with $p\neq q$ such that $H_i$ is the orthogonal complement of $p-q$.
Fix $i$, and let $r_i$ denote the reflection that fixes $H_i$.
Then for every $x\in P(G)$, there exist $a_x,b_x\in[x]$ with $a_x\neq b_x$ such that $r_ia_x=b_x$.
We claim that $r_i\in G$.

Put $n:=|G|(d-1)+1$.
Select generic $x_1,\ldots,x_n\in P(G)$ so that every $d\times d$ submatrix of $\{g_jx_j\}_{i=1}^n$ is invertible for every $\{g_j\}_{j=1}^n\in G^n$.
Then every $d\times d$ submatrix of $\{a_{x_j}\}_{j=1}^n$ is invertible.
By pigeonhole, there exist $g\in G$ and $J\subseteq\{1,\ldots,n\}$ such that $ga_{x_j}=b_{x_j}=r_ia_{x_j}$ for all $j\in J$ and $\{a_{x_j}\}_{j\in J}$ is a basis for $\mathbb{R}^d$.
Then $g=r_i$, and so $r_i\in G$.

Thus, $G$ contains the subgroup $R$ generated by the reflections $r_i$.
To show $R=G$, first note that $R\leq G$ implies $P(G)\subseteq P(R)$.
Furthermore, by construction, $P(G)^c\subseteq \bigcup_iH_i\subseteq P(R)^c$.
It follows that $P(R)=P(G)$.
As a reflection group, $R$ acts freely and transitively on the connected components of $P(R)$, namely, the Weyl chambers of $R$.
Also, $G$ acts freely and transitively on the connected components of $Q_x=P(G)$ by Lemma~\ref{lem.characterization of P(G)}(b).
Thus, the sizes of $R$ and $G$ both equal the number of connected components, and so $R=G$.

(c)$\Rightarrow$(a).
Suppose $G$ is a reflection group, and let $C\subseteq\mathbb{R}^d$ denote any Weyl chamber of $G$.
By Lemma~8 in~\cite{MixonP:22}, $\mathbb{R}^d/G$ is isometrically isomorphic to the convex set $\overline{C}$, and so there is a unique minimal geodesic between any $[x],[y]\in\mathbb{R}^d/G$.

(b)$\Rightarrow$(d).
Select $x,y\in P(G)$.
Then by assumption, $Q_x=P(G)=Q_y$, meaning for every $p\in[x]$ and $q\in[y]$, either $V_p\cap V_q=\varnothing$ or $V_p=V_q$.
By Lemma~\ref{lem.characterization of P(G)}(b), $G$ acts freely and transitively on $\{V_q\}_{q\in[y]}$, and so there is a unique $q\in[y]$ such that $V_q\cap V_x\neq\varnothing$.
Thus, $|S(x,y)|=1$.
Since $x$ and $y$ were arbitrary, the result follows.

(d)$\Rightarrow$(b).
Select $x,y\in P(G)$.
By assumption $|S(x,y)|\leq 1$.
Since $V_x$ is open and $Q_y$ is both open and dense, it follows that $|S(x,y)|\geq1$.
Thus, there is a unique $q\in[y]$ such that $V_q$ intersects nontrivially with $V_x$.
It follows that $V_x$ is contained in $(\bigcup_{p\in[y]\setminus\{q\}}V_p)^c$, the interior of which is $V_q$.
As such, $V_x\subseteq V_q$.
Taking the orbit under $G$ gives $Q_x\subseteq Q_y$.
By symmetry, $Q_y\subseteq Q_x$.
Since $x$ and $y$ were arbitrary, there exists $S$ such that $Q_x=S$ for all $x\in P(G)$.
Since $x\in Q_x=S$ for all $x\in P(G)$, we have $P(G)\subseteq S$.
By Lemma~\ref{lem.characterization of P(G)}(b), the action of $G$ on $Q_x$ is free whenever $x\in P(G)$, and so $S=Q_x\subseteq P(G)$.
The result follows.
\end{proof}

To prove Theorem~\ref{thm.max filtering pos def}, we will additionally make use of the following two propositions from the literature. For the following proposition, see Lemma~2.1 in~\cite{BergCR:84} combined with Theorem~1 in~\cite{HeinLB:04}.

\begin{proposition}
\label{prop.pos def embedding}
For $G\leq\operatorname{O}(d)$ with closed orbits, the following are equivalent:
\begin{itemize}
\item[(a)]
Max filtering is positive definite.
\item[(b)]
There exists a Hilbert space $\mathcal{H}$ and a continuous isometric embedding $\Psi\colon \mathbb{R}^d/G\to\mathcal{H}$.
\end{itemize}
\end{proposition}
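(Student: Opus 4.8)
The plan is to reduce both implications to the polarization identity $d^2([x],[y])=\|x\|^2+\|y\|^2-2\llangle[x],[y]\rrangle$ (Lemma~2(f) in~\cite{CahillIMP:22}), together with two elementary facts: setting $[x]=[y]$ in the identity gives $\llangle[x],[x]\rrangle=\|x\|^2$, and since the orbit of the origin is the singleton $\{0\}$ we have $\llangle[x],[0]\rrangle=0$ and $d([x],[0])=\|x\|$. Writing $K([x],[y]):=\llangle[x],[y]\rrangle$, this yields the key identity
\[
K([x],[y])=\tfrac12\big(d^2([x],[0])+d^2([y],[0])-d^2([x],[y])\big),
\]
which is precisely the base-pointed correspondence (Lemma~2.1 in~\cite{BergCR:84}) between a positive definite kernel and a conditionally negative definite kernel, with base point $[0]$.

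For (a)$\Rightarrow$(b): assuming $K$ is positive definite, the Moore--Aronszajn theorem supplies a Hilbert space $\mathcal H$ (namely the RKHS of $K$) and the feature map $f([x]):=K(\cdot,[x])$ with $\langle f([x]),f([y])\rangle_{\mathcal H}=K([x],[y])$. Then $\|f([x])-f([y])\|_{\mathcal H}^2=K([x],[x])-2K([x],[y])+K([y],[y])=\|x\|^2+\|y\|^2-2\llangle[x],[y]\rrangle=d^2([x],[y])$, so $f$ is already an isometric embedding of $\mathbb{R}^d/G$ into $\mathcal H$; no recentering is needed, precisely because $K([x],[0])=0$. Continuity comes for free: an isometry between metric spaces is continuous (indeed a homeomorphism onto its image), and the quotient topology of $\mathbb{R}^d/G$ coincides with the metric topology induced by $d$.

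For (b)$\Rightarrow$(a): given a continuous isometric embedding $\Psi\colon\mathbb{R}^d/G\to\mathcal H$, define $g([x]):=\Psi([x])-\Psi([0])$. Using $\|g([x])\|_{\mathcal H}^2=d^2([x],[0])=\|x\|^2$ and the Hilbert-space polarization identity, the displayed identity above rewrites as $K([x],[y])=\langle g([x]),g([y])\rangle_{\mathcal H}$, so $K$ is a Gram kernel and hence positive definite; equivalently, $\sum_{i,j}c_ic_j\,d^2([x_i],[x_j])=-2\|\sum_i c_i\Psi([x_i])\|_{\mathcal H}^2\le 0$ whenever $\sum_i c_i=0$, i.e.\ $d^2$ is conditionally negative definite, and Lemma~2.1 in~\cite{BergCR:84} applies.

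I do not expect a genuine obstacle: the only care needed is in verifying that $[0]$ is a legitimate base point (i.e.\ that $\llangle[x],[0]\rrangle=0$ and $\llangle[x],[x]\rrangle=\|x\|^2$), in observing that the RKHS feature map is automatically isometric rather than requiring a shift, and in recording that continuity of the embedding is free. If one instead prefers a black-box argument, the statement follows immediately by combining the base-point correspondence (Lemma~2.1 in~\cite{BergCR:84}) between positive definite $K$ and conditionally negative definite $d^2$ with Schoenberg's theorem in the form of Theorem~1 in~\cite{HeinLB:04}, which characterizes isometric Hilbert-embeddability of a metric space by conditional negative definiteness of its squared metric and yields a continuous embedding whenever the metric is continuous.
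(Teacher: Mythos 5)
Your proof is correct, and it is a fleshed-out version of the paper's argument: the paper proves this proposition purely by citation, combining the base-point correspondence between positive definite and conditionally negative definite kernels (Lemma~2.1 in~\cite{BergCR:84}) with the Schoenberg-type characterization of isometric Hilbert embeddability (Theorem~1 in~\cite{HeinLB:04}) --- exactly the ``black-box'' route you mention at the end. What you add is a self-contained verification that exploits the special structure of max filtering: the polarization identity $d^2([x],[y])=\|x\|^2+\|y\|^2-2\llangle[x],[y]\rrangle$ together with $\llangle[x],[0]\rrangle=0$ and $\llangle[x],[x]\rrangle=\|x\|^2$ shows that $[0]$ is a canonical base point at which the kernel vanishes, so the Moore--Aronszajn feature map is \emph{already} an isometry (no recentering), and conversely any isometric embedding, once translated so that $[0]\mapsto 0$, exhibits $\llangle\cdot,\cdot\rrangle$ as a Gram kernel via Hilbert-space polarization. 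Both directions check out (including the continuity remark: $x\mapsto K(\cdot,[x])$ is $1$-Lipschitz against $\|x-y\|$, so continuity with respect to the quotient topology is automatic), so the only difference from the paper is that your argument is elementary and self-contained where the paper defers to the general theory of conditionally negative definite kernels; the citation route is shorter, while yours makes transparent why no generality beyond the Gram-kernel computation is needed here.
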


The next proposition gives information about the projection of the principal points onto the orbit space.
Here, (a) follows from Theorem~3.82 in~\cite{AlexandrinoB:15}, (b) follows from Exercise~3.81 in~\cite{AlexandrinoB:15}, and (c) follows from Section~2.3 in~\cite{GordoskiL:14}.

\begin{proposition}
\label{prop.[P(G)]}
Suppose $G\leq\operatorname{O}(d)$ has closed orbits.
Denote $[P(G)]:=\{[x]:x\in P(G)\}$.
\begin{itemize}
\item[(a)]
$[P(G)]$ is an open and dense manifold in $\mathbb{R}^d/G$.
\item[(b)]
$[P(G)]$ admits a Riemannian structure whose geodesic metric is the quotient metric.
\item[(c)]
$[P(G)]$ has constant curvature $0$ if and only if $G$ is polar.
\end{itemize}
\end{proposition}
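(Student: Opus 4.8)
The final statement to prove is Proposition~\ref{prop.[P(G)]}: for $G\leq\operatorname{O}(d)$ with closed orbits, with $[P(G)]:=\{[x]:x\in P(G)\}$, we have (a) $[P(G)]$ is open and dense in $\mathbb{R}^d/G$, (b) it admits a Riemannian structure whose geodesic metric is the quotient metric, and (c) it has constant curvature $0$ iff $G$ is polar.

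The plan is to derive the three parts from the structure theory of isometric actions, indicating why the references cited in the statement apply; by Proposition~\ref{prop.closed subgroups} we may assume $G$ is compact throughout.

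\emph{Part (a).} The set $P(G)$ is $G$-invariant, and it is open and dense in $\mathbb{R}^d$: its complement is the union of the $1$-eigenspaces of the non-identity elements of $G$, which is a closed set with empty interior by Theorem~1.32 in~\cite{Meinrenken:03}. Since the quotient map $\pi\colon\mathbb{R}^d\to\mathbb{R}^d/G$ is open, continuous and surjective, $[P(G)]=\pi(P(G))$ is open and dense in $\mathbb{R}^d/G$. On $P(G)$ the action is free by definition, hence proper (as $G$ is compact), so the quotient manifold theorem makes $P(G)/G$ a smooth manifold with $\pi|_{P(G)}$ a smooth submersion; this is Theorem~3.82 in~\cite{AlexandrinoB:15}.

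\emph{Part (b).} Give $P(G)$ the Riemannian metric restricted from $\mathbb{R}^d$. Since $G$ acts by isometries and $\pi|_{P(G)}$ is a submersion, there is a unique metric on $[P(G)]$ turning $\pi|_{P(G)}$ into a Riemannian submersion, obtained by identifying $T_{[x]}[P(G)]$ with the horizontal space $H_x=(T_x([x]))^\perp\subseteq\mathbb{R}^d$. Let $d_g$ denote its geodesic metric. Any path in $[P(G)]$ from $[x]$ to $[y]$ admits a horizontal lift to a path in $P(G)\subseteq\mathbb{R}^d$ of the same length joining a point of $[x]$ to a point of $[y]$, so $d_g([x],[y])\ge\inf_{p\in[x],\,q\in[y]}\|p-q\|=d([x],[y])$. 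Conversely, by the description of minimal geodesics used in the proof of Lemma~\ref{lem.minimal geodesic bijection} (Proposition~3.1 in~\cite{AlekseevskyKLM:03}), $d([x],[y])$ is approximated by the lengths of projections of straight segments $[p,q]$ with $p\in[x]$, $q\in[y]$, and since $x,y$ are principal such segments can be chosen with image contained in $[P(G)]$ (this uses the polyhedral/stratified structure of the orbit decomposition near principal points, as in the Weyl-chamber picture); projecting such a segment gives $d_g([x],[y])\le d([x],[y])$. Together, $d_g=d|_{[P(G)]}$, which is the content of Exercise~3.81 in~\cite{AlexandrinoB:15}.

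\emph{Part (c), ``if''.} If $G$ is polar with section $\Sigma$, then by Proposition~\ref{prop.polar}(a) the orbit space $\mathbb{R}^d/G$ is isometric to $\Sigma/W$ with $W=W(\Sigma,G)$ finite and $\Sigma\cong\mathbb{R}^k$. The principal stratum of $\Sigma/W$ is a flat Riemannian manifold (locally the free quotient of flat space by a finite isometry group), and by the Myers--Steenrod theorem the metric isometry $\mathbb{R}^d/G\to\Sigma/W$ restricts to a Riemannian isometry on the manifold (principal) parts; hence $[P(G)]$ is flat.

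\emph{Part (c), ``only if'' --- the main obstacle.} Suppose $[P(G)]$ is flat. Applying O'Neill's curvature formula to the Riemannian submersion $\pi\colon P(G)\to[P(G)]$, whose total space $P(G)$ is flat, gives $0=\langle\bar R(X,Y)Y,X\rangle=3\,|A_XY|^2$ for horizontal $X,Y$, so O'Neill's integrability tensor $A$ vanishes identically; thus the horizontal distribution $H$ is integrable with totally geodesic leaves, i.e.\ each leaf is an open subset of an affine subspace of $\mathbb{R}^d$ meeting the nearby orbits orthogonally. The delicate remaining step is to upgrade the leaf through a principal point to a genuine section --- a linear subspace meeting \emph{every} orbit of $G$ orthogonally --- which requires globalizing across the singular orbits using completeness of $\mathbb{R}^d$ and the semialgebraic structure of the orbit stratification; this is precisely what is carried out in Section~2.3 of~\cite{GordoskiL:14}, and it completes the proof. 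I expect this globalization across singular strata to be the principal difficulty; the rest is a routine assembly of standard Riemannian-submersion and quotient-manifold facts.
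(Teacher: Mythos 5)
The paper never argues this proposition itself: it is imported wholesale from the literature ((a) and (b) from Theorem~3.82 and Exercise~3.81 in~\cite{AlexandrinoB:15}, (c) from Section~2.3 in~\cite{GordoskiL:14}), so the question is whether your reconstruction is sound at the stated level of generality, namely for \emph{every} $G\leq\operatorname{O}(d)$ with closed orbits. For (a) and (b) it is not. In that generality $P(G)$ must be read as the principal stratum (points of principal orbit type), not as the set of points with trivial stabilizer, and your two key steps --- that $P(G)^c$ is a union of $1$-eigenspaces of nonidentity elements, and that the action on $P(G)$ is free so the free-and-proper quotient manifold theorem applies --- both fail once $G$ is infinite with nontrivial principal isotropy. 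Concretely, for the conjugation action of Example~\ref{ex.symmetric matrices} the stabilizer of a diagonal matrix with distinct eigenvalues contains every diagonal sign matrix, so the trivial-stabilizer locus is empty and your $[P(G)]$ would be empty rather than open and dense; the same happens for $\operatorname{SO}(3)$ acting on $\mathbb{R}^3$. The correct route, and what the cited Theorem~3.82 actually supplies, is the slice theorem/principal orbit type theorem: the principal stratum is open, dense and $G$-invariant, the restriction of $\pi$ to it is a fiber bundle with fiber $G/H$ ($H$ the principal isotropy), and the quotient of the principal stratum is a manifold carrying the submersion metric. Your Riemannian-submersion construction in (b) survives this correction, but not via freeness.

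Two further points. In (b), the inequality $d_g\leq d$ is not obtained by ``choosing'' distance-realizing segments with image in $[P(G)]$: the substantive fact is that the principal part is geodesically convex in $\mathbb{R}^d/G$ (minimal geodesics between principal points do not leave it), which is exactly the content of the cited Exercise~3.81 and is not delivered by the Weyl-chamber picture you gesture at, since that again presumes $G$ finite. In (c), the O'Neill computation correctly reduces flatness of $[P(G)]$ to vanishing of the integrability tensor, hence to horizontal leaves that are open pieces of affine subspaces meeting nearby orbits orthogonally; but the passage from such a local leaf to a genuine section meeting every orbit orthogonally is the entire difficulty, and you delegate it to Section~2.3 of~\cite{GordoskiL:14} --- the very citation the paper relies on. That is legitimate, but it means your proposal establishes no more of (c) than the paper's citation does, while (a) and (b) as written need the repair above.
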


We are now ready to prove the main result.

\begin{proof}[Proof of Theorem~\ref{thm.max filtering pos def}]
(a)$\Rightarrow$(b).
By Proposition~\ref{prop.pos def embedding}, there exists a continuous isometric embedding $\Psi$ of $\mathbb{R}^d/G$ into a Hilbert space $\mathcal{H}$.
By Proposition~\ref{prop.[P(G)]}, $[P(G)]$ is an open and dense Riemannian manifold in $\mathbb{R}^d/G$.
Denote $n:=\operatorname{dim}([P(G)])$ and fix $x\in [P(G)]$.
We assume without loss of generality that $\Psi(x)=0\in\mathcal{H}$ by translating if necessary.
By Lemma~5.12 and Proposition~6.1 in~\cite{Lee:06}, there is a \textit{uniformly normal neighborhood} $U$ of $x$ in $[P(G)]$, which in turn implies that $U$ is an open neighborhood of $x$ with homeomorphism $\phi\colon U\to\mathbb{R}^n$, and furthermore, $U$ is \textit{geodesically convex} in the sense that for every $p,q\in U$, there exists a minimal geodesic in $U$ that connects $p$ to $q$.

Since $\Psi$ is an isometry, $\Psi(U)$ is convex.
Select an open ball $B$ in the relative interior of $\Psi(U)$.
Since $\phi\circ\Psi^{-1}$ continuously embeds $B$ into $\mathbb{R}^n$, invariance of domain implies that the affine dimension of $B$ is at most $n$.
Since $0\in\Psi(U)$, it follows that the dimension of the span of $\Psi(U)$ is at most $n$.
This shows that $U$ is isometric to a subset of a finite-dimensional Euclidean space, meaning $U$ has constant curvature $0$.
Proposition~\ref{prop.[P(G)]}(c) then gives that $G$ is polar.
Note that $|C([x],[y])|=1$ for every $x,y\in\mathbb{R}^d$ due to the isometric embedding $\Psi$.
The result then follows from Proposition~\ref{prop.polar}(a) and Lemma~\ref{lem.reflection group characterization}.

(b)$\Rightarrow$(a).
By assumption, $\mathbb{R}^d/G$ is isometric to $\Sigma/H$, which in turn is isometric to a closed Weyl chamber of $H$.
This gives a continuous isometric embedding $\Psi\colon\mathbb{R}^d/G\to\Sigma$, and so the result follows from Proposition~\ref{prop.pos def embedding}.
\end{proof}

\section{Discussion}
\label{sec.discussion}

In this paper, we studied the injectivity, stability, and positive definiteness of max filtering.
We established in Theorem~\ref{thm.injectivity} that $2d$ generic templates suffice for the corresponding max filter bank to be injective.
Later, we established in Theorem~\ref{thm.generic bilipschitz} that $\chi(G)\cdot(d-1)+1$ generic templates suffice for the corresponding max filter bank to be bilipschitz (and therefore injective).
The latter result requires fewer templates than the former when $\chi(G)\leq2$, and this gives the correct threshold for several choices of $G$.
It would be interesting to determine this \textit{generic injectivity threshold} for every $G$.
In the case where $G\leq\operatorname{U}(k)\leq\operatorname{O}(2k)$ is defined by $G:=\{\omega I_k:\omega\in\mathbb{C},|\omega|=1\}$, this threshold $\tau(k)$ is an open problem in (complex) phase retrieval~\cite{BandeiraCMN:14}.
It is known that $\tau(k)\leq 4k-4$ with equality when $k$ is one more than a power of $2$~\cite{ConcaEHV:15}, and otherwise $\tau(k)>4k-4-2\alpha$, where $\alpha$ is the number of $1$'s in the binary expansion of $k-1$~\cite{HeinosaariMW:13}.
In the case where $k=4$, it appears as though a nongeneric choice of $4k-5=11$ templates gives an injective max filter bank~\cite{Vinzant:15}.

Next, Theorem~\ref{thm.lower lipschitz bound} gives a lower Lipschitz bound for max filter banks when $G$ is finite.
Furthermore, by Lemma~\ref{lem.lower lipschitz optimal cases}, this bound is optimal for several choices of $G$.
It would be interesting to determine a lower Lipschitz bound in cases where $G$ is infinite.
However, such a bound is also an open problem in the same complex phase retrieval setting where $G:=\{\omega I_k:\omega\in\mathbb{C},|\omega|=1\}$~\cite{BandeiraCMN:14}.
Another open problem in this neighborhood is Problem~19 in~\cite{CahillIMP:22}, which asks whether an injective max filter bank is necessarily bilipschitz.
We note that while explicit lower Lipschitz bounds are not known in the setting of complex phase retrieval, a compactness argument gives that injectivity implies bilipschitz; see Proposition~1.4 in~\cite{CahillCD:16}.

In Theorems~\ref{thm.generic bilipschitz} and~\ref{thm.distortion from random templates}, we estimated how many generic templates are needed for a max filter bank to be bilipschitz, and we estimated the distortion of max filter banks from Gaussian templates.
Both estimates are given in terms of the Voronoi characteristic $\chi(G)$.
In Lemma~\ref{lem.reflection group characterization}, we established that $\chi(G)=1$ precisely when $G$ is a finite reflection group.
For which $G$ does it hold that $\chi(G)=2$?
More generally, it would be interesting to develop tools to estimate the Voronoi characteristic of any given group.
Unfortunately, linear programming experiments suggest that $|S(x,y)|$ is not always essentially constant (e.g., when $G$ is the symmetry group of the regular tetrahedron in $\mathbb{R}^3$).
Finally, Lemma~\ref{lem.reflection group characterization} identifies a simple relationship between $|C([x],[y])|$ and $\chi(G)$, and it would be interesting to establish a deeper relationship between these quantities.

In Section~\ref{sec.pos def}, we detected whether max filtering is positive definite according to whether the orbit space is Euclidean.
For this, we used existing results on the geometry of isometric group actions~\cite{Dadok:85,Michor:97,Grove:02,AlekseevskyKLM:03,Meinrenken:03,DiazRamos:08,Michiels:14,AlexandrinoB:15} and on the polarity and geometry of orthogonal Lie group representations~\cite{LytchakT:10,GordoskiL:14,GordoskiL:16}.
This literature might offer a gateway to even deeper results about max filtering.
For example, it would be interesting to establish a connection between max filtering and infinitesimal polarity~\cite{GordoskiL:16}.

\section*{Acknowledgments}

DGM was supported by NSF DMS 2220304. The authors thank Radu Balan, Ben Blum-Smith, Jameson Cahill, Joey Iverson, Tom Needham, Dan Packer, Clay Shonkwiler, and Soledad Villar for enlightening discussions. The authors thank the anonymous referees for various suggestions that substantially improved the presentation of our results.

\appendix

\section{Proof of Lemma~\ref{lem.characterization of P(G)}}
\label{sec.proof of principal points}

(a)$\Rightarrow$(b).
Select $x\in\mathbb{R}^d$, $y\in V_x$, and $g\in G$.
Then \eqref{eq.open voronoi characterization} gives
\[
\arg\min_{q\in[gx]}\|q-gy\|
=\arg\min_{q\in[x]}\|g^{-1}q-y\|
=\arg\min_{gr\in[x]}\|r-y\|
=g\cdot\arg\min_{r\in[x]}\|r-y\|
=\{gx\},
\]
which in turn implies $gy\in V_{gx}$.
In the case where $x\in P(G)$, we have $|G|$ different Voronoi cells $\{V_{gx}\}_{g\in G}$, each containing a different member of $[y]$.
It follows that $G$ acts freely and transitively on $\{V_p\}_{p\in[x]}$.

(b)$\Rightarrow$(c).
Suppose $G$ acts freely and transitively on $\{V_p\}_{p\in[x]}$, and select $y\in V_x$.
Then every member of $[y]\setminus\{y\}$ resides in some $V_p\subseteq(\overline{V_x})^c$, and so 
\[
\{y\}
=[y]\cap\overline{V_x}
=\arg\min_{p\in[y]}\|p-x\|.
\]
Then $x\in V_y$ by~\eqref{eq.open voronoi characterization}.

(c)$\Rightarrow$(a).
Suppose $x\not\in P(G)$.
Then there exists $g\neq\operatorname{id}$ such that $gx=x$.
Since $V_x$ is open and $P(G)$ is dense, we may select $y\in V_x\cap P(G)$.
Since $y\in V_x\subseteq\overline{V_x}$, the symmetric condition \eqref{eq.closed voronoi characterization} implies $x\in\overline{V_y}$, and so $x=gx\in\overline{V_{gy}}$.
Since $y\in P(G)$, we have $gy\neq y$, meaning $\overline{V_{gy}}\subseteq V_y^c$, and so $x\in V_y^c$.

\section{Proof of Lemma~\ref{lem.equivalence Sxy}}
\label{sec.proof of Sxy}

(a)$\Rightarrow$(b).
Select $q\in[y]\setminus T$.
Since $S(x,y)\subseteq T$, it follows that $V_q\cap V_x=\varnothing$.
Thus, $V_x^c\supseteq V_q$, and since $V_x^c$ is closed, we have $V_x^c\supseteq \overline{V_q}$, meaning $\overline{V_q}\cap V_x=\varnothing$.
As such,
\[
V_x
\subseteq\mathbb{R}^d\setminus\bigg(\bigcup_{q\in[y]\setminus T}\overline{V_q}\bigg)
\subseteq\bigcup_{p\in T}\overline{V_p}.
\]
The result now follows from the fact that the right-hand side is closed.

(b)$\Rightarrow$(a).
Select $q\in [y]\setminus T$.
Then our assumption on $T$ implies
\[
V_x
\subseteq\overline{V_x}
\subseteq\bigcup_{p\in T}\overline{V_p}
\subseteq\bigcup_{p\in [y]\setminus\{q\}}\overline{V_p}
=V_q^c.
\]
As such, $V_q\cap V_x=\varnothing$, and so $q\not\in S(x,y)$.

(b)$\Rightarrow$(c).
Take $v\in\arg\max_{p\in[z]}\langle p,x\rangle$.
Then $v\in\overline{V_x}$, and so by assumption, there exists $w\in T$ such that $v\in\overline{V_w}$, meaning $w\in\arg\max_{q\in[y]}\langle q,v\rangle$.
Then $w$ witnesses the desired nonemptiness.

(c)$\Rightarrow$(b).
Suppose $x\in P(G)$ and $z\in V_x$.
Then Lemma~\ref{lem.characterization of P(G)} gives that $x\in V_z$.
By the characterization~\eqref{eq.open voronoi characterization} of $V_z$, it follows that $\{z\}=\arg\max_{p\in[z]}\langle x,p\rangle$.
By assumption, there exists $v\in\{z\}$ such that $T\cap\arg\max_{q\in[y]}\langle q,v\rangle\neq\varnothing$.
That is, there exists $p\in T$ such that $z\in\overline{V_p}$.
This shows that $V_x\subseteq\bigcup_{p\in T}\overline{V_p}$, and so we are done by taking closures.

\section{Proof of Lemma~\ref{lem.lower lipschitz optimal cases}}
\label{sec.proof of optimal lower lipschitz}

\subsection*{Lemma~\ref{lem.lower lipschitz optimal cases}(c)}

First, $P(G)$ is the union $W$ of open Weyl chambers of $G$, and for each $z\in \mathbb{R}^d$, it holds that $W\subseteq Q_z$.
Thus, $\mathcal{O}(\{z_i\}_{i=1}^n,G)=W$.
For each $x\in W$, $v_i(x)$ is the member of $[z_i]$ that resides in the closed Weyl chamber $\overline{V_x}$.
Similarly, for each $x\in W$ and $y\in\mathbb{R}^d$, the unique element of $S(x,y)$ is the member of $[y]$ that resides in $\overline{V_x}$, and so the only function in $\mathcal{F}(x,y)$ maps each $i\in\{1,\ldots,n\}$ to this member of $[y]$.
It follows that
\[
\max_{f\in\mathcal{F}(x,y)}\sum_{w\in S(x,y)}\lambda_{\min}\bigg(\sum_{i\in f^{-1}(w)} v_i(x)v_i(x)^\top\bigg)
=\lambda_{\min}\bigg(\sum_{i=1}^n v_i(x)v_i(x)^\top\bigg).
\]
Considering $\{v_i(x)\}_{i=1}^n$ is obtained by applying $G$ to send each $z_i$ into $x$'s closed Weyl chamber, and considering $G$ acts transitively on the Weyl chambers, it follows that the right-hand side above is constant for $x\in W$, and $\alpha(\{z_i\}_{i=1}^n,G)$ is the square root of this constant.
The result follows by taking any $x\in W$, selecting a bottom eigenvector $v$ of $\sum_{i=1}^n v_i(x)v_i(x)^\top$, and then putting $y:=x+tv$ with $t$ small enough so that $y\in V_x$.

\subsection*{Lemma~\ref{lem.lower lipschitz optimal cases}(b)}
First, $P(G)=\mathbb{R}^d\setminus\{0\}$, and
\[
Q_z
=\left\{\begin{array}{cl}
\mathbb{R}^d&\text{if }z=0\\
\mathbb{R}^d\setminus\operatorname{span}\{z\}^\perp&\text{if }z\neq0.
\end{array}\right.
\]
For each $x\in\mathcal{O}(\{z_i\}_{i=1}^n,G)$, we have $v_i(x)
=\operatorname{sign}(\langle z_i,x\rangle) z_i$.
Also, for each $x,y\in\mathcal{O}(\{z_i\}_{i=1}^n,G)$, we have $S(x,y)=\{\pm y\}\setminus\operatorname{cone}\{-x\}$.
Since $v_i(x)v_i(x)^\top=z_iz_i^\top$ and $|S(x,y)|\leq 2$ regardless of $x,y\in\mathcal{O}(\{z_i\}_{i=1}^n,G)$, we have
\[
\alpha(\{z_i\}_{i=1}^n,G)
\geq\min_{I\sqcup J=[n]}\bigg(\lambda_{\min}\bigg(\sum_{i\in I}z_iz_i^\top\bigg)+\lambda_{\min}\bigg(\sum_{i\in J}z_iz_i^\top\bigg)\bigg)^{1/2}
=:\Delta.
\]
Following~\cite{BalanW:15}, we select $I$ and $J$ that minimize the right-hand side and take bottom eigenvectors $u$ of $\sum_{i\in I}z_iz_i^\top$ and $v$ of $\sum_{i\in J}z_iz_i^\top$, both of unit norm.
Selecting $x=u+v$ and $y=u-v$ then gives $\|x-y\|=2=\|x+y\|$, and so $d([x],[y])=2$.
An application of the reverse triangle inequality then gives
\begin{align*}
\|\Phi([x])-\Phi([y])\|^2
&=\sum_{i\in I}\Big(|\langle z_i,x\rangle|-|\langle z_i,y\rangle|\Big)^2+\sum_{i\in J}\Big(|\langle z_i,x\rangle|-|\langle z_i,y\rangle|\Big)^2\\
&\leq\sum_{i\in I}|\langle z_i,x+y\rangle|^2+\sum_{i\in J}|\langle z_i,x-y\rangle|^2\\
&=4\Delta^2\\
&\leq \alpha(\{z_i\}_{i=1}^n,G)^2\cdot d([x],[y])^2.
\end{align*}
Combining with Theorem~\ref{thm.lower lipschitz bound} then gives equality in the above inequality.

\subsection*{Lemma~\ref{lem.lower lipschitz optimal cases}(a)}
First, we recall a theorem attributed to Leonardo da Vinci~\cite{Weyl:52} that $G$ is necessarily dihedral or cyclic.
The dihedral case is treated in (c), and the case where $G$ is cyclic and $|G|=2$ is treated in (b).
It remains to consider the case where $G$ is cyclic and $|G|\geq3$.

Since the proof is long, we divide it into steps and we give helpful insights at the end of every step. Figure~\ref{fig:Sxy illustration} could aid in visualization.\\

\vspace{-0.3cm}
\noindent \textbf{Step 1: Duality with singular value maximization.}

We take all the $z_i$'s to be nonzero without loss of generality.
Then $z_i\in P(G)$ for all $i$.
We claim that this implies that $\mathcal{F}(x,y)$ is a singleton set for all $x,y\in\mathcal{O}(\{z_i\}_{i=1}^n,G)$.
To see this, it suffices to show that $\arg\max_{q\in[y]}\langle q,v_i(x)\rangle$ is a singleton set for each $i$, which by \eqref{eq.disjoint union of cones} is equivalent to having $v_i(x)\in Q_y$ for each $i$.
Since $Q_y$ is $G$-invariant, this in turn is equivalent to having $z_i\in Q_y$ for each $i$.
Fix $i$ and recall that $y\in\mathcal{O}(\{z_i\}_{i=1}^n,G)\subseteq Q_{z_i}$, i.e., $y\in V_{gz_i}$ for some $g\in G$.
Then by Lemma~\ref{lem.characterization of P(G)}, it follows that $gz_i\in V_y$, i.e., $z_i\in Q_y$.

Write $G=\langle[\begin{smallmatrix}\cos\theta&-\sin\theta\\\sin\theta&\phantom{-}\cos\theta\end{smallmatrix}]\rangle$ for $\theta=\frac{2\pi}{m}$ with $m\geq3$ and observe that $d=2$ implies
\[
\lambda_{\min}\bigg(\sum_{i} v_i(x)v_i(x)^\top\bigg)
+\lambda_{\max}\bigg(\sum_{i} v_i(x)v_i(x)^\top\bigg)
=\operatorname{tr}\bigg(\sum_{i} v_i(x)v_i(x)^\top\bigg)
=\sum_{i}\|z_i\|^2.
\]
Since $|\mathcal{F}(x,y)|=1$ for all $x,y\in\mathcal{O}(\{z_i\}_{i=1}^n,G)$, we have
\begin{equation}
\label{eq.sup reformulation}
\sup_{x,y\in\mathcal{O}(\{z_i\}_{i=1}^n,G)}\sum_{w\in S(x,y)}\lambda_{\max}\bigg(\sum_{v_i(x)\in \overline{V_w}} v_i(x)v_i(x)^\top\bigg)
=\sum_{i=1}^n\|z_i\|^2-\alpha(\{z_i\}_{i=1}^n,G)^2.
\end{equation}

By virtue of this equation and up until the last step of the proof, we shall base our analysis on the optimizers the left-hand side of~\eqref{eq.sup reformulation}.\\

\vspace{-0.3cm}
\noindent \textbf{Step 2: Notational Setup.}

Each $x\in\mathcal{O}(\{z_i\}_{i=1}^n,G)$ determines $\{v_i(x)\}_{i=1}^n\in [z_1]\times\cdots\times[z_n]$, and each $x,y\in\mathcal{O}(\{z_i\}_{i=1}^n,G)$ determines a partition of $\{1,\ldots,n\}$ into subsets $\{A_w\}_{w\in S(x,y)}$ of the form
\[
A_w:=\{i:v_i(x)\in \overline{V_w}\}.
\]
Since there are finitely many choices of $\{v_i(x)\}_{i=1}^n$ and $\{A_w\}_{w\in S(x,y)}$, there necessarily exist $x,y\in\mathcal{O}(\{z_i\}_{i=1}^n,G)$ that maximize the left-hand side of \eqref{eq.sup reformulation}.
Fix such $x^\star$ and $y^\star$.
Without loss of generality, we change coordinates so that $x^\star$ has argument $\theta/2$, where $\theta:=2\pi/|G|$.
Then $0<\arg v_i(x^\star)<\theta$ for every $i\in\{1,\ldots,n\}$.
For each $y\in \mathcal{O}(\{z_i\}_{i=1}^n,G)$, there exists $\tau\in[0,\theta]$ such that $\{A_w\}_{w\in S(x^\star,y)}$ partitions indices $i\in\{1,\ldots,n\}$ according to whether $\arg v_i(x^\star)$ is strictly less than or strictly greater than $\tau$.

If $\{v_i(x^\star)\}_{i=1}^n$ are all positive multiples of each other, then $\alpha(\{z_i\}_{i=1}^n,G)=0$.
In this case, we may take $x=v_1(x^\star)$ and $y=v_1(x^\star)+w$ for some small enough $w\neq 0$ orthogonal to $\{v_i(x^\star)\}_{i=1}^n$ so that $y$ resides in their common Voronoi cell.
Then $\|\Phi([x])-\Phi([y])\|=0$.
Otherwise, $\{v_i(x^\star)\}_{i=1}^n$ contains a pair of linearly independent vectors.
Select $y$ so that the corresponding threshold $\tau$ splits the arguments of these two vectors.
Then
\begin{align*}
&\lambda_{\max}\bigg(\sum_{i=1}^n v_i(x^\star)v_i(x^\star)^\top\bigg)\\
&\qquad<\lambda_{\max}\bigg(\sum_{\arg v_i(x^\star)<\tau} v_i(x^\star)v_i(x^\star)^\top\bigg)
+\lambda_{\max}\bigg(\sum_{\arg v_i(x^\star)>\tau} v_i(x^\star)v_i(x^\star)^\top\bigg)\\
&\qquad\leq \sum_{w\in S(x^\star,y^\star)}\lambda_{\max}\bigg(\sum_{i\in A_w} v_i(x^\star)v_i(x^\star)^\top\bigg).
\end{align*}
As such, the optimal partition $\{A_w\}_{w\in S(x^\star,y^\star)}$ consists of two nonempty sets, which we denote by $A$ and $B$ for simplicity.
Let $\tau^\star\in(0,\theta)$ denote a corresponding threshold.
We denote the minimum and maximum arguments of $\{v_i(x^\star)\}_{i\in A}$ by $\phi_1$ and $\phi_2$, respectively.
We similarly denote the extreme arguments of $\{v_i(x^\star)\}_{i\in B}$ by $\psi_1$ and $\psi_2$:
\[
0
<\phi_1
<\phi_2
<\tau^\star
<\psi_1
<\psi_2
<\theta.
\]

\begin{figure}[t]
  \centering
  \includegraphics[scale = 1.25]{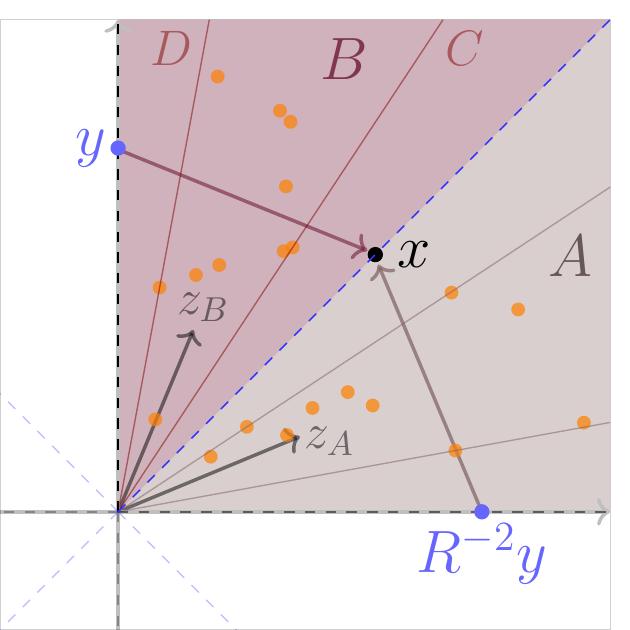}
  \caption{Illustration for the optimal partition in the proof of Lemma~\ref{lem.lower lipschitz optimal cases}(a). Here, $G\leq \operatorname O(2)$ consists of rotations by multiples of $\frac{\pi}{2}$ radians and the figure zooms in on the Voronoi cell $V_x$ given by the open first quadrant of the plane. The orange points in the portion of $V_x$ below (resp. above) the diagonal comprise $\{v_i(x)\}_{i\in A}$ (resp.\ $\{v_i(x)\}_{i\in B}$). The vectors $z_A$ and $z_B$ are top eigenvectors corresponding to $\sum_{i\in A} v_i(x)v_i(x)^\top$ and $\sum_{i\in B} v_i(x)v_i(x)^\top$ respectively. Moreover, $x = z_A + z_B$ and $y = tRx$, where $R$ is the counterclockwise rotation by $\frac{\pi}{4}$ which forces $d([x],[y]) = \|x-y\|=\|x-R^{-2}y\|$, and $t>0$ is chosen in such a way that $x-R^{-2}y$ is orthogonal to $z_A$ and $x-y$ is orthogonal $z_B$; hence, each is parallel to the bottom eigenvectors of $\sum_{i\in A} v_i(x)v_i(x)^\top$ and $\sum_{i\in B} v_i(x)v_i(x)^\top$ respectively.
  }
  \label{fig:upper aid}
\end{figure}

\noindent\textbf{Step 3. Acute Cones Claim.}

We claim that both $\phi_2-\phi_1<\pi/2$ and $\psi_2-\psi_1<\pi/2$.
If $m\geq4$, then this follows immediately from the fact that $\theta\leq\pi/2$.
When $m=3$, pigeonhole gives that at least one of $\phi_2-\phi_1<\pi/2$ or $\psi_2-\psi_1<\pi/2$ holds.
Without loss of generality, we assume $\phi_2-\phi_1<\pi/2$.
To prove $\psi_2-\psi_1<\pi/2$, we analyze the arguments of the top left singular vectors of $\{v_i(x^\star)\}_{i\in A}$ and $\{v_i(x^\star)\}_{i\in B}$ (interpreted as $2\times|A|$ and $2\times|B|$ matrices).
In particular, select unit vectors $z_A,z_B\in\mathbb{R}^2$ that maximize the quantities
\begin{equation}
\label{eq.top sing vecs}
\sum_{i\in A}|\langle v_i(x^\star),z_A\rangle|^2,
\qquad
\sum_{i\in B}|\langle v_i(x^\star),z_B\rangle|^2.
\end{equation}
Notice that the maximizers are unique up to a sign.
Since $\phi_2-\phi_1<\pi/2$ by assumption, $\{v_i(x^\star)\}_{i\in A}$ generates an acute cone, and so we may select $z_A$ so as to reside in this cone.
To see this, note that the Gram matrix of $\{v_i(x^\star)\}_{i\in A}$ is entrywise positive, and so Perron--Frobenius gives that it has an entrywise positive top eigenvector; multiplying by the matrix $\{v_i(x^\star)\}_{i\in A}$ and normalizing gives a top left singular vector in the interior of the cone that they generate (see the proof of Lemma~17(a) in~\cite{MixonP:22} for additional details).
We also select $z_B$ to have argument in $[0,\pi)$.
Let $\gamma_A$ and $\gamma_B$ denote the arguments of $z_A$ and $z_B$, respectively.
Then we have
\[
\phi_1<\gamma_A<\phi_2,
\qquad
0\leq\gamma_B<\pi.
\]
Since $\{A,B\}$ is the optimal partition of $\{1,\ldots,n\}$ that is feasible for our problem, we may isolate two ways in which it is locally optimal.
First, consider the subset $C\subseteq B$ such that $v_i(x^\star)$ has minimum argument $\psi_1$ for all $i\in C$.
Then an appropriate perturbation $y\in\mathcal{O}(\{z_i\}_{i=1}^n,G)$ of $y^\star$ gives
\[
\sum_{i\in A}|\langle v_i(x^\star),z_A\rangle|^2+\sum_{i\in B}|\langle v_i(x^\star),z_B\rangle|^2
\geq\sum_{i\in A\cup C}|\langle v_i(x^\star),z_A\rangle|^2+\sum_{i\in B\setminus C}|\langle v_i(x^\star),z_B\rangle|^2.
\]
It follows that $\cos^2(\psi_1-\gamma_B)\geq\cos^2(\psi_1-\gamma_A)$, which in turn implies
\[
\min_{k\in\{-1,0,1\}}|\psi_1-\gamma_B-k\pi|
=\min_{k\in\mathbb{Z}}|\psi_1-\gamma_B-k\pi|
\leq\min_{k\in\mathbb{Z}}|\psi_1-\gamma_A-k\pi|
\leq\psi_1-\gamma_A.
\]
Next, consider the subset $D\subseteq B$ such that $v_i(x^\star)$ has maximum argument $\psi_2$ for all $i\in D$.
Then an appropriate perturbation $x\in\mathcal{O}(\{z_i\}_{i=1}^n,G)$ of $x^\star$ gives
\begin{align*}
&\sum_{i\in A}|\langle v_i(x^\star),z_A\rangle|^2+\sum_{i\in B}|\langle v_i(x^\star),z_B\rangle|^2\\
&\qquad\geq\sum_{i\in A\cup D}|\langle v_i(x),z_A\rangle|^2+\sum_{i\in B\setminus D}|\langle v_i(x),z_B\rangle|^2\\
&\qquad=\sum_{i\in A}|\langle v_i(x^\star),z_A\rangle|^2+\sum_{i\in D}|\langle [\begin{smallmatrix}\cos(-\theta)&-\sin(-\theta)\\\sin(-\theta)&\phantom{-}\cos(-\theta)\end{smallmatrix}]v_i(x^\star),z_A\rangle|^2+\sum_{i\in B\setminus D}|\langle v_i(x^\star),z_B\rangle|^2.
\end{align*}
It follows that $\cos^2(\psi_2-\gamma_B)\geq\cos^2(\psi_2-\theta-\gamma_A)$, which in turn implies
\[
\min_{\ell\in\{-1,0,1\}}|\psi_2-\gamma_B-\ell\pi|
=\min_{\ell\in\mathbb{Z}}|\psi_2-\gamma_B-\ell\pi|
\leq\min_{\ell\in\mathbb{Z}}|\psi_2-\theta-\gamma_A-\ell\pi|
\leq-(\psi_2-\theta-\gamma_A).
\]
For each $k,\ell\in\{-1,0,1\}$, one may run Fourier--Motzkin to eliminate $\gamma_B$ from the system
\[
|\psi_1-\gamma_B-k\pi|
\leq\psi_1-\gamma_A,
\qquad
|\psi_2-\gamma_B-\ell\pi|
\leq-(\psi_2-\theta-\gamma_A).
\]
In each case, this either reveals that the above system is infeasible, or that the system implies $\psi_1-\psi_2\leq\theta/2=\pi/3$.
Our intermediate claim follows.\\

\vspace{-0.3cm}
\noindent\textbf{Step 4. The bisector of top left singular vectors lies in between the cones.}

Now that we have both $\phi_2-\phi_1<\pi/2$ and $\psi_2-\psi_1<\pi/2$, then $\{v_i(x^\star)\}_{i\in A}$ and $\{v_i(x^\star)\}_{i\in B}$ both generate acute cones, and so we may select maximizers $z_A$ and $z_B$ of \eqref{eq.top sing vecs} so that the corresponding arguments $\gamma_A$ and $\gamma_B$ satisfy
\[
0
<\phi_1
<\gamma_A
<\phi_2
<\tau^\star
<\psi_1
<\gamma_B
<\psi_2
<\theta.
\]
Since $\{A,B\}$ is the optimal partition of $\{1,\ldots,n\}$, we may again isolate two ways in which it is locally optimal (just as we did above):
\[
-(\psi_1-\gamma_B)
=\min_{k\in\mathbb{Z}}|\psi_1-\gamma_B-k\pi|
\leq\min_{k\in\mathbb{Z}}|\psi_1-\gamma_A-k\pi|
\leq\psi_1-\gamma_A,
\]
\[
\psi_2-\gamma_B
=\min_{\ell\in\mathbb{Z}}|\psi_2-\gamma_B-\ell\pi|
\leq\min_{\ell\in\mathbb{Z}}|\psi_2-\theta-\gamma_A-\ell\pi|
\leq-(\psi_2-\theta-\gamma_A).
\]
The above inequalities are obtained by passing extreme points from $B$ to $A$.
We may also pass extreme points from $A$ to $B$ to obtain additional inequalities:
\[
\phi_2-\gamma_A
=\min_{k\in\mathbb{Z}}|\phi_2-\gamma_A-k\pi|
\leq\min_{k\in\mathbb{Z}}|\phi_2-\gamma_B-k\pi|
\leq-(\phi_2-\gamma_B)
\]
\[
-(\phi_1-\gamma_A)
=\min_{\ell\in\mathbb{Z}}|\phi_1-\gamma_A-\ell\pi|
\leq\min_{\ell\in\mathbb{Z}}|\phi_1+\theta-\gamma_B-\ell\pi|
\leq\phi_1+\theta-\gamma_B.
\]
Rearranging these inequalities then gives
\begin{equation}
\label{eq.fourier motzkin results}
\frac{\gamma_A+\gamma_B}{2}-\frac{\theta}{2}
\leq \phi_1
< \phi_2
\leq \frac{\gamma_A+\gamma_B}{2}
\leq \psi_1
< \psi_2
\leq \frac{\gamma_A+\gamma_B}{2}+\frac{\theta}{2}.
\end{equation}

The inner inequalities imply that the the bisector $z_A+z_B$ of top left singular vectors $z_A$ and $z_B$ lies in between the cones generated by $A$ and $B$. The outer inequalities imply that both cones are contained in the Voronoi cell of that bisector.\\

\vspace{-0.3cm}
\noindent \textbf{Step 5. Appropriate choice of $x$ and $y$.}

Observe that the bisector $x:=z_A+z_B$ has argument $(\gamma_A+\gamma_B)/2$. Let $R$ denote counterclockwise rotation by $\theta/2$ radians, and put $y:=tRx$, where $t:=\langle x,z_B\rangle/\langle Rx,z_B\rangle$.
Note that $t>0$ since argument $\gamma_B$ of $z_B$ is within $\theta/2$ of the arguments of $x$ and $Rx$ by \eqref{eq.fourier motzkin results}. Geometrically speaking, $y$ lies on the boundary of $V_x$ and the of choice of $t$ implies that $y-x$ is orthogonal to $z_B$ and $x-R^{-2}y$ is orthogonal to $z_A$. The latter orthogonality geometrically follows from the first from the fact that $x$ is simultaneously a bisector of $z_A$ and $z_B$ and a bisector of $y$ and $R^{-2}y$.

As such, $x-R^{-2}y$ and $y-x$ are scalar multiples of the bottom eigenvectors of $\sum_{i\in A} v_i(x^\star)v_i(x^\star)^\top$ and $\sum_{i\in B} v_i(x^\star)v_i(x^\star)^\top$, respectively. This is crucially due to the fact that $d=2$.
Note that by \eqref{eq.fourier motzkin results}, $\llangle [z_i],[x]\rrangle=\langle v_i(x^\star),x\rangle$ for all $i\in\{1,\ldots,n\}$ and
\[
\llangle [z_i],[y]\rrangle
=\left\{\begin{array}{cl}
\langle v_i(x^\star),y\rangle&\text{if } i\in B\\
\langle v_i(x^\star),R^{-2}y\rangle&\text{if } i\in A.
\end{array}\right.
\]
Furthermore, since $y\in\operatorname{cone}\{Rx\}$, it holds that $y,R^{-2}y\in \overline V_x$ and so $d([x],[y])=\|x-y\|=\|x-R^{-2}y\|$.
As such,
\begin{align*}
&\|\Phi([x])-\Phi([y])\|^2\\
&\qquad=\sum_{i\in A}\langle v_i(x^\star),x-R^{-2}y\rangle^2+\sum_{i\in B}\langle v_i(x^\star),x-y\rangle^2\\
&\qquad=\lambda_{\min}\bigg(\sum_{i\in A} v_i(x^\star)v_i(x^\star)^\top\bigg)\cdot\|x-R^{-2}y\|^2+\lambda_{\min}\bigg(\sum_{i\in B} v_i(x^\star)v_i(x^\star)^\top\bigg)\cdot\|x-y\|^2\\
&\qquad=\alpha(\{z_i\}_{i=1}^n,G)^2\cdot d([x],[y])^2,
\end{align*}
as desired.

\section{Details for the proof of Theorem~\ref{thm.distortion from random templates}}\label{sec.proof of distortion from random templates}

For~\eqref{eq.random upper bound}, first note that $d\sqrt \lambda \leq d\chi\lambda = n \leq n\log(em)$. We obtain
\[\sqrt d + \sqrt n + \sqrt{2(n\log m + d\sqrt \lambda)} \leq \left(1+1+\sqrt{4}\right)\sqrt{n\log (em)} = 4 \sqrt{n \log (em)}.\] 
For~\eqref{eq.random lower sigma bound}, Theorem~1.3 in~\cite{Wang:18} (with $\mu$ taken to be $t\lambda$) gives that
\[\sigma_{\min}(X) \geq \left(\frac{1}{2C\lambda L^\lambda}\right)^{\frac{1}{\lambda-1}}\left(1-\frac{1}{\lambda}\right)\sqrt{\frac{l}{\lambda}},\]
with probability at least $1-2e^{-tl}$ where $C = 1+\sqrt \lambda + \sqrt{2t\lambda}$ and $L=\sqrt{\frac{e}{\lambda}}e^{t}$. By plugging in the formula for $L$ and rearranging, we obtain
\[\sigma_{\min}(X)\geq \frac{1}{\sqrt e}\left(1-\frac{1}{\lambda}\right)\left(\frac{\lambda^{\lambda/2}\lambda^{\frac{-(\lambda-1)}{2}}}{2C\sqrt{e}\lambda}\right)^{\frac{1}{\lambda-1}}e^{-t\lambda/(\lambda-1)}\sqrt l.\]
To arrive to~\eqref{eq.random lower sigma bound}, it suffices to show $\frac{\lambda^{1/2}}{C}\geq \frac{1}{2+\sqrt 2}\cdot \frac{1}{\sqrt t}$. By plugging in the definition of $C$ and rearranging, this is equivalent to $\frac{1}{\sqrt \lambda} + 1 + \sqrt{2t}\leq (2+\sqrt 2)\sqrt t$ which holds true whenever $\lambda >1$ and $t\geq 1$.

For~\eqref{eq.random lower proba bound I}, the aim is to establish that
\[{n\choose \lceil n/\chi \rceil} \cdot m^{\lceil n/\chi \rceil} \cdot e^{-\lceil n/\chi \rceil t} \leq \exp\left(\left(\frac{n}{\chi}+1\right)\log(e\chi m)-\frac{nt}{\chi}\right).\]
Using the bound ${n\choose \lceil n/\chi \rceil}\leq \left(\frac{n\cdot e}{\lceil n/\chi \rceil}\right)^{\lceil n/\chi \rceil}$ and taking logs on both sides, it suffices to show that
\[\lceil n/\chi \rceil \log\left(\frac{n\cdot e}{\lceil n/\chi \rceil}\right) + \lceil n/\chi \rceil \log(m)-\lceil n/\chi \rceil t\leq\left(\frac{n}{\chi}+1\right)\log(e\chi m)-\frac{nt}{\chi},\]
which holds since $\lceil n/\chi \rceil\leq \frac{n}{\chi}+1$, $\frac{nem}{\lceil n/\chi \rceil} \leq e\chi m$ and $\lceil n/\chi \rceil t \geq \frac{nt}{\chi}$.

Next, to establish~\eqref{eq.random lower proba bound II}, we plug in $t=\left(1+\frac{2}{\sqrt \lambda}\right)\log(e\chi m)$ and $n/\chi = d\lambda$ into \eqref{eq.random lower proba bound I} to aim for establishing
\[(d\lambda +1)\log(e\chi m) - d\lambda\left(1+\frac{2}{\sqrt \lambda}\right)\log(e\chi m) \leq -d\sqrt \lambda.\]
By rearrangement, this is equivalent to $\left(\frac{1}{d\sqrt \lambda }-2\right)\log(e\chi m) \leq -1$ which holds true since $\log(e\chi m)\geq 1$ and $d\sqrt \lambda \geq 1$.

Lastly, we aim to show that~\eqref{eq.random distortion bound} yields the conclusion of the theorem. We aim for an upper bound of
\begin{align*}
H&:= \frac{4\sqrt{n\log (em)}}{\sigma\left(\lceil n/\chi \rceil,\lambda,\left(1+\frac{2}{\sqrt \lambda}\right)\log(e\chi m)\right)}\\
&= \sqrt e \cdot 4\sqrt{n\log (em)}\sqrt{\lceil n/\chi \rceil^{-1}}\cdot \big(2(2 + \sqrt 2)\sqrt e\big)^{\frac{1}{\lambda-1}} \cdot \lambda^{\frac{1}{\lambda-1}}\cdot\\
& \qquad \qquad \frac{\lambda}{\lambda-1}\cdot \big({1+2/\sqrt \lambda}\big)^{\frac{0.5}{\lambda-1}} \log^{0.5}(e\chi m)^{\frac{1}{\lambda-1}}\cdot (e\chi m)^{\left(1+\frac{2}{\sqrt \lambda}\right)\frac{\lambda}{\lambda - 1}}.
\end{align*}

By noting the power in the very last term, we aim for a constant $c(\lambda_0)$ such that $\left(1+\frac{2}{\sqrt \lambda}\right)\frac{\lambda}{\lambda - 1} \leq 1+ c/\sqrt{\lambda}$, which is equivalent to $c\geq 2 + \frac{\sqrt \lambda +2}{\lambda-1}$. Since $\lambda \geq \lambda_0 > 1$ and $\frac{\sqrt \lambda +2}{\lambda-1}$ is decreasing in $\lambda > 1$, we may take $c(\lambda_0) := 2 + \frac{\sqrt \lambda_0 +2}{\lambda_0-1}$. With $\sqrt{n\log (em)}\sqrt{\lceil n/\chi \rceil^{-1}} \leq \sqrt{\chi \log(em)}$, we get
\[H\leq \left[\frac{\lambda^{1+\frac{1}{\lambda-1}}(2(2+\sqrt 2)\sqrt e)^{\frac{1}{\lambda-1}}}{\lambda-1}\cdot ({1+2/\sqrt \lambda})^{\frac{0.5}{\lambda-1}}\cdot 4\sqrt e\right] \cdot \left(e \chi^{3/2}m\log^{1/2}(em)\right)^{1+c/\sqrt \lambda}.\]
Since
\[\left[\frac{\lambda^{1+\frac{1}{\lambda-1}}(2(2+\sqrt 2)\sqrt e)^{\frac{1}{\lambda-1}}}{\lambda-1}\cdot ({1+2/\sqrt \lambda})^{\frac{0.5}{\lambda-1}}\cdot 4\sqrt e\right]^{\left(\left(1+\frac{2}{\sqrt \lambda}\right)\frac{\lambda}{\lambda - 1}\right)^{-1}}\leq 4\sqrt e,\]
we set $C := 4e^{3/2}$ and the result follows.

\begin{thebibliography}{WW}

\bibitem{AlekseevskyKLM:03}
D.\ Alekseevsky, A.\ Kriegl, M.\ Losik, P.\ W.\ Michor,
The Riemannian geometry of orbit spaces.\ The metric, geodesics, and integrable systems,
Publ.\ Math.\ 62 (2003) 247--276.

\bibitem{AlexandrinoB:15}
M.\ M.\ Alexandrino, R.\ G.\ Bettiol,
Lie groups and geometric aspects of isometric actions,
Springer, 2015.

\bibitem{BalanCE:06}
R.\ Balan, P.\ Casazza, D.\ Edidin,
On signal reconstruction without phase,
Appl.\ Comput.\ Harmon.\ Anal.\ 20 (2006) 345--356.

\bibitem{BalanHS:22}
R.\ Balan, N.\ Haghani, M.\ Singh,
Permutation Invariant Representations with Applications to Graph Deep Learning,
arXiv:2203.07546

\bibitem{BalanW:15}
R.\ Balan, Y.\ Wang,
Invertibility and robustness of phaseless reconstruction,
Appl.\ Comput.\ Harmon.\ Anal.\ 38 (2015) 469--488.

\bibitem{BandeiraBKPWW:17}
A.\ S.\ Bandeira, B.\ Blum-Smith, J.\ Kileel, A.\ Perry, J.\ Weed, A.\ S.\ Wein,
Estimation under group actions:\ Recovering orbits from invariants,
arXiv:1712.10163

\bibitem{BandeiraCMN:14}
A.\ S.\ Bandeira, J.\ Cahill, D.\ G.\ Mixon, A.\ A.\ Nelson,
Saving phase:\ Injectivity and stability for phase retrieval,
Appl.\ Comput.\ Harmon.\ Anal.\ 37 (2014) 106--125.

\bibitem{BendoryELS:22}
T.\ Bendory, D.\ Edidin, W.\ Leeb, N.\ Sharon,
Dihedral multi-reference alignment,
IEEE Trans.\ Inform.\ Theory 68 (2022) 3489--3499.

\bibitem{BergCR:84}
C.\ Berg, J.\ P.\ R.\ Christensen, P.\ Ressel,
Harmonic analysis on semigroups:\ Theory of positive definite and related functions,
Springer, 1984.

\bibitem{BochnakCR:13}
J.\ Bochnak, M.\ Coste, M.-F.\ Roy,
Real algebraic geometry,
Springer, 2013.

\bibitem{CahillCD:16}
J.\ Cahill, P.\ Casazza, I.\ Daubechies,
Phase retrieval in infinite-dimensional Hilbert spaces,
Trans.\ Amer.\ Math.\ Soc., Ser.\ B 3 (2016) 63--76.

\bibitem{CahillCC:20}
J.\ Cahill, A.\ Contreras, A.\ Contreras-Hip,
Complete set of translation invariant measurements with Lipschitz bounds,
Appl.\ Comput.\ Harmon.\ Anal.\ 49 (2020) 521--539.

\bibitem{CahillIM:24}
J.\ Cahill, J.\ W.\ Iverson, D.\ G.\ Mixon,
Towards a bilipschitz invariant theory,
arXiv:2305.17241

\bibitem{CahillIMP:22}
J.\ Cahill, J.\ W.\ Iverson, D.\ G.\ Mixon, D.\ Packer,
Group-invariant max filtering,
arXiv:2205.14039

\bibitem{ChenDL:20}
S.\ Chen, E.\ Dobriban, J.\ Lee,
A group-theoretic framework for data augmentation,
NeurIPS 2020, 21321--21333.

\bibitem{ChoiS:05}
M.-J.\ Choi, D.\ Y.\ Shu,
Comparison of semialgebraic groups with Lie groups and algebraic groups,
RIMS Ky\^{o}ky\^{u}roku 1449 (2005) 12--20.

\bibitem{CiresanMGS:10}
D.\ C.\ Cire\c{s}an, U.\ Meier, L.\ M.\ Gambardella, J.\ Schmidhuber,
Deep, big, simple neural nets for handwritten digit recognition,
Neural Comput.\ 22 (2010) 3207--3220.

\bibitem{ConcaEHV:15}
A.\ Conca, D.\ Edidin, M.\ Hering, C.\ Vinzant,
An algebraic characterization of injectivity in phase retrieval,
Appl.\ Comput.\ Harmon.\ Anal.\ 38 (2015) 346--356.

\bibitem{DiazRamos:08}
J.\ C.\ D\'{i}az-Ramos,
Proper isometric actions,
arXiv:0811.0547

\bibitem{Dadok:85}
J.\ Dadok,
Polar coordinates induced by actions of compact Lie groups,
Trans.\ Amer.\ Math.\ Soc.\ 288 (1985) 125--137.

\bibitem{DudekH:94}
E.\ Dudek, K.\ Holly,
Nonlinear orthogonal projection,
Ann.\ Pol.\ Math.\ 59 (1994) 1--31.

\bibitem{DymG:22}
N.\ Dym, S.\ J.\ Gortler,
Low Dimensional Invariant Embeddings for Universal Geometric Learning,
arXiv:2205.02956 (2022).

\bibitem{GordoskiL:16}
C.\ Gorodski, A.\ Lytchak,
Isometric actions on spheres with an orbifold quotient,
Math.\ Ann.\ 365 (2016) 1041--1067.

\bibitem{GordoskiL:14}
C.\ Gorodski, A.\ Lytchak,
On orbit spaces of representations of compact Lie groups,
J.\ f\"{u}r die Reine und Angew.\ Math.\ 2014 (2014) 61--100.

\bibitem{Grove:02}
K.\ Grove,
Geometry of, and via, symmetries,
Univ.\ Lecture Ser.\ 27 (2002) 31--51.

\bibitem{HeinLB:04}
M.\ Hein, T.\ N.\ Lal, O.\ Bousquet,
Hilbertian metrics on probability measures and their application in SVM's,
in:\ Joint Pattern Recognition Symposium,
Springer, 2004, pp.\ 270--277.

\bibitem{HeinosaariMW:13}
T.\ Heinosaari, L.\ Mazzarella, M.\ M.\ Wolf,
Quantum tomography under prior information,
Comm.\ Math.\ Phys.\ 318 (2013) 355--374.

\bibitem{KrizhevskySH:12}
A.\ Krizhevsky, I.\ Sutskever, G.\ Hinton,
ImageNet Classification with Deep Convolutional Neural Networks,
NeurIPS 2012, 1097--1105.

\bibitem{Lee:06}
J.\ M.\ Lee,
Riemannian manifolds:\ An introduction to curvature,
Springer, 2006.

\bibitem{LytchakT:10}
A.\ Lytchak, G.\ Thorbergsson,
Curvature explosion in quotients and applications,
J.\ Differ.\ Geom.\ 85 (2010) 117--140.

\bibitem{Meinrenken:03}
E.\ Meinrenken,
Group actions on manifolds,
\url{http://www.math.toronto.edu/~mein/teaching/LectureNotes/action.pdf}

\bibitem{Michiels:14}
D.\ Michiels,
Orbit type stratification for proper actions,
2014.

\bibitem{Michor:97}
P.\ W.\ Michor,
Isometric actions of Lie groups and invariants,
Lecture notes, 1997.

\bibitem{MixonP:22}
D.\ G.\ Mixon, D.\ Packer,
Max filtering with reflection groups,
arXiv:2212.05104

\bibitem{Olver:19}
P.\ J.\ Olver,
Invariants of finite and discrete group actions via moving frames,
2019.

\bibitem{OnishchikV:12}
A.\ L.\ Onishchik, E.\ B.\ Vinberg,
Lie groups and algebraic groups,
Springer, 2012.

\bibitem{PerryWBRS:19}
A.\ Perry, J.\ Weed, A.\ S.\ Bandeira, P.\ Rigollet, A.\ Singer, 
The sample complexity of multireference alignment,
SIAM J.\ Math.\ Data Science 1 (2019) 497--517.

\bibitem{SimardSP:03}
P.\ Y.\ Simard, D.\ Steinkraus, J.\ C.\ Platt,
Best practices for convolutional neural networks applied to visual document analysis,
ICDAR 2003, 1--6.

\bibitem{Vershynin:12}
R.\ Vershynin,
Introduction to the non-asymptotic analysis of random matrices,
in:\ Compressed Sensing, Theory and Applications,
Cambridge U.\ Press, 2012, pp.\ 210--268.

\bibitem{Vinzant:15}
C.\ Vinzant,
A small frame and a certificate of its injectivity,
SampTA 2015, 197--200.

\bibitem{Wang:18}
Y.\ Wang,
Random matrices and erasure robust frames,
J.\ Fourier Anal.\ Appl.\ 24 (2018) 1--16.

\bibitem{Weyl:52}
H.\ Weyl,
Symmetry,
Princeton U.\ Press, 1952.

\end{thebibliography}
\end{document}